\newcommand{\R}{\mathbb{R}}
\newcommand{\N}{\mathbb{N}}
\newcommand{\C}{\mathbb{C}}
\newcommand{\cV}{\mathcal{V}}
\newcommand{\cA}{\mathcal{A}}
\newcommand{\cR}{\mathcal{R}}
\newcommand{\cE}{\mathcal{E}}
\newcommand{\cQ}{\mathcal{Q}}
\newcommand{\eps}{\varepsilon}
\newcommand{\REMARK}[1]{\marginpar{\tt #1}}
\newtheorem*{acknowledgements}{Acknowledgements}
\newcommand{\cstu}{\mathrm{C}^*_u}
\newtheorem*{rigprob*}{Rigidity Problem for uniform Roe Algebras}
\newtheorem*{rigprobcorona*}{Rigidity Problem for uniform Roe Coronas}
\newcommand{\Cstar}{\mathrm{C}^*}
\newcommand{\cst}{\mathrm{C}^*}
\newcommand{\cstar}{$\mathrm{C}^*$}
\newcommand{\cU}{\mathcal{U}}
\newcommand{\cZ}{\mathcal{Z}}
\newcommand{\cF}{\mathcal{F}}
\newcommand{\cP}{\mathcal{P}}
\newcommand{\cB}{\mathcal{B}}
\newcommand{\cK}{\mathcal{K}}
\newtheorem{theorem}{Theorem}[section]
\newtheorem*{theorem*}{Theorem}
\newtheorem{proposition}[theorem]{Proposition}
\newtheorem{problem}[theorem]{Problem}
\newtheorem*{proposition*}{Proposition}
\newtheorem{lemma}[theorem]{Lemma}
\newtheorem*{lemma*}{Lemma}
\newtheorem{corollary}[theorem]{Corollary}
\newtheorem*{corollary*}{Corollar}
\newtheorem*{fact*}{Fact}
\theoremstyle{definition}
\newtheorem{definition}[theorem]{Definition}
\newtheorem*{definition*}{Definition}
\newtheorem{claim}[theorem]{Claim}
\newtheorem*{claim*}{Claim}
\newtheorem*{conjecture*}{Conjecture}
\theoremstyle{remark}
\newtheorem{example}[theorem]{Example}
\newtheorem*{example*}{Example}
\newtheorem{remark}[theorem]{Remark}
\newtheorem*{remark*}{Remark}
\newtheorem*{note*}{Note}
\newtheorem*{question*}{Question}
\newcommand{\norm}[1]{\left\lVert #1 \right\rVert}
\DeclareMathOperator{\Span}{span}
\DeclareMathOperator{\supp}{supp}
\DeclareMathOperator{\diam}{diam}
\DeclareMathOperator{\Proj}{Pr}
\newcommand{\cM}{\mathcal M} 
\newcommand{\cN}{\mathcal N}
\newcounter{my_enumerate_counter}
\newcommand{\pushcounter}{\setcounter{my_enumerate_counter}{\value{enumi}}}
\newcommand{\popcounter}{\setcounter{enumi}{\value{my_enumerate_counter}}}
\title{A quantization of coarse spaces and uniform Roe algebras}
\author[B. M. Braga]{Bruno M. Braga}
\address[B. M. Braga]{IMPA, Estrada Dona Castorina 110, 22460-320, Rio de Janeiro,
Brazil}
\email{demendoncabraga@gmail.com}
\thanks{B. M. Braga and D. Sherman were partially supported by NSF Grant   DMS 2054860. B. M. Braga  was partially supported by FAPERJ (Proc. E-26/200.167/2023) and by CNPq (Proc. 303571/2022-5)}
\author[J. Eisner]{Joseph Eisner}
\address[J. Eisner]{}
\email{josepheisner54@gmail.com}
\author[D. Sherman]{David Sherman}
\address[D. Sherman]{University of Virginia, $141$ Cabell Drive, Kerchof Hall, P.O. Box $400137$, Charlottesville, VA $22904$-$4137$,  USA } \email{dsherman@virginia.edu}
\begin{document}

 \begin{abstract}
  We propose a quantization of coarse spaces and uniform Roe algebras.  The objects are based on the quantum relations introduced by N. Weaver  and require the choice of a represented von Neumann algebra.  In the case of the diagonal inclusion $\ell_\infty(X) \subset  \cB(\ell_2(X))$, they reduce to the usual constructions.  Quantum metric spaces furnish natural examples parallel to the classical setting, but we provide other examples that are not inspired by metric considerations, including the new class of support expansion \cstar-algebras.  We also work out the basic theory for maps between quantum coarse spaces and their consequences for quantum uniform Roe algebras. 
 \end{abstract}

\maketitle


\section{Introduction}\label{SectionIntro}

 Roe-type algebras, also known as translation invariant algebras \cite{RoeBook}, are operator algebras built out of metric or more general coarse spaces.  Originally introduced by  J. Roe   \cite{Roe1988}    to obtain index theorems for elliptic operators on non-compact Riemannian manifolds, they have since found applications in many directions, from the  Baum-Connes and Novikov conjectures \cite{Yu2000} to topological insulators \cite{EwertMeyer2019}.  Based on a wave of recent work, we now know that in many situations these algebras are complete invariants for the large-scale, or \emph{coarse}, geometry of the underlying spaces (e.g., \cite{BBrigidity,MartinezVigolo2025}).  In this article we point out that the entire framework implicitly relies on the simplest von Neumann algebras, those of the form $\ell_\infty(X)$, and we generalize to arbitrary von Neumann algebras.  This opens up a new realm of quantum coarse spaces and their associated quantum uniform Roe algebras.  Here we follow standard usage of the adjective ``quantum" (much repeated throughout the paper) as an interpretation of structures connected to Hilbert spaces as noncommutative versions of classical counterparts \cite{Connes1994NoncommutativeGeo,WeaverBook2001Quantum}. The foundation for our approach is N. Weaver's quantization of relations on a set \cite{Weaver2012MemoirAMS}, as well as G. Kuperberg  and N. Weaver's quantum approach to metric spaces   \cite{KuperbergWeaver2012MemAMS}.  

Let us review a few concepts from coarse geometry.  A \emph{coarse space} consists of a set $X$ together with a collection $\cE$ of relations which, by satisfying certain axioms, gives a notion of boundedness.  The prototype is a metric space $(X,d)$, with $\cE$ comprising all subsets of the sets $\{(x,y) \in X \times X \mid \: d(x,y) \leq r\}$; the appropriate well-behaved maps in this setting are those for which the expansion of distances is controlled at the large scale.  Importantly, though, the axioms for coarse spaces 
allow for non-metric examples.  In fact coarse spaces are conceptually analogous to the much older framework of \emph{uniform spaces} \cite[Chapter 8]{Engelking}, in which uniform continuity (controlled expansion of distances at the small scale) is generalized beyond the metric setting.  Given a coarse space $(X, \cE)$, one may construct its \emph{uniform Roe algebra} $\cstu(X, \cE)$ as follows.  For any relation $E$ on $X$, i.e., $E \subset  X \times X$, we say that an operator $a \in \cB(\ell_2(X))$ with matrix form $[a_{xy}]_{x,y\in X}$ is \emph{$E$-controlled} if $\{(x,y)\in X\times X: \,  a_{xy}\neq 0\} \subset  E$.  Then $\cstu(X, \cE)$ is the unital \cstar-subalgebra of $\cB(\ell_2(X))$ obtained as the norm closure of all operators controlled by a member of $\cE$.

Our initial motivation is to propose and investigate analogues of the preceding paragraph in which relations on a set are replaced with quantum relations.  Classical uniform Roe algebras are already noncommutative and tied to Hilbert space, so how can they be quantized?  The answer is that quantum relations rely on an underlying represented von Neumann algebra, and quantum relations on the diagonal abelian subalgebra $\ell_\infty(X) \subset \cB(\ell_2(X))$ are in one-to-one correspondence with classical relations on $X$.  Here we define a \emph{quantum coarse space}, which features an appropriate collection of quantum relations, and we explain how to produce its \emph{quantum uniform Roe algebra}.  In case the underlying von Neumann algebra is $\ell_\infty(X) \subset \cB(\ell_2(X))$, this recovers the classical theory, but there is no need for it to be commutative or atomic.  And just as in the classical setting, one gets examples from the canonical quantum coarse structure of a quantum metric space \cite{KuperbergWeaver2012MemAMS}, but also as before there are non-metric examples of interest.  We develop here a new class of \cstar-algebras, \emph{support expansion \cstar-algebras}, that arise as (non-metric) quantum uniform Roe algebras.  We also put in due diligence to show that many basic concepts and facts for maps between coarse spaces have satisfactory quantum analogues.  


\smallskip

We now give an overview of the paper.  Let $X$ be an arbitrary set, and fix a Hilbert space $H$ and a von Neumann algebra  $\cM\subset  \cB(H)$.

In Section 2 we review the basic theory of quantum relations developed by N. Weaver.  A \emph{quantum relation on $\cM \subset  \cB(H)$} is  a weak$^*$-closed subspace $\cV\subset  \cB(H)$ which is also a bimodule over $\cM'$, i.e., $\cM'\cV\cM'\subset  \cV$ (see Definition \ref{DefiQuantumRelOriginal}). Such an object can also be encoded by a family of pairs of projections from $\cM\bar\otimes \cB(\ell_2)$ satisfying certain axioms; in this form, for which no representation of $\cM$ is required, it is called an \emph{intrinsic quantum relation on $\cM$} (see Definition \ref{DefiQuantumRel}).  In the case of a diagonal inclusion $\ell_\infty(X) \subset \cB(\ell_2(X))$, a third way of encoding the same information is as a subset $E \subset  X \times X$ (\cite[Proposition 2.2]{Weaver2012MemoirAMS} and Proposition \ref{Weaveratomic}), i.e., a relation.  The quantum relation corresponding to $E$ is nothing but the set of $E$-controlled operators.

In  Section \ref{SectionQCoarseSp} we define a \emph{quantum coarse structure on $\cM \subset  \cB(H)$} as a family $\mathscr{V}$ of quantum relations on $\cM \subset  \cB(H)$ satisfying some properties which mimic the standard axioms of coarse spaces  (Definition \ref{DefiQuantumCoarseSpaceOriginal}) and, analogously, we define  an  \emph{intrinsic quantum coarse structure on $\cM$}  as a family $\mathscr{R}$ of intrinsic quantum relations  on $\cM $ satisfying some similar  properties (Definition \ref{DefiQuantumCoarseSpace}).  The pairs $(\cM \subset  \cB(H), \mathscr{V})$ and $(\cM,\mathscr{R})$ are then called a \emph{quantum coarse space} and an \emph{intrinsic quantum coarse space}, respectively. These  notions are canonically equivalent to each other (Corollary \ref{CorQRelAndIQRel}) and coincide in a canonical way  with classical coarse structures in the commutative counting measure case $\cM=\ell_\infty(X)$.   We also show that, just as in classical coarse geometry,  a quantum coarse structure is \emph{(quantum) metrizable} if and only if it is  countably generated (Proposition \ref{PropMetrizable}).   Immediate examples of quantum coarse spaces come from quantum graphs and, more generally, quantum metric spaces \cite{KuperbergWeaver2012MemAMS}.

Section \ref{SectionQURA} introduces \emph{quantum uniform Roe algebras} (Definition \ref{DefiQuantumURoeAlgGEN}), which are simply the unital \cstar-algebras that arise as the closed union of all the quantum relations in a quantum coarse space. 
In the case $\ell_\infty(X) \subset \cB(\ell_2(X))$, the reader will notice that this coincides with the usual uniform Roe algebra.  Section \ref{basicprop} describes quantum versions of connectedness and triviality (for which we amusingly find three distinct levels), and Section \ref{examples} presents some basic examples.
\emph{Intrinsic} quantum relations often allow for a   more intuitive approach to quantum large scale geometry, but  
the construction of a quantum uniform Roe algebra requires that $\cM$ be represented.  This does not muddy the waters too much, since the representation theory of von Neumann algebras is simple.  Given an intrinsic quantum coarse structure on $\cM$, the associated quantum uniform Roe algebra is determined up to a ``change of representation" equivalence relation (Section \ref{subsectioniqura}; see also Theorem \ref{ThmEmbAndIso}(2)).

Section \ref{SubsectionDimFunc} presents a new class of examples that illustrate the flexibility of our definitions and invite further study.  The prototype of a classical coarse structure arises from a metric; the uniform Roe algebra is the closure of the operators that do not change the support of a vector too much in terms of \emph{displacement}.  Analogously: if $\cM \subset  \cB(H)$ is equipped with a faithful normal semifinite trace $\tau$, we construct in Section \ref{SubsectionExamDimFun} a \cstar-algebra as the closure of the operators in $\cB(H)$ that do not change the size of subspaces affiliated with $\cM$ too much in terms of \emph{measure}, where the size of a subspace is measured by applying the trace to the associated projection in $\cM$.  The explicit condition on $a \in \cB(H)$ is the existence of $\lambda \geq 0$ such that for all projections $q \in \cM$,
\[\tau(s_\ell^\cM(a q)), \: \tau(s_\ell^\cM(a^*q)) \leq \lambda \cdot \tau(q).\]
(Here $s_\ell^\cM(\cdot)$ denotes the \emph{left $\cM$-support}, the smallest projection in $\cM$ fixing the range of the operator.)  We call this \cstar-algebra a \emph{support expansion \cstar-algebra} and show in Theorem \ref{thmprojsuppexp} that it is a quantum uniform Roe algebra.

In Section \ref{SubsectionAbelianMeasDistorsion} we give a vector-based version of this construction.  It is in some ways simpler and produces the same \cstar-algebra when $\cM$ is abelian (Theorem \ref{abelianexpansion}), but in general a \emph{vector support expansion \cstar-algebra} need not be a quantum uniform Roe algebra at all (Theorem \ref{notqura}).  In our companion paper \cite{EisnerThesis} we study the wild jungle of (vector) support expansion \cstar-algebras arising when $\cM$ is restricted to be abelian but the constraint functions $f$ --- meaning $\tau(s_\ell^\cM(aq)) \leq f(\tau(q))$ --- are not necessarily linear.  Subsection \ref{SubsectionII1Factor} makes the observation that all *-isomorphisms between support expansion \cstar-algebras associated to $\mathrm{II}_1$-factors without property $\Gamma$ are spatially implemented (Proposition \ref{ThmII1notGammaIsoUnitaryImpl}), and we explain how this could be a step toward rigidity-type results for quantum uniform Roe algebras that may be pursued elsewhere.

Section \ref{SectionQuantumCoarseEq} deals with morphisms,  equivalences, and embeddings  between quantum coarse spaces --- here intrinsic quantum relations provide a more suitable framework. We proceed by imposing various conditions on unital weak$^*$-continuous $^*$-homomorphisms $\cM\to \cN$ (``quantum functions") so that they interact appropriately with intrinsic quantum coarse structures.  
This leads to quantum versions for the following terms from coarse geometry: coarse function, coarse isomorphism, close functions, coarse equivalence, coarse subspace, coarse embedding, expanding, cobounded.  Of course, we are especially interested in their consequences for quantum uniform Roe algebras. Looking back to the classical scenario once again, we know that if there is an injective coarse map $f:X\to Y$, then there is a canonical embedding $\cstu(X)\hookrightarrow \cstu(Y)$. If  $f$ is furthermore a coarse embedding, then the image of the embedding $\cstu(X)\hookrightarrow  \cstu(Y)$ is a hereditary subalgebra of $\cstu(Y)$; if $f$ is a bijective coarse equivalence, the embedding $\cstu(X)\hookrightarrow  \cstu(Y)$ becomes an isomorphism. These results have quantum analogs, and we prove them in Theorem \ref{ThmEmbAndIso}(2), Theorem \ref{ThmEmbAndIsoF}, and Theorem \ref{ThmEmbAndIsoHERE}.

Section \ref{SectionMetricModuli} recasts some of our definitions in terms of natural moduli for quantum functions between quantum metric spaces (Propositions \ref{PropModulusUniformCont} and \ref{PropModulusExp}). In Section \ref{SubsectionAsyDim} we define the asymptotic dimension of a (not necessarily metrizable) quantum coarse space and show that, as in the classical setting, this notion  is stable under quantum coarse embedding (Theorem \ref{ThmAsyDim}).

\smallskip

This paper is mostly an attempt to lay the groundwork for quantum coarse geometry (although we believe support expansion \cstar-algebras are natural and independently interesting operator algebras).  We are hopeful that many compelling examples and phenomena are yet to be discovered.

\section{Preliminaries: quantum relations}\label{SectionPrelim}
\subsection{Basic notation}
Given a Hilbert space $H$, we denote the \cstar-algebra of all bounded operators on $H$ by $\cB(H)$ and the ideal of compact operators on $H$ by $\cK(H)$.     We denote the identity element by $1$, occasionally with a subscript to indicate the scope, e.g., $1_H$.  Given a \cstar-algebra $\cA$, we denote the set of its projections by $\mathrm{Pr}(\cA)$ and the subset of its nonzero projections by $\mathrm{Pr}_*(\cA)$.  

A measure  space $(X,\mu)$ is called \emph{finitely decomposable} if $X$ has a  partition  into finite measure subsets, say $X=\bigsqcup_{\lambda\in \Lambda} X_\lambda$, so that  $A\subset  X$ is measurable if and only if each $A\cap X_\lambda$ is measurable, and in this case $\mu(A)=\sum_{\lambda\in \Lambda}\mu(A\cap X_\lambda)$.  We frequently identify $f \in L_\infty(X,\mu)$ with the corresponding operator of multiplication by $f$, so that $L_\infty(X,\mu)\subset  \cB(L_2(X,\mu))$. Given a measurable $A\subset  X$, $\chi_A$ denotes the characteristic function of $A$, which is a projection in $\cB(L_2(X,\mu))$.  In case $\mu$ is the counting measure on $X$, we denote the standard unit basis of $\ell_2(X)$ by $(\delta_x)_{x\in X}$ and, given $x,y\in X$,  $e_{xy}$ denotes the rank 1 partial isometry on $\ell_2(X)$ which takes $\delta_y$ to $\delta_x$; so, $\chi_{\{x\}}=e_{xx}$.

For a represented von Neumann algebra $\cM \subset  \cB(H)$, we define the \textit{$\cM$-support} of a vector $\xi \in H$ as the smallest projection in $\cM$ fixing $\xi$, denoted $s^\cM(\xi)$.  For $a \in \cB(H)$ its \textit{left $\cM$-support}, written $s^\cM_\ell(a)$, is the smallest projection $q\in \cM$ with $qa=a$.  This generalizes the range projection ($\cM = \cB(H)$) and is equal to ${\vee}_{\xi \in H} s^\cM(a\xi)$.  Here are some elementary properties we will use in the sequel, where the sums are convergent in any sense:
\begin{equation} \label{E:suppproperties}
s^\cM\left(\sum \xi_j\right) \leq \bigvee s^\cM(\xi_j); \qquad s^\cM_\ell \left(\sum a_j\right) \leq \bigvee s^\cM_\ell(a_j).
\end{equation}

 \subsection{Quantum relations}
We recall N. Weaver's  \emph{quantum relations}:

\begin{definition}\label{DefiQuantumRelOriginal}
Let $\cM\subset  \cB(H)$ be a von Neumann algebra. A weak$^*$-closed $\cM'$--$\cM'$ bimodule $\cV\subset  \cB(H)$ is called a \emph{quantum relation on $\cM$}. We denote the set of all quantum relations on $\cM \subset  \cB(H)$ by $\mathrm{QRel}(\cM \subset  \cB(H))$.
\end{definition}

The next proposition justifies why the objects introduced in Definition \ref{DefiQuantumRelOriginal} deserve to be called quantum relations: for a set $X$, quantum relations on the diagonal $\ell_\infty(X) \subset  \cB(\ell_2(X))$ correspond canonically to relations on $X$.

\begin{proposition} \label{Weaveratomic}\emph{(}\cite[Proposition 2.2]{Weaver2012MemoirAMS}\emph{).}
Let $X$ be a set and consider the von Neumann algebra $\ell_\infty(X)\subset  \cB(\ell_2(X))$. If $E$ is a relation on $X$, then 
\[\cV_E=\Big\{a\in \cB(\ell_2(X)): \,   (x,y)\not\in E\text{ implies } \chi_{\{x\}} a\chi_{\{y\}}=0\Big\}\]
is a quantum relation on $\ell_\infty(X)$.  If $\cV$ is  a quantum relation on $\ell_\infty(X)$, then 
\[E_\cV=\Big\{(x,y)\in X^2: \,  \exists a\in \cV\text{ for which }\chi_{\{x\}} a\chi_{\{y\}}\neq 0\Big\}\]
is a relation on $X$. These constructions are inverse to each other. \label{PropositionWeaverRelation}
\end{proposition}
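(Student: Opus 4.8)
The plan is to exploit the fact that $\ell_\infty(X)$ is a maximal abelian subalgebra of $\cB(\ell_2(X))$, so that $\ell_\infty(X)' = \ell_\infty(X)$ and the bimodule condition defining a quantum relation is a condition over $\ell_\infty(X)$ itself. The minimal projections of $\ell_\infty(X)$ are exactly the $\chi_{\{x\}} = e_{xx}$, and for any $a \in \cB(\ell_2(X))$ one computes $\chi_{\{x\}} a \chi_{\{y\}} = a_{xy} e_{xy}$, where $a_{xy} = \langle a \delta_y, \delta_x \rangle$ is the matrix entry. Thus the defining condition for $\cV_E$ is simply that $a_{xy} = 0$ whenever $(x,y) \notin E$; in other words $\cV_E$ is the set of $E$-controlled operators. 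I will use this matrix-entry dictionary throughout.

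For the first assertion I would verify the three defining properties of a quantum relation. The set $\cV_E$ is plainly a linear subspace, since the matrix entry functionals are linear. It is an $\ell_\infty(X)$-bimodule because for $f, g \in \ell_\infty(X)$ the product $fag$ has matrix entries $f(x) a_{xy} g(y)$, which vanish wherever $a_{xy}$ does; hence $f a g \in \cV_E$. Finally it is weak$^*$-closed: each functional $a \mapsto a_{xy} = \langle a\delta_y, \delta_x\rangle$ is weak$^*$-continuous, being given by a rank-one, hence trace-class, operator, so the vanishing of $a_{xy}$ off $E$ is preserved under weak$^*$-limits. The second assertion is immediate, as $E_\cV$ is by construction a subset of $X^2$, which is all that ``relation'' requires.

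For the inverse claims, the equality $E_{\cV_E} = E$ follows from the observation that $e_{xy} \in \cV_E$ precisely when $(x,y) \in E$: if $(x,y) \in E$ then $e_{xy}$ witnesses $(x,y) \in E_{\cV_E}$ since $\chi_{\{x\}} e_{xy} \chi_{\{y\}} = e_{xy} \neq 0$, while if $(x,y) \notin E$ then $\chi_{\{x\}} a \chi_{\{y\}} = a_{xy} e_{xy} = 0$ for every $a \in \cV_E$. For $\cV_{E_\cV} = \cV$, the inclusion $\cV \subset \cV_{E_\cV}$ is immediate from the definition of $E_\cV$: if $a \in \cV$ and $(x,y) \notin E_\cV$, then no element of $\cV$ has a nonzero $(x,y)$-entry, so in particular $a_{xy} = 0$.

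The reverse inclusion $\cV_{E_\cV} \subset \cV$ is the crux of the argument, and I expect the weak$^*$-approximation here to be the main obstacle. The idea is first to produce the individual matrix units inside $\cV$: for each $(x,y) \in E_\cV$ there is, by definition, some $b \in \cV$ with $\chi_{\{x\}} b \chi_{\{y\}} = b_{xy} e_{xy} \neq 0$; since $\cV$ is an $\ell_\infty(X)$-bimodule, $\chi_{\{x\}} b \chi_{\{y\}} \in \cV$, and dividing by the scalar $b_{xy}$ gives $e_{xy} \in \cV$. Now given $a \in \cV_{E_\cV}$, for each finite $F \subset X$ the compression $\chi_F a \chi_F = \sum_{x,y \in F} a_{xy} e_{xy}$ is a finite linear combination of matrix units $e_{xy}$ with $(x,y) \in E_\cV$ (the remaining entries vanish), hence lies in $\cV$. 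It remains to check that $\chi_F a \chi_F \to a$ in the weak$^*$ topology as $F$ increases through the finite subsets of $X$: the net is bounded by $\|a\|$, and the inner-product computation $\langle \chi_F a \chi_F \xi, \eta\rangle = \langle a \chi_F \xi, \chi_F \eta\rangle \to \langle a\xi, \eta\rangle$ shows convergence in the weak operator topology, which agrees with the weak$^*$ topology on bounded sets. Since $\cV$ is weak$^*$-closed, $a \in \cV$, completing the proof. The one point demanding care is precisely this identification of weak$^*$ with weak-operator convergence on the bounded net $\chi_F a \chi_F$, together with the use of the bimodule property to extract the matrix units $e_{xy}$ one at a time.
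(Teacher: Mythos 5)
Your proof is correct. Note that the paper itself offers no proof of this statement --- it is quoted verbatim from Weaver's memoir (Proposition 2.2 there) --- so there is no in-paper argument to compare against; your argument is essentially the standard one from that source. All the key points are handled properly: the matrix-entry dictionary $\chi_{\{x\}}a\chi_{\{y\}}=a_{xy}e_{xy}$, weak$^*$-closedness of $\cV_E$ via weak$^*$-continuity of the entry functionals, extraction of the matrix units $e_{xy}$ from $\cV$ via the bimodule property, and the approximation $\chi_F a\chi_F\to a$ using boundedness of the net together with the coincidence of the weak$^*$ and weak operator topologies on bounded sets.
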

In other words, $E$ is the collection of matrix entries where elements of $\cV_E$ are allowed to be nonzero.  We say that operators in  $\cV_E$  are   \textit{controlled by $E$} or have \emph{support controlled by $E$}.

N. Weaver also gave an ``intrinsic" approach in which a quantum relation on $\cM \subset  \cB(H)$ corresponds to a family of pairs of projections in $\cM \bar \otimes \cB(\ell_2)$.  This description does not require that $\cM$ act on a Hilbert space.

\begin{definition}\label{DefiQuantumRel}
Let  $\cM$ be a von Neumann algebra and consider $\cP=\Proj(\cM\bar{\otimes}\cB(\ell_2))$ endowed with the restriction of the weak operator topology.  An   open subset $\cR\subset \cP\times \cP$  is called an \emph{intrinsic quantum relation on $\cM$} if the following hold.
\begin{enumerate}
\item\label{ItemDefiQuantumRel1} $(0,0)\not\in \cR$.
\item\label{ItemDefiQuantumRel2} Given  families of nonzero projections $(p_i)_{i\in I}$ and $(q_j)_{j\in J}$ in $\cP$, we have  
\[\Big(\bigvee_{i\in I} p_i, \bigvee_{j\in J} q_j\Big)\in \cR\ \iff \ \exists (i,j)\in I\times J\ \text{ with }\ (p_i,q_j)\in \cR.\]
\item \label{ItemDefiQuantumRel3} For all projections $p,q\in \cP$ and all $b\in 1_{\cM}\otimes \cB(\ell_2)$, we have 
\[(p,[bq])\in \cR\ \iff\ ([b^*p],q)\in \cR.\]
Here square brackets
denote range projection.
\end{enumerate}
  We denote the set of all  intrinsic quantum relations on $\cM$  by $\mathrm{IQRel}(\cM)$.
\end{definition}
 The correspondence between $\mathrm{IQRel}(\cM)$ and $\mathrm{QRel}(\cM \subset  \cB(H))$ is described in the following theorem.  Informally, in an associated intrinsic quantum relation the pairs $(p,q)$ describe corners where arrays for amplifications of operators in the quantum relation are sometimes nonzero.

 \begin{theorem}\emph{(}\cite[Theorem 2.32]{Weaver2012MemoirAMS}\emph{).}
 Let $\cM\subset  \cB(H)$ be a von Neumann algebra and $\cP=\Proj(\cM\bar\otimes \cB(\ell_2))$. If $\cV$ is a quantum relation on $\cM$, then 
 \[\cR_\cV=\Big\{(p,q)\in \cP^2: \,  \exists a\in \cV\text{ with }p(a\otimes 1)q\neq 0\Big\}\] 
 is an intrinsic quantum relation on $\cM$. If $\cR$ is an intrinsic quantum relation on $\cM$, then 
 \[\cV_\cR=\Big\{a\in \cB(H): \,  (p,q)\not\in \cR\Rightarrow p(a\otimes 1)q=0\Big\}\]
 is a quantum relation on $\cM \subset  \cB(H)$. These  constructions are inverse to each other. \label{ThmWeaverQRelAndIntQRel}
 \end{theorem}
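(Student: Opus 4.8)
The plan is to treat the two constructions separately, verify that each lands in the asserted class, and then prove that they are mutually inverse. Throughout I will lean on two structural facts. First, $a\mapsto a\otimes 1$ is weak$^*$-continuous from $\cB(H)$ into $\cB(H\otimes\ell_2)$, and $(\cM\bar\otimes\cB(\ell_2))'=\cM'\otimes 1$; consequently, for $c\in\cM'$ the operator $c\otimes 1$ commutes with every $p,q\in\cP$ and with every $a\otimes 1$, while for $b\in 1_\cM\otimes\cB(\ell_2)$ the operator $b$ commutes with every $a\otimes 1$. Second, every normal functional on $\cB(H)$ has the form $x\mapsto\langle(x\otimes 1)\xi,\eta\rangle$ for suitable $\xi,\eta\in H\otimes\ell_2$ — this is precisely what the amplification by $\cB(\ell_2)$ is for, and it will drive the reconstruction step.

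First I would check that $\cV_\cR$ is a quantum relation. For fixed $p,q\in\cP$ the map $a\mapsto p(a\otimes 1)q$ is weak$^*$-continuous, so $\{a: p(a\otimes 1)q=0\}$ is a weak$^*$-closed subspace, and $\cV_\cR$ is the intersection of these over $(p,q)\notin\cR$, hence weak$^*$-closed. For the bimodule property, given $a\in\cV_\cR$ and $c,d\in\cM'$, I write $(cad)\otimes 1=(c\otimes 1)(a\otimes 1)(d\otimes 1)$ and push $c\otimes 1,d\otimes 1$ past $p,q$ (they commute with $\cP$), so that $p((cad)\otimes 1)q=(c\otimes 1)\,p(a\otimes 1)q\,(d\otimes 1)=0$ whenever $(p,q)\notin\cR$; thus $cad\in\cV_\cR$.

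Next I would verify that $\cR_\cV$ is an intrinsic quantum relation. Axiom \eqref{ItemDefiQuantumRel1} is immediate. For Axiom \eqref{ItemDefiQuantumRel2} the key lemma is that, for a fixed operator $T=a\otimes 1$, one has $p_iTq_j=0$ for all $i,j$ if and only if $(\bigvee_i p_i)T(\bigvee_j q_j)=0$: indeed $p_iTq_j=0$ for all $i$ says $\ran(Tq_j)\perp\ran(p_i)$ for all $i$, hence $\perp\ran(\bigvee_i p_i)$, and a symmetric pass in the second variable (after taking adjoints) finishes it. Feeding this through the definition of $\cR_\cV$, quantified over $a\in\cV$, gives both implications. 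For Axiom \eqref{ItemDefiQuantumRel3} I use that $b\in 1_\cM\otimes\cB(\ell_2)$ commutes with $a\otimes 1$, together with the range-projection identities $x[bq]=0\iff xbq=0$ and $[b^*p]y=0\iff pby=0$; both sides of the asserted equivalence then reduce to the single condition $pb(a\otimes 1)q=0$, and quantifying over $a\in\cV$ yields $(p,[bq])\in\cR_\cV\iff([b^*p],q)\in\cR_\cV$. Finally, for openness: on projections the weak operator topology coincides with the strong one (if $p_\alpha\to p$ in WOT with all terms projections, then $\|p_\alpha\xi\|^2=\langle p_\alpha\xi,\xi\rangle\to\|p\xi\|^2$, upgrading weak to norm convergence), and multiplication is jointly SOT-continuous on bounded sets, so for each $a\in\cV$ the set $\{(p,q): p(a\otimes 1)q=0\}$ is closed; intersecting over $a\in\cV$ shows the complement of $\cR_\cV$ is closed.

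It remains to prove the constructions are inverse. The inclusions $\cV\subseteq\cV_{\cR_\cV}$ and $\cR_{\cV_\cR}\subseteq\cR$ are immediate from the definitions. For $\cV_{\cR_\cV}\subseteq\cV$ I argue by contrapositive: if $a_0\notin\cV$, Hahn--Banach in the weak$^*$ topology produces a normal functional vanishing on $\cV$ but not at $a_0$, which I write as $\omega(x)=\langle(x\otimes 1)\xi,\eta\rangle$; setting $q=[\overline{(\cM'\otimes 1)\xi}]$ and $p=[\overline{(\cM'\otimes 1)\eta}]$ — these lie in $(\cM'\otimes 1)'=\cM\bar\otimes\cB(\ell_2)$, hence in $\cP$ — the bimodule property gives $\langle(b\otimes 1)(d\otimes 1)\xi,(c\otimes 1)\eta\rangle=\omega(c^*bd)=0$ for all $b\in\cV$ and $c,d\in\cM'$, so $p(b\otimes 1)q=0$ and $(p,q)\notin\cR_\cV$, while $q\xi=\xi$ and $p\eta=\eta$ force $p(a_0\otimes 1)q\neq 0$, so $a_0\notin\cV_{\cR_\cV}$. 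The genuine obstacle is the opposite inclusion $\cR\subseteq\cR_{\cV_\cR}$. Using Axiom \eqref{ItemDefiQuantumRel2} I would reduce to cyclic $p_0=[\overline{(\cM'\otimes 1)\xi_0}]$ and $q_0=[\overline{(\cM'\otimes 1)\eta_0}]$ (every projection in $\cP$ is the join of the cyclic projections below it); the task is then to manufacture, for each such $(p_0,q_0)\in\cR$, an operator $a\in\cV_\cR$ with $\langle(a\otimes 1)\eta_0,\xi_0\rangle\neq 0$, i.e.\ to show $\cR\mapsto\cV_\cR$ is injective. This is where the full force of the axioms is needed: openness gives room around $(p_0,q_0)$, Axiom \eqref{ItemDefiQuantumRel3} encodes the freedom to redistribute mass along the $\ell_2$-direction via $1_\cM\otimes\cB(\ell_2)$, and together they should let one build an operator in $\cV_\cR$ detecting exactly the pair $(p_0,q_0)$ while annihilating all pairs outside $\cR$. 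I expect this construction to be the main difficulty; everything else is bookkeeping around the two structural facts highlighted at the outset.
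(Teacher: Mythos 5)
The paper does not actually prove this statement: it is quoted verbatim from Weaver's memoir (Theorem 2.32 there), so there is no internal proof to compare against. Measured on its own terms, your proposal gets the routine parts right --- the verification that $\cV_\cR$ is a weak$^*$-closed $\cM'$-bimodule, the check of axioms (1)--(3) and openness for $\cR_\cV$, and the Hahn--Banach argument for $\cV_{\cR_\cV}\subset \cV$ (which is essentially Weaver's Theorem 2.9) are all correct and cleanly executed. The two structural facts you isolate at the outset, in particular that every normal functional on $\cB(H)$ is a vector functional of the amplification, are exactly the right tools for those steps.

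But the proposal has a genuine gap where you yourself locate "the main difficulty": the inclusion $\cR\subset \cR_{\cV_\cR}$, i.e.\ the injectivity of $\cR\mapsto\cV_\cR$, is never proved. What you offer there --- "openness gives room around $(p_0,q_0)$," axiom (3) "encodes the freedom to redistribute mass," and "together they should let one build an operator in $\cV_\cR$ detecting exactly the pair $(p_0,q_0)$" --- is a statement of intent, not an argument. This is not bookkeeping: it is the substantive content of the theorem, the claim that the three intrinsic axioms are strong enough to force $\cR$ to be realized by a concrete weak$^*$-closed bimodule of operators. Weaver's proof of this direction occupies several pages of preparation (it passes through an analysis of quantum relations on the amplified algebra and a careful construction of operators in $\cV_\cR$ witnessing a given pair, using all three axioms plus openness in an essential and nontrivial way); nothing in your sketch indicates how the construction would go, and the reduction to cyclic projections, while a sensible first move, does not by itself produce the required operator. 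As it stands the proposal proves that $\cR\mapsto\cV_\cR$ followed by $\cV\mapsto\cR_\cV$ is a contraction ($\cR_{\cV_\cR}\subset \cR$) but not that it is the identity, so the bijectivity claim is not established.
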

 
We make extensive  use of the notations $\cV_\cR$ and $\cR_\cV$ of   Theorem \ref{ThmWeaverQRelAndIntQRel} throughout the paper. 

Given two faithful representations $\pi_j: \cM \to \cB(H_j)$, $j=1,2$, Theorem \ref{ThmWeaverQRelAndIntQRel} tells us that each $\mathrm{QRel}(\pi_j(\cM) \subset  \cB(H_j))$ is in correspondence with $\mathrm{IQRel}(\cM)$ and thus with each other.  This correspondence is discussed in \cite[Theorem 2.7]{Weaver2012MemoirAMS}, and for our use in the sequel we make it entirely explicit here.  Since any isomorphism between represented von Neumann algebras can be decomposed into an amplification, a spatial isomorphism, and a reduction \cite[Theorem IV.5.5]{Takesaki2002BookI}, it suffices to give the quantum relation corresponding to $\cV \in \mathrm{QRel}(\cM \subset  \cB(H))$ under each of these three types of maps, which is as follows.
\begin{itemize}
    \item Amplification by a Hilbert space $K$:\\ $\cV \bar{\otimes} \cB(K) \in \mathrm{QRel}(\cM \otimes 1_K \subset  \cB(H) \bar{\otimes} \cB(K) \simeq \cB(H \otimes K))$.  Here we are using the normal spatial tensor product for $\cV \bar{\otimes} \cB(K)$, meaning the weak* closure of the algebraic tensor product $\cV \odot \cB(K)$ inside $\cB(H \otimes K)$.  Note that $\cV$ is not amplified to $\cV \otimes 1_K$ but replaced with the ``larger" $\cV \bar{\otimes} \cB(K)$.
    \item Spatial isomorphism via a unitary $u$ from $H$ to $H'$:\\ $u\cV u^* \in \mathrm{QRel}(u \cM u^* \subset  \cB(H'))$.
    \item Reduction by a projection $p' \in \cM'$ with full central support in $\cM'$:\\ $(p'\cV) |_{p'H} \in \mathrm{QRel}(p'\cM \subset  \cB(p'H))$.\footnote{Recall that the central support of a projection in a von Neumann algebra is the smallest central projection dominating that projection.}
\end{itemize}
For the last, the central support condition implies that $p'\cM \simeq \cM$.

The \textit{diagonal} intrinsic quantum relation $\Delta_{\cM}$ on $\cM$ is defined by $(p,q) \in \Delta_{\cM} \iff pq \neq 0$.  It is easy to check that when $\cM \subset  \cB(H)$, $\cV_{\Delta_\cM} = \cM'$.  We sometimes write just $\Delta$ if the von Neumann algebra is implicit from the context. 
 
\subsection{Operator reflexivity}\label{SubsecRefle}  From $\cV = \cV_{\cR_\cV}$ in Theorem \ref{ThmWeaverQRelAndIntQRel}, we know that there are sufficiently many projections in $\cP=\Proj(\cM\bar\otimes \cB(\ell_2))$ to determine $\cV$.  Sometimes there are sufficiently many projections already in the $1\times 1$ level, $\text{Pr}(\cM)$.  We follow N. Weaver \cite[Section 2.5]{Weaver2012MemoirAMS} in the definitions below. 

\begin{definition}
For any subset  $\cV\subset  \cB(H)$, the \emph{operator reflexive closure of $\cV$} is defined by 
\[\text{orc}(\cV)=\{a\in \cB(H): \,  \forall p,q\in \Proj(\cB(H)),\ p\cV q=0\Rightarrow paq=0\}.\]
This is always a $w^*$-closed linear subspace of $\cB(H)$.  We say that $\cV$ is \emph{operator reflexive} if $\cV=\text{orc}(\cV)$.
\end{definition}

It is easy to see that operator reflexive spaces are closed under intersection and operator adjoint, and that $\text{orc}(\cV)$ is the smallest operator reflexive space containing $\cV$.

If $\cV$ is an $\cM'$-bimodule, then $\text{orc}(\cV)$ is a quantum relation over $\cM$, and one only needs projections in $\cM$ to define it (see \cite[Propositions 2.15, 2.18]{Weaver2012MemoirAMS}):
\[\text{orc}(\cV)=\{a\in \cB(H): \, \forall p,q\in \Proj(\cM),\  p\cV q=0\Rightarrow paq=0\}.\]

Quantum relations over atomic abelian von Neumann algebras are always operator reflexive.  Here is a simple non-example, with supporting linear algebra details left to the reader: identifying $\cB(\mathbb{C}^2)$ with $\mathbb{M}_2$, the subspace $\{ ( \begin{smallmatrix} a & b \\ c & a \end{smallmatrix} ) \mid a,b,c\in \C\}$ is a quantum relation over $\cB(\mathbb{C}^2) \subset  \cB(\mathbb{C}^2)$ that is not operator reflexive.   Thanks to N. Weaver for originally suggesting this example.

\begin{remark}
Here is the origin of the terminology.  A unital operator algebra $\cA \subset  \cB(H)$ is classically said to be ``reflexive" if no operator outside $\cA$ preserves all the invariant subspaces of $\cA$.  In fact, this is equivalent to operator reflexivity of $\cA$ as defined above (\cite[Proposition 2.19]{Weaver2012MemoirAMS}); the longer term is used in \cite{Weaver2012MemoirAMS} because ``reflexive" already has a meaning for a (quantum) relation, namely, that it contains the diagonal.  
\end{remark}

\begin{example} \label{oprefex} (cf.\ \cite{Erdos1986})
Let $\cM \subset  \cB(H)$.  Let $\varphi$ be any map from $\Proj(\cM)$ to $\Proj(\cM)$, and define $$\cV_\varphi = \{a \in \cB(H) : \,  s_\ell^\cM(aq) \leq \varphi(q), \, \forall q \in \Proj(\cM)\}.$$
Note that
\begin{equation} \label{phichar}
s_\ell^\cM(aq) \leq \varphi(q) \; \iff \; \varphi(q)aq = aq \; \iff \; \varphi(q)^\perp aq = 0.
\end{equation}
Then $\cV_\varphi$ is an operator reflexive quantum relation on $\cM$.  That it is a weak*-closed $\cM'$-bimodule is perhaps easiest to see from the last condition in \eqref{phichar}.  For operator reflexivity, suppose that $a \in \cB(H)$ has the property that $p\cV_\varphi q=0 \Rightarrow paq =0$ for all $p,q\in \Proj(\cM)$.  Since $\varphi(q)^\perp \cV_\varphi q = 0$ for all $q\in \Proj(\cM)$, we have that $\varphi(q)^\perp aq=0$ as well, and then $a \in \cV_\varphi$.
\end{example}

Example \ref{oprefex} actually characterizes operator reflexive relations, as we see in the following proposition.

\begin{proposition}
Keep the notations of Example \ref{oprefex}.  For any quantum relation $\cV$ on $\cM \subset \cB(H)$ we have $\text{orc}(\cV) = \cV_{\varphi_\cV}$, where $\varphi_\cV: \mathrm{Pr}(\cM)\to \mathrm{Pr}(\cM)$ is defined by $ \varphi_\cV(q) = \vee_{a \in \cV} s_\ell^\cM(a q)$.  Thus the operator reflexive relations on $\cM\subset \cB(H)$ are exactly those of the form $\cV_\varphi$.
\end{proposition}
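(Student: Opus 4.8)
The plan is to prove the two set-equalities at once by translating the two-projection condition defining $\text{orc}(\cV)$ into the single-projection left-support condition defining $\cV_{\varphi_\cV}$. The bridge is a clean description, for each fixed $q \in \Proj(\cM)$, of exactly which projections $p \in \Proj(\cM)$ satisfy $p\cV q = 0$. I would begin by recording the reduction already noted in the excerpt (following \cite[Propositions 2.15, 2.18]{Weaver2012MemoirAMS}): since $\cV$ is a quantum relation on $\cM$, the operator-reflexive-closure condition may be tested using projections $p,q \in \Proj(\cM)$ rather than all of $\Proj(\cB(H))$.

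The key step is the equivalence, for $p,q \in \Proj(\cM)$,
\[
p\cV q = 0 \iff p \leq \varphi_\cV(q)^\perp .
\]
To establish it, fix $a \in \cV$ and first observe that for $p \in \Proj(\cM)$ one has $p(aq)=0 \iff p\,s_\ell^\cM(aq)=0$. The nontrivial direction uses minimality of the left $\cM$-support: if $p(aq)=0$ then $p^\perp$ fixes $aq$ from the left, so $s_\ell^\cM(aq) \leq p^\perp$, whence $p\,s_\ell^\cM(aq)=0$; the converse is immediate from $aq = s_\ell^\cM(aq)\,aq$. Quantifying over $a \in \cV$ and using that a projection is orthogonal to a supremum of projections iff it is orthogonal to each, I get $p\cV q = 0 \iff p\,s_\ell^\cM(aq)=0$ for all $a \in \cV \iff p\big(\bigvee_{a\in\cV} s_\ell^\cM(aq)\big)=0 \iff p \leq \varphi_\cV(q)^\perp$.

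I would then feed this into the definition of $\text{orc}$. An operator $a \in \cB(H)$ lies in $\text{orc}(\cV)$ iff for every $q \in \Proj(\cM)$ and every $p \leq \varphi_\cV(q)^\perp$ we have $paq=0$; since the strongest such requirement is realized by the largest admissible $p$, this holds iff $\varphi_\cV(q)^\perp a q = 0$ for all $q$, which by \eqref{phichar} is exactly the condition $s_\ell^\cM(aq) \leq \varphi_\cV(q)$ defining membership in $\cV_{\varphi_\cV}$. This yields $\text{orc}(\cV) = \cV_{\varphi_\cV}$. For the final assertion, Example \ref{oprefex} already shows that every $\cV_\varphi$ is operator reflexive; conversely, any operator reflexive relation $\cX$ satisfies $\cX = \text{orc}(\cX) = \cV_{\varphi_\cX}$, hence is of the stated form.

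The argument is largely bookkeeping, and I do not expect a genuine obstacle. The one point requiring care is the left-support manipulation — specifically the equivalence $p(aq)=0 \iff p\,s_\ell^\cM(aq)=0$ and the interchange of this annihilation condition with the supremum defining $\varphi_\cV$ — which is precisely where the restriction to $p,q \in \Proj(\cM)$, so that minimality of the $\cM$-left-support applies, is essential.
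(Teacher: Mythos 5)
Your proof is correct and rests on the same key computation as the paper's: namely that for $p,q\in\mathrm{Pr}(\cM)$ one has $p\cV q=0$ iff $p\,s_\ell^\cM(aq)=0$ for all $a\in\cV$ iff $p\,\varphi_\cV(q)=0$, combined with the reduction (via Weaver) to testing $\text{orc}$ only against projections in $\cM$. The only cosmetic difference is that you package both inclusions into one chain of equivalences, whereas the paper obtains $\text{orc}(\cV)\subset\cV_{\varphi_\cV}$ by invoking minimality of the operator reflexive closure together with Example \ref{oprefex}, and then verifies the reverse inclusion by exactly your support manipulation; the substance is identical.
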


\begin{proof}
From Example \ref{oprefex} we know $\cV_{\varphi_\cV}$ is an operator reflexive relation.  It contains $\cV$: if $b \in \cV$ and $q \in \mathrm{Pr}(\cM)$, $s_\ell^\cM(bq) \leq \vee_{a \in \cV} s_\ell^\cM(a q) = \varphi_\cV(q)$.  This shows $\text{orc}(\cV) \subset \cV_{\varphi_\cV}$.

For the opposite inclusion, suppose $p,q \in \mathrm{Pr}(\cM)$ are such that $p\cV q = 0$.  For any $a \in \cV$, $paq = 0$ and so $p s_\ell^\cM(aq) = 0$.  It follows that $p \varphi_\cV(q) =  p \vee_{a \in \cV} s_\ell^\cM(a q) = 0$.
If $c \in \cV_{\varphi_\cV}$, then $\varphi_\cV(q) \geq s_\ell^\cM(cq)$, so by the foregoing
$p s_\ell^\cM(cq) = 0$ and thus $pcq =0$.  This gives $c \in \text{orc}(\cV)$.
\end{proof}

\begin{remark}
Operator reflexivity relies on the relation between  $\textrm{Pr}(\cM)^2$ and $\cB(H)$ consisting of the set $\{((p,q), a) \mid \, paq = 0\}$: $\text{orc}(\cV)$ is a ``double-perp" or ``double-commutant" type of closure.  There is a dual notion of closure in $\textrm{Pr}(\cM)^2$, or equivalently in the space of maps $\mathrm{Pr}(\cM)\to \mathrm{Pr}(\cM)$: in the language above, the ``closure" of $\varphi: \mathrm{Pr}(\cM)\to \mathrm{Pr}(\cM)$ is  $\varphi_{\cV_\varphi}$, which is $ \leq \varphi$.  This perspective is discussed more in \cite[Section 4.3]{EisnerDissertation}.
\end{remark}

\section{Quantum coarse spaces}  
\label{SectionQCoarseSp}

The notion of quantum relations  provides us with an appropriate framework to define quantum coarse structures on von Neumann algebras, which in turn will allow us to define quantum uniform Roe algebras in Section \ref{SectionQURA}. In this section, we introduce quantum and intrinsic quantum coarse spaces, notice that they are equivalent in a canonical way, and discuss their metrizability.  We postpone the investigation of morphisms and equivalences in the category of quantum coarse spaces to Section \ref{SectionQuantumCoarseEq}.

\subsection{Quantum coarse spaces}\label{SubsectionQCoarseSp}
 Recall that a \emph{coarse space} is a set $X$ together with a \emph{coarse structure} $\cE\subset  \cP(X^2)$ on $X$, i.e., $\cE$ is a family of subsets of $X^2$ containing the  diagonal $\Delta_X=\{(x,x): \,  x\in X\}$ and which  is closed under subsets, inverses,\footnote{If $E\subset  X^2$, then the \emph{inverse of $E$} is $E^{-1}=\{(x,y): \,  (y,x)\in E\}$.}  finite unions, and compositions\footnote{If $E,F\subset  X^2$, their \emph{composition} is given by $E\circ F=\{(x,y): \,  \exists z\in X, \ (x,z)\in E\text{ and }(z,y)\in F\}$.} (see \cite{RoeBook} for a detailed monograph on coarse spaces). The elements of $\cE$ are nothing but relations on $X$, often called \emph{entourages} in this context.
 
 This can be generalized to the quantum world as follows.

\begin{definition}\label{DefiQuantumCoarseSpaceOriginal}
Let $\cM\subset  \cB(H)$ be a von Neumann algebra. A family $\mathscr{V}$ of quantum relations on $\cM \subset  \cB(H)$ is called a \emph{quantum coarse structure on $\cM \subset  \cB(H)$} if
\begin{enumerate}
\item\label{ItemDefiQuanCoarseDIAG} $\cM'\in \mathscr{V}$,
\item\label{ItemDefiQuanCoarseSUB} $  \cV_1\in \mathscr{V}$,  $\cV_2\in \mathrm{QRel}(\cM\subset  \cB(H))$, and $\cV_2\subset  \cV_1$ imply  $\cV_2\in \mathscr{V}$,
\item \label{ItemDefiQuanCoarseINV} $\cV_1\in \mathscr{V}$ implies $\cV_1^*\in \mathscr{V}$ (here $\cV_1^*$ indicates the set of adjoints of elements of $\cV_1$),
\item \label{ItemDefiQuanCoarseCUP} $\cV_1,\cV_2\in \mathscr{V}$ implies $\overline{\cV_1+\cV_2}^{w^*}\in \mathscr{V}$, and
\item\label{ItemDefiQuanCoarseCOMP} $\cV_1,\cV_2\in \mathscr{V}$ implies $ \overline{\Span}^{w^*}(\cV_1 \cV_2)\in \mathscr{V}$.
\end{enumerate}
The pair $(\cM \subset  \cB(H),\mathscr{V})$ is called a \emph{quantum coarse space}.
\end{definition}

 Notice that items \eqref{ItemDefiQuanCoarseDIAG}, \eqref{ItemDefiQuanCoarseSUB},  \eqref{ItemDefiQuanCoarseINV},  \eqref{ItemDefiQuanCoarseCUP}, and \eqref{ItemDefiQuanCoarseCOMP} of Definition \ref{DefiQuantumCoarseSpaceOriginal} are   the quantum versions of a coarse structure  $\cE$ containing $\Delta_X$ and being closed under subsets, inverses,  finite unions, and  compositions, respectively. Moreover,  if $\cM=\ell_\infty(X)$ and $H=\ell_2(X)$ for some set $X$,   Proposition \ref{PropositionWeaverRelation} and \cite[Proposition 2.5]{Weaver2012MemoirAMS} provide a canonical  equivalence between classical coarse structures $\cE$ on $X$ and quantum coarse structures $\mathscr{V}_\cE=\{\cV_E: \,  E\in \cE\}$ on $\ell_\infty(X)$.

 \begin{definition}
 We will say that a quantum coarse structure $\mathscr{V}$, or a quantum coarse space $(\cM \subset  \cB(H),\mathscr{V})$, is \emph{operator reflexive} if $\text{orc}(\cV) \in \mathscr{V}$ for all $\cV\in \mathscr{V}$.
 \end{definition}
 
If $\mathbf{U}$ 
is a family of quantum relations on a von Neumann algebra $\cM \subset  \cB(H)$, then the \emph{quantum coarse structure generated by $\mathbf{U}$}, denoted by $\mathscr{V}_\mathbf{U}$, is the   intersection of all quantum coarse structures which contain $\mathbf{U}$. 
If $\mathbf{U}$ consists of the single quantum relation $\cU$, we may simply write $\mathscr{V}_{\cU}$ instead of $\mathscr{V}_\mathbf{U}$.

\subsection{Metrizability of quantum coarse spaces}

\begin{definition} (\cite[Definition 2.3]{KuperbergWeaver2012MemAMS}).
Let $\cM\subset  \cB(H)$ be a von Neumann algebra. A family $\mathbf{V}=(\cV_t)_{t\geq 0}$ of weak$^*$-closed operator systems\footnote{A closed subspace of $\cB(H)$ is an \emph{operator system} if it is self-adjoint and contains $1_H$.} in $\cB(H)$ is called a \emph{quantum metric on $\cM \subset  \cB(H)$} if
\begin{enumerate}
    \item $\cV_0=\cM'$,
    \item $\cV_t\cV_s\subset  \cV_{t+s}$ for all $t,s\geq 0$, and 
    \item $\cV_t=\bigcap_{s>t}\cV_s$ for all $t\geq 0$.
\end{enumerate}
The pair $(\cM\subset \cB(H),\mathbf{V}) $ is called a \emph{quantum metric space}.
\end{definition}
Notice that  the definition of quantum metric   implies that each $\cV_t$ is a quantum relation on $\cM$.  One thinks of $\cV_t$ as ``distance $\leq t$". The definition also implies that the quantum coarse structure   $\mathscr{V}_\mathbf{V}$ generated by $(\cM,(\cV_t)_{t\geq 0})$ is nothing but
\[\mathscr{V}_\mathbf{V}=\Big\{\cV\in \mathrm{QRel}(\cM\subset  \cB(H)): \,  \exists t\geq 0\text{ so that }\cV\subset  \cV_t\Big\}.\]
Clearly, $\mathscr{V}_\mathbf{V}$ is countably generated by $(\cM, (\cV_n)_{n=0}^\infty)$ (we discuss this further in Proposition \ref{PropMetrizable} below).

\begin{example} \label{quantummetric}
A metric space $(X,d)$ gives rise to a quantum metric on $X$ 
by setting \[\cV_t = \Big\{(a_{xy})_{x,y\in X}\in \cB(\ell_2(X)): \, d(x,y) > t  \Rightarrow a_{xy}=0\Big\},\]  a ``thickened diagonal" in $\cB(\ell_2(X))$.
\end{example}

\begin{example} \label{quantumgraph}
A quantum relation  $\cV\in\mathrm{QRel}(\cM \subset  \cB(H))$ is a \emph{quantum graph} if $\cM'\subset  \cV$ and $\cV=\cV^*$ (see \cite[Definition 2.6]{Weaver2012MemoirAMS}), i.e., if $\cV$ is a weak$^*$-closed operator system   which is a bimodule over $\cM'$. Then the quantum coarse structure generated by $\cV $, $\mathscr{V}_{\cV}$, is called the \emph{quantum graph coarse structure given by $\cV$}. It is nothing but the quantum metric in which $$\cV_t=\overline{\Span}^{w^*}(\underbrace{\cV\cdot\ldots\cdot \cV}_\text{$\lfloor t \rfloor$-times})$$
(here the 0-fold product is interpreted as the diagonal $\cM'$).

A classical graph $G$ with vertex set $X$ gives rise to a quantum graph on $\ell_\infty(X) \subset  \cB(\ell_2(X))$ by taking \[\cV = \{(a_{xy})_{x,y\in X}\in \cB(\ell_2(X)): \, \text{$x,y$ nonadjacent} \Rightarrow a_{xy}=0\}.\]  In other words, the corresponding relation on $X$ is ``adjacency in $G$," and the quantum graph coarse structure is the quantum metric associated to the path metric as in Example \ref{quantummetric}.
\end{example} 

In classical large scale geometry, it is well known that a  coarse space is metrizable if and only if its coarse structure is countably generated (\cite[Theorem 2.55]{RoeBook}). The next result shows that the same holds in the quantum world. We say that a quantum coarse structure $\mathscr{V}$ on a von Neumann algebra $\cM$ is \emph{metrizable} if $\mathscr{V}=\mathscr{V}_{\mathbf{V}}$ for some quantum metric $\mathbf{V}$ on $\cM$.

\begin{proposition}\label{PropMetrizable}
Let $(\cM,\mathscr{V})$ be a quantum coarse space. Then $\mathscr{V}$ is metrizable if and only if  it is countably generated.
\end{proposition}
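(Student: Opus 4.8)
The plan is to prove both implications, with the forward direction essentially immediate from the remarks preceding the proposition and the reverse direction requiring an explicit construction of a quantum metric. For \emph{metrizable $\Rightarrow$ countably generated}: if $\mathscr{V}=\mathscr{V}_{\mathbf{V}}$ for a quantum metric $\mathbf{V}=(\cV_t)_{t\ge 0}$, recall from the excerpt that $\mathscr{V}_{\mathbf{V}}=\{\cV\in\mathrm{QRel}(\cM\subset\cB(H)):\exists\,t\ge 0,\ \cV\subset\cV_t\}$. The family is monotone, i.e.\ $\cV_s\subset\cV_t$ for $s\le t$: since each $\cV_{t-s}$ is an operator system and $\cM'$-bimodule it contains $\cM'\cdot 1\cdot\cM'=\cM'=\cV_0$, whence $\cV_s=\cV_s\cM'\subset\cV_s\cV_{t-s}\subset\cV_t$. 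Consequently every member of $\mathscr{V}$ lies inside some $\cV_n$ with $n\in\NN$, so the countable family $(\cV_n)_{n\in\NN}$ generates $\mathscr{V}$.

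For the converse, suppose $\mathscr{V}$ is generated by a countable family $\{\cU_n:n\ge 1\}$ of quantum relations. First I would symmetrize, setting $\cG_n=\overline{\cM'+\cU_n+\cU_n^*}^{w^*}$, a quantum graph lying in $\mathscr{V}$ by items \eqref{ItemDefiQuanCoarseDIAG}, \eqref{ItemDefiQuanCoarseINV}, \eqref{ItemDefiQuanCoarseCUP} of Definition \ref{DefiQuantumCoarseSpaceOriginal}. Next I would form the increasing sequence of quantum graphs $\mathcal{H}_n=\overline{\cG_1+\cdots+\cG_n}^{w^*}$ (with $\mathcal{H}_0=\cM'$), each again in $\mathscr{V}$ by item \eqref{ItemDefiQuanCoarseCUP}. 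Finally I would put
\[
\mathcal{W}_n=\overline{\Span}^{w^*}(\underbrace{\mathcal{H}_n\cdots\mathcal{H}_n}_{n}),
\]
the $n$-fold product (the $0$-fold product being $\cM'$), so that $\mathcal{W}_n\in\mathscr{V}$ by item \eqref{ItemDefiQuanCoarseCOMP}. Since $\cM'\subset\mathcal{H}_n\subset\mathcal{H}_{n+1}$, the $\mathcal{W}_n$ increase, and the crucial submultiplicativity $\overline{\Span}^{w^*}(\mathcal{W}_n\mathcal{W}_m)\subset\mathcal{W}_{n+m}$ holds because $\mathcal{H}_n,\mathcal{H}_m\subset\mathcal{H}_{n+m}$ forces the concatenated product of length $n+m$ into $\mathcal{W}_{n+m}$.

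I would then define the candidate quantum metric by $\cV_t=\mathcal{W}_{\floor{t}}$ for $t\ge 0$. Axiom (1) holds as $\cV_0=\mathcal{W}_0=\cM'$; each $\cV_t$ is a weak$^*$-closed operator system by construction; axiom (2) follows from $\floor{t}+\floor{s}\le\floor{t+s}$ together with monotonicity and the submultiplicativity above; and axiom (3) is automatic, since $t\mapsto\mathcal{W}_{\floor{t}}$ is right-continuous and piecewise constant, so that $\cV_s=\cV_t$ for $s$ slightly larger than $t$ and hence $\bigcap_{s>t}\cV_s=\cV_t$. It then remains to verify $\mathscr{V}_{\mathbf{V}}=\mathscr{V}$: the inclusion $\subset$ is clear because each $\cV_t=\mathcal{W}_{\floor{t}}\in\mathscr{V}$ and $\mathscr{V}$ is closed under subrelations (item \eqref{ItemDefiQuanCoarseSUB}); for $\supset$, note $\cU_n\subset\mathcal{H}_n\subset\mathcal{W}_n$, so $\mathscr{V}_{\mathbf{V}}$ is a quantum coarse structure containing every generator and therefore contains $\mathscr{V}$.

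I expect the main obstacle to be the construction guaranteeing the submultiplicative relation $\mathcal{W}_n\mathcal{W}_m\subset\mathcal{W}_{n+m}$ while still recovering the original coarse structure; this is the quantum analog of arranging classical entourages so that $E_n\circ E_m\subset E_{n+m}$. A secondary technical point I would handle but not dwell on is that operator multiplication is only separately, not jointly, weak$^*$-continuous, so the containment of products of weak$^*$-closures inside $\mathcal{W}_{n+m}$ requires the standard two-step argument using separate weak$^*$-continuity of left and right multiplication by a fixed operator, together with the fact that $\mathcal{W}_{n+m}$ is weak$^*$-closed. Finally, the choice of the floor function (rather than, say, the ceiling) is precisely what makes axiom (3) hold on the nose.
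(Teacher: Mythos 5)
Your proof is correct and follows essentially the same strategy as the paper's: reduce to an increasing, submultiplicative sequence of weak$^*$-closed operator systems in $\mathscr{V}$ containing the generators, and then extend to all $t\geq 0$ via $\cV_t=\cV_{\lfloor t\rfloor}$ so that axiom (3) holds trivially. The only (immaterial) difference is how that sequence is built — you take $n$-fold products of the cumulative joins $\mathcal{H}_n$, whereas the paper defines it by the inductive squaring $\cV_{n+1}=\overline{\Span}^{w^*}(\cV_n\cV_n)+\cU_{n+1}$; both yield the required containments $\cV_n\cV_m\subset\cV_{n+m}$ and recover $\mathscr{V}$.
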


\begin{proof}
 As noticed before Example \ref{quantummetric}, if $\mathscr{V}$ is metrizable then $\mathscr{V}$ is countably generated. Suppose then that $\mathscr{V}$  is the quantum coarse structure generated by a sequence of quantum relations  $(\cU_n)_{n=1}^\infty$ on $\cM$; replacing each $\cU_n$ by $\overline{\cU_{n}+\cU^*_n}^{w^*}$, if necessary, we can assume that each $\cU_n$ is an operator system.  We define $(\cV_n)_{n=0}^\infty$ inductively by setting $\cV_0=\cM'$  and, for $n\geq 0$, 
 \[\cV_{n+1}=\overline{ \Span }^{w^*}(\cV_{n}\cV_{n})+\cU_{n+1}.\]
 Since both $\cV_n$ and $\cU_{n+1}$ are closed under taking adjoints,   so is $\cV_{n+1}$. It is also clear that $\mathscr{V}$ is the quantum coarse structure generated by $(\cV_n)_{n=0}^\infty$, since the $\cV_n$ are in $\mathscr{V}$, and each $\cU_{n+1}$ is contained in $\cV_{n+1}$.
 
 Finally, since $\cV_n\cV_n\subset  \cV_{n+1}$ for all $n \geq 0$, it follows that  $\cV_{n}\cV_{m}\subset  \cV_{n+m}$ for all $n,m\in\N\cup\{0\}$. Letting $\cV_t=\cV_{\lfloor t\rfloor}$ for all $t\geq 0$, we obtain that $\mathbf{V}=(\cV_t)_{t\geq 0}$ is a quantum metric on $\cM$ which generates $\mathscr{V}$.
\end{proof}

\subsection{Intrinsic  quantum coarse spaces}
Intrinsic quantum relations  play a fundamental role in this paper, both in order to provide examples of quantum uniform Roe algebras (Section \ref{SubsectionDimFunc}) and to define morphisms between quantum coarse spaces (Section \ref{SectionQuantumCoarseEq}). We next introduce  the notion of intrinsic quantum coarse spaces, defined via the intrinsic versions of the axioms for quantum coarse spaces. Notice that  if  $\cR$ and $\cR'$ are intrinsic quantum relations, then  so are $\cR\cup\cR'$ and $\cR^{-1}=\{(p,q) : \,  (q,p)\in \cR\}$. As for the  composition of intrinsic quantum relations,  it follows from Theorem \ref{ThmWeaverQRelAndIntQRel} that  $\cR \circ \cR'$ should be defined as $\cR_{\cV_\cR \cV_{\cR'}}$\footnote{We allow ourselves the following abuse of notation: given quantum relations $\cV_1$ and $\cV_2$, we let   $\cR_{\cV_1\cV_2}=\cR_{\overline{\Span}^{w^*}(\cV_1\cV_2)}$ and $\cR_{\cV_1+\cV_2}=\cR_{\overline{\cV_1+\cV_2}^{w^*}}$. Notice that this is indeed an abuse of notation since  $\cV_1\cV_2$ and $\cV_1+\cV_2$ need not be quantum relations.}.   
We have the following description, inspired by \cite[Proposition 1.5]{Weaver2012MemoirAMS}.

\begin{proposition}\label{PropComposition}
Consider intrinsic quantum relations $\cR_1$ and $\cR_2$ on $\cM$. Then the composition
\[\cR_1\circ \cR_2=\Big\{(p,q)\in\cP^2 : \,    \forall r\in \cP,\   (p,r )\in \cR_1 \text{ or }
(r^\perp,q)\in \cR_2 \Big\}\]
is the intrinsic quantum relation corresponding to the quantum relations $\cV_{\cR_1} \cV_{\cR_2}$, i.e., $\cR_1\circ \cR_2=\cR_{\cV_1\cV_2}$. 
\end{proposition}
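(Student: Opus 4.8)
The plan is to unwind both sides through Weaver's correspondence (Theorem \ref{ThmWeaverQRelAndIntQRel}) and reduce everything to a statement about when a product $p(a_1\otimes 1)(a_2\otimes 1)q$ vanishes. Write $\cV_1=\cV_{\cR_1}$ and $\cV_2=\cV_{\cR_2}$. By definition $\cR_{\cV_1\cV_2}=\cR_{\overline{\Span}^{w^*}(\cV_1\cV_2)}$, so $(p,q)\notin \cR_{\cV_1\cV_2}$ means $p(a\otimes 1)q=0$ for every $a$ in the weak$^*$-closed span of $\cV_1\cV_2$. Since $a\mapsto p(a\otimes 1)q$ is linear and weak$^*$-continuous (the amplification $a\mapsto a\otimes 1$ is normal, and multiplication by the fixed $p,q$ is weak$^*$-continuous), its kernel is a weak$^*$-closed subspace; hence it contains $\overline{\Span}^{w^*}(\cV_1\cV_2)$ iff it contains every product $a_1a_2$. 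Using $(a_1a_2)\otimes 1=(a_1\otimes 1)(a_2\otimes 1)$, I conclude
\[(p,q)\notin \cR_{\cV_1\cV_2}\iff p(a_1\otimes 1)(a_2\otimes 1)q=0\ \text{for all }a_1\in\cV_1,\ a_2\in\cV_2.\]
It therefore suffices to prove that this last condition is equivalent to $(p,q)\notin \cR_1\circ\cR_2$, i.e.\ to the existence of $r\in\cP$ with $(p,r)\notin\cR_1$ and $(r^\perp,q)\notin\cR_2$.

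For the easy direction, suppose such an $r$ exists. Unwinding the definition of $\cR_{\cV_i}$, $(p,r)\notin\cR_1$ says $p(a_1\otimes 1)r=0$ for all $a_1\in\cV_1$, and $(r^\perp,q)\notin\cR_2$ says $r^\perp(a_2\otimes 1)q=0$ for all $a_2\in\cV_2$. Inserting $1=r+r^\perp$ between the two amplifications gives $p(a_1\otimes 1)(a_2\otimes 1)q=p(a_1\otimes 1)r(a_2\otimes 1)q+p(a_1\otimes 1)r^\perp(a_2\otimes 1)q=0$, since the first summand is killed by $p(a_1\otimes 1)r=0$ and the second by $r^\perp(a_2\otimes 1)q=0$.

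For the converse, assume $p(a_1\otimes 1)(a_2\otimes 1)q=0$ for all $a_1\in\cV_1$, $a_2\in\cV_2$. The natural candidate is the smallest $r$ making $(r^\perp,q)\notin\cR_2$, namely $r=\bigvee_{a_2\in\cV_2}[(a_2\otimes 1)q]$, the join (computed in $\cB(H\otimes\ell_2)$) of the range projections of the operators $(a_2\otimes 1)q$. By construction $r^\perp(a_2\otimes 1)q=0$ for every $a_2$, so $(r^\perp,q)\notin\cR_2$. Moreover $p(a_1\otimes 1)$ annihilates each range $[(a_2\otimes 1)q]$ (this is exactly the hypothesis), and since $\ker\big(p(a_1\otimes 1)\big)$ is a closed subspace containing every such range it contains their closed span, i.e.\ the range of $r$; hence $p(a_1\otimes 1)r=0$ and $(p,r)\notin\cR_1$.

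The one nontrivial point --- and the main obstacle --- is verifying that $r$ actually lies in $\cP=\Proj(\cM\bar\otimes\cB(\ell_2))$; the individual range projections $[(a_2\otimes 1)q]$ need not. Since $(\cM\bar\otimes\cB(\ell_2))'=\cM'\otimes 1$, it is enough to check that $r$ commutes with $u'\otimes 1$ for every unitary $u'\in\cM'$. Here I will use that $\cV_2$ is an $\cM'$--$\cM'$ bimodule: for a unitary $u'\in\cM'$ and $a_2\in\cV_2$ we have $u'a_2u'^*\in\cV_2$, while $q\in\cM\bar\otimes\cB(\ell_2)$ commutes with $u'^*\otimes 1$, so
\[(u'\otimes 1)(a_2\otimes 1)q(u'^*\otimes 1)=\big((u'a_2u'^*)\otimes 1\big)q.\]
Conjugating a range projection by the unitary $u'\otimes 1$ gives the range projection of the conjugated operator, so $(u'\otimes 1)[(a_2\otimes 1)q](u'^*\otimes 1)=[((u'a_2u'^*)\otimes 1)q]$. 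As $a_2\mapsto u'a_2u'^*$ is a bijection of $\cV_2$, conjugation by $u'\otimes 1$ merely permutes the family $\{[(a_2\otimes 1)q]:a_2\in\cV_2\}$ and therefore fixes its join $r$. Thus $r$ commutes with every $u'\otimes 1$, hence with $\cM'\otimes 1$, so $r\in(\cM'\otimes 1)'=\cM\bar\otimes\cB(\ell_2)$ and $r\in\cP$. Combining the two directions yields $(p,q)\notin\cR_1\circ\cR_2\iff(p,q)\notin\cR_{\cV_1\cV_2}$, which is the claim.
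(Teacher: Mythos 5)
Your proof is correct, and its skeleton matches the paper's: one direction is the insertion of $r+r^\perp$ between the two amplified factors, and the other hinges on producing a single witnessing projection $r$ that genuinely lies in $\cM\bar\otimes\cB(\ell_2)$, which both you and the authors secure via the $\cM'$-bimodule property of one of the quantum relations. The one genuine difference is which projection you build: the paper works directly (not contrapositively) and takes $r$ to be the projection onto $\bigcap_{v_1\in\cV_1}\ker\bigl(p(v_1\otimes 1)\bigr)$, i.e.\ the \emph{largest} $r$ with $(p,r)\notin\cR_1$, invoking the bimodule property of $\cV_1$; you take the dual choice $r=\bigvee_{a_2\in\cV_2}[(a_2\otimes 1)q]$, the \emph{smallest} $r$ with $(r^\perp,q)\notin\cR_2$, invoking the bimodule property of $\cV_2$. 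Both work, and neither buys much over the other beyond symmetry of exposition. Two things you do more carefully than the paper: you spell out the weak$^*$-continuity argument reducing membership in $\cR_{\overline{\Span}^{w^*}(\cV_1\cV_2)}$ to a condition on plain products $a_1a_2$ (the paper asserts the existence of $v_1,v_2$ with $p(v_1v_2\otimes 1)q\neq 0$ without comment, though the same justification is needed), and you explicitly verify $r\in\cP$ by the conjugation-permutes-the-joinands argument, where the paper only remarks that $K$ is invariant under $\cM'\otimes 1$. No gaps.
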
 

\begin{proof}
To simplify notation, let $\cV_1=\cV_{\cR_1}$  and $\cV_1=\cV_{\cR_2}$. Notice that by Theorem  \ref{ThmWeaverQRelAndIntQRel} we have $\cR_i=\cR_{\cV_i}$ for $i\in \{1,2\}$. This will be used multiple times below.

Let $(p,q)\in  \cR_{\cV_1\cV_2}$. Then there must be $v_1\in \cV_1$ and $v_2\in \cV_2$ so that $p(v_1v_2\otimes 1)q\neq 0$. For any $r \in \cP$, we have
 \[ 0 \neq p(v_1 v_2 \otimes 1)q= p(v_1\otimes 1)r(v_2 \otimes 1)q + p(v_1\otimes 1)r^\perp(v_2 \otimes 1)q,\]
so that one of the summands is not zero.  Thus either   $(p,r )\in \cR_{1} \text{ or }
(r^\perp,q)\in \cR_{2}$.

 Now say $(p,q)\in \cR_1\circ \cR_2$. Let $K$ be the closure of all $\xi\in H\otimes \ell_2$ so that $p(v\otimes1)\xi =0$ for all $v\in \cV_1$. As $\cV_1$ is a bimodule over $\cM'$, it follows that $K$ is invariant under $\cM'\otimes 1$. So the projection onto $K$, say  $r$, must belong to $\cM\bar\otimes \cB(\ell_2)$. By the definition of $K$, we must have that  $p(v\otimes 1)r=0$ for all $v\in \cV_1$. Therefore, $(p,r)\not\in \cR_1$; by assumption we must have that $(r^\perp, q)\in \cR_2$. Hence there is $v_2 \in \cV_2$ so that $r^\perp(v_2 \otimes 1)q\neq 0$. By the definition of $K$ and $r$, there must be some $v_1\in \cV_1$ for which \[p(v_1\otimes 1)r^\perp(v_2\otimes 1)q \neq 0.\]
Since $p(v_1 \otimes 1)r=0$, we conclude that $p(v_1 v_2 \otimes 1)q\neq 0$; so  $(p,q)\in \cR_{\cV_1\cV_2}$. 
\end{proof}

We can now define \emph{intrinsic   quantum coarse spaces}:
 
\begin{definition}\label{DefiQuantumCoarseSpace}
Let  $\cM$ be a von Neumann algebra  and  $\mathscr{R}$ be a family of intrinsic quantum relations on $\cM$. The family $\mathscr{R}$ is called an \emph{intrinsic  quantum   coarse structure on $\cM$}   if
\begin{enumerate}
\item\label{Item1Def} $\Delta_{\cM}=\{(p,q): \, pq\neq 0\} \in \mathscr{R}$,
\item\label{Item2Def} $\cR_1\in \mathscr{R}$, $\cR_2\in \mathrm{IQRel}(\cM)$ and $\cR_2\subset  \cR_1$ imply $\cR_2\in \mathscr{R}$, 
\item \label{Item3Def} $\cR_1 \in \mathscr{R}$ implies $\cR_1^{-1}\in \mathscr{R}$,
\item\label{Item4Def} $\cR_1,\cR_2\in \mathscr{R}$ implies $\cR_1\cup\cR_2\in \mathscr{R}$, and
\item\label{Item5Def} $\cR_1,\cR_2\in \mathscr{R}$ implies $\cR_1\circ \cR_2\in \mathscr{R}$.
\end{enumerate}
The pair $(\cM,\mathscr{R})$ is called an \emph{intrinsic   quantum  coarse space}. 
\end{definition}

 Theorem \ref{ThmWeaverQRelAndIntQRel} provides a canonical bijection between $\mathrm{QRel}(\cM \subset  \cB(H))$ and $\mathrm{IQRel}(\cM)$, and the discussion above shows that this bijection preserves the  natural operations on each of them.  We sum this up in the following corollary.
 
 \begin{corollary}\label{CorQRelAndIQRel}
 Let $\cM\subset  \cB(H)$ be a von Neumann algebra. The bijective assignment 
 \[\cV\in \mathrm{QRel}(\cM \subset  \cB(H))\mapsto \cR_\cV\in \mathrm{IQRel}(\cM)\]
 defined in Theorem \ref{ThmWeaverQRelAndIntQRel} satisfies the following:
 \begin{enumerate}
 \item\label{ItemQRelAndIReL1} $\cR_{\cM'}=\Delta_\cM$,
 \item\label{ItemQRelAndIReL2} if $\cV \in  \mathrm{QRel}(\cM\subset  \cB(H))$, then $\cR_{\cV^*}=\cR_{\cV}^{-1}$,
 \item\label{ItemQRelAndIReL3}if $\cV_1,\cV_2\in \mathrm{QRel}(\cM\subset  \cB(H))$, then $\cR_{  \cV_1+\cV_2  }=\cR_{\cV_1}\cup \cR_{\cV_2}$, and 
 \item\label{ItemQRelAndIReL4} if $\cV_1,\cV_2\in \mathrm{QRel}(\cM\subset  \cB(H))$, then $\cR_{ \cV_1\cV_2}=\cR_{\cV_1}\circ \cR_{\cV_2}$.
\end{enumerate}
  \end{corollary}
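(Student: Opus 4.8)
The plan is to verify the four identities one at a time, in each case just unwinding the definition
$\cR_\cV=\{(p,q)\in\cP^2 : \exists\, a\in\cV,\ p(a\otimes1)q\neq0\}$
from Theorem \ref{ThmWeaverQRelAndIntQRel}, together with one structural remark I would record at the outset: the assignment $\cV\mapsto\cR_\cV$ is monotone, i.e.\ $\cV\subset\cW$ implies $\cR_\cV\subset\cR_\cW$, since any witness for membership in $\cR_\cV$ is automatically a witness for $\cR_\cW$. With that in hand, item \eqref{ItemQRelAndIReL4} needs no new argument: it is exactly Proposition \ref{PropComposition} applied to $\cR_1=\cR_{\cV_1}$ and $\cR_2=\cR_{\cV_2}$, using $\cV_{\cR_i}=\cV_i$ from Theorem \ref{ThmWeaverQRelAndIntQRel}, so I would simply cite it.

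For \eqref{ItemQRelAndIReL1} I would use that, for $a\in\cM'$, the amplification $a\otimes1$ lies in $(\cM\bar\otimes\cB(\ell_2))'=\cM'\otimes\C1$ and hence commutes with every projection in $\cP$. Thus $p(a\otimes1)q=(a\otimes1)pq$, which is nonzero for some $a\in\cM'$ (take $a=1$) precisely when $pq\neq0$; this says $\cR_{\cM'}=\{(p,q):pq\neq0\}=\Delta_\cM$. For \eqref{ItemQRelAndIReL2} I would compute adjoints: writing a general element of $\cV^*$ as $a^*$ with $a\in\cV$, one has $\big(p(a^*\otimes1)q\big)^*=q(a\otimes1)p$, and an operator is nonzero iff its adjoint is. Hence $(p,q)\in\cR_{\cV^*}$ iff some $a\in\cV$ satisfies $q(a\otimes1)p\neq0$, i.e.\ iff $(q,p)\in\cR_\cV$, which is exactly $(p,q)\in\cR_\cV^{-1}$.

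Item \eqref{ItemQRelAndIReL3} is where the only genuine care is needed, because of the weak$^*$ closure hidden in the notation $\cR_{\cV_1+\cV_2}=\cR_{\overline{\cV_1+\cV_2}^{w^*}}$. The inclusion $\supseteq$ is immediate from monotonicity, since $\cV_1,\cV_2\subset\overline{\cV_1+\cV_2}^{w^*}$. For $\subseteq$, the obstacle is that a witness $c$ for $(p,q)\in\cR_{\overline{\cV_1+\cV_2}^{w^*}}$ need only lie in the closure, not in the algebraic sum, so it cannot be split as $a+b$ outright. I would resolve this by observing that the map $x\mapsto p(x\otimes1)q$ is weak$^*$-continuous (the amplification $x\mapsto x\otimes1$ is normal, and left and right multiplication by the fixed operators $p,q$ are weak$^*$-continuous). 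Consequently, if $p(c\otimes1)q\neq0$ for some $c$ in the weak$^*$ closure, then along a net in $\cV_1+\cV_2$ converging weak$^*$ to $c$ the images cannot all vanish, so already some $c'=a+b\in\cV_1+\cV_2$ satisfies $p(c'\otimes1)q\neq0$; then $p(a\otimes1)q+p(b\otimes1)q\neq0$ forces one summand to be nonzero, placing $(p,q)$ in $\cR_{\cV_1}$ or $\cR_{\cV_2}$. This weak$^*$-continuity remark is exactly what justifies the footnote's abuse of notation, and it is the one technical point I would state explicitly; everything else is a direct unwinding of definitions.
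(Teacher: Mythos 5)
Your proof is correct and follows the route the paper intends: the corollary is stated there without proof as a summary of the preceding discussion, which already contains your item \eqref{ItemQRelAndIReL1} (the remark that $\cV_{\Delta_\cM}=\cM'$) and item \eqref{ItemQRelAndIReL4} (Proposition \ref{PropComposition}), while items \eqref{ItemQRelAndIReL2} and \eqref{ItemQRelAndIReL3} are exactly the routine unwindings you supply. Your explicit weak$^*$-continuity argument for the sum in item \eqref{ItemQRelAndIReL3} is the one step the paper leaves implicit behind the footnote's abuse of notation, and it is precisely the right justification.
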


In light of Corollary \ref{CorQRelAndIQRel}, quantum coarse structures on $\cM \subset  \cB(H)$ and intrinsic quantum coarse structures on $\cM$ naturally correspond, and we denote by $\mathscr{V}\mapsto \mathscr{R}_\mathscr{V}$ and $\mathscr{R}\mapsto\mathscr{V}_\mathscr{R}$ the assignments   implementing the two directions of this correspondence.  We naturally say that an intrinsic quantum coarse space $(\cM,\mathscr{R})$ is  operator reflexive  if $\mathscr{V}_{\mathscr{R}}$ is operator reflexive.

Again, we may abridge notation in the commutative case: if $\cM=L_\infty(X,\mu)$  for a finitely decomposable measure space $(X,\mu)$, we say that $\mathscr{V}$ is a \emph{quantum coarse structure on $X$} (resp. $(X,\mathscr{V})$ is a \emph{quantum coarse space}) if $\mathscr{V}$ is a quantum coarse structure on $L_\infty(X,\mu) \subset  \cB(L_2(X,\mu))$ (resp. $(L_\infty(X,\mu) \subset  \cB(L_2(X,\mu)),\mathscr{V})$ is a quantum coarse space). We treat intrinsic coarse structures/spaces on commutative algebras similarly.

  \section{Quantum uniform Roe algebras}
  \label{SectionQURA}

The axioms for a coarse space $(X,\cE)$ imply that the set of operators in $\cB(\ell_2(X))$ controlled 
by at least one of the relations in $\cE$ is a unital *-algebra, so its norm closure is a unital \cstar-algebra, called the \textit{uniform Roe algebra of $(X,\cE)$} and denoted by $\cstu(X, \cE)$.  Because of Proposition \ref{PropositionWeaverRelation}, this is just the closed union of the members of the corresponding quantum coarse structure on $\ell_\infty(X) \subset  \cB(\ell_2(X))$.  We take it as a general definition.  
 
 \begin{definition}\label{DefiQuantumURoeAlgGEN}
Let $(\cM \subset  \cB(H),\mathscr{V})$ be a  quantum coarse space.  
\begin{enumerate}
    \item We call $\cstu[\cM,\mathscr{V}]=\bigcup_{\cV\in \mathscr{V}} \cV$  the \emph{algebraic quantum uniform Roe algebra of $(\cM,\mathscr{V})$}.
    \item We call $\cstu(\cM,\mathscr{V})=\overline{\bigcup_{\cV\in \mathscr{V}} \cV}^{\|\cdot\|}$ the   \emph{quantum uniform Roe algebra of $(\cM,\mathscr{V})$}.
\end{enumerate}
\end{definition}
  
  We have that $\cstu[\cM,\mathscr{V}]$ is a $^*$-algebra and $\cstu(\cM, \mathscr{V})$ is a $\mathrm C^*$-algebra. Classical uniform Roe algebras are thought of as encoding large scale geometry, and much of the motivation comes from metric spaces.  We see analogous potential in the quantum metric spaces, as introduced in \cite{KuperbergWeaver2012MemAMS} and discussed above.  Definition \ref{DefiQuantumURoeAlgGEN} is general enough, however, to include much more than metric inspiration.  In Section \ref{SubsectionDimFunc} we present an entirely new class of examples.

 \subsection{Basic properties} \label{basicprop}

\subsubsection{The effect of choosing a different representation of $\cM$.} \label{diffrep} Given two faithful representations $\pi_j: \cM \to \cB(H_j)$, $j\in \{1,2\}$, the correspondence between $\mathrm{QRel}(\pi_1(\cM) \subset  \cB(H_1))$ and $\mathrm{QRel}(\pi_2(\cM) \subset  \cB(H_2))$ is described after Theorem \ref{ThmWeaverQRelAndIntQRel}.  Each quantum relation is transformed in the same way as the underlying Hilbert space in passing from one representation to the other by an amplification, spatial isomorphism, and reduction.  As noted in \cite[Theorem 2.7]{Weaver2012MemoirAMS}, this correspondence preserves diagonals (i.e., takes $\pi_1(\cM)'$ to $\pi_2(\cM)'$), inclusions, adjoints, and the operations $(\cV_1, \cV_2) \mapsto \overline{\cV_1 +\cV_2}^{w^*}$ and $(\cV_1, \cV_2) \mapsto\overline{\text{span}}^{w^*}{(\cV_1 \cV_2)}$.  It follows that the possible quantum coarse structures and (algebraic) quantum uniform Roe algebras on $\pi_1(\cM) \subset  \cB(H_1)$ and $\pi_2(\cM) \subset  \cB(H_2)$ also transform under change of representation in the ways given after Theorem \ref{ThmWeaverQRelAndIntQRel}: from $\mathscr{V}$ to $\mathscr{V} \bar{\otimes} \cB(K)$, $u\mathscr{V}u^*$, or $(p'\mathscr{V} p') \mid_{p'H}$.  The only fact that may not be obvious is that amplification by a Hilbert space $K$ commutes with the quantum uniform Roe algebra construction, as the sets involve closures in different orders:
$$\cstu(\cM,\mathscr{V}) \bar{\otimes} \cB(K)  =\overline{\bigcup_{\cV\in \mathscr{V}} \cV}^{\|\cdot\|} \bar{\otimes} \cB(K) =\overline{ \bigcup_{\cV\in \mathscr{V}} \cV \bar{\otimes} \cB(K)}^{\|\cdot\|} = \cstu(\cM,\mathscr{V}\bar{\otimes} \cB(K)).$$
The outside equalities are definitions.  An element of the second set has the form $\sum_{i,j} (a_{ij} \otimes e_{ij})$, where the sum is $w^*$-convergent and each $a_{ij}$ is the norm limit of some $\{v^k_{ij}\}_k \subset \cup_{\cV\in \mathscr{V}} \cV$.  Passing to subsequences if necessary, we may assume $\|v^k_{ij} - a_{ij}\| \leq 2^{-i-j-k}$, and then $ \sum (a_{ij} \otimes e_{ij})$ is the norm limit of $\sum (v^k_{ij} \otimes e_{ij})$, so an element of the third set.  The converse implication amounts to the observation that when a matrix of operators converges in norm, so do each of the entries.

Thus, to understand the quantum coarse structures and quantum uniform Roe algebras that can be associated to $\cM$, it suffices to work with a specific representation of $\cM$.

\subsubsection{Starting only with an intrinsic coarse space.} \label{subsectioniqura} Given an intrinsic quantum coarse space $(\cM, \mathscr{R})$, a representation $\pi: \cM \to \cB(H)$ gives rise to the quantum coarse space $(\pi(\cM) \subset  \cB(H), \mathscr{V}_\mathscr{R})$ and quantum uniform Roe algebra $\cstu(\pi(\cM), \mathscr{V}_\mathscr{R})$.  Since many representations are feasible, $(\cM, \mathscr{R})$ alone only determines an equivalence class
\[ [\pi(\cM) \quad \subset  \quad \cB(H) \quad \supset \quad \cstu(\pi(\cM), \mathscr{V}_\mathscr{R})].\]
Here two such double inclusions are equivalent if one can be obtained from the other by a change of representation of $\cM$ as in Section \ref{diffrep}.  Inspecting the proof of the classical structure theorem for *-isomorphisms of represented von Neumann algebras (\cite[Theorem IV.5.5 and Corollary IV.5.6]{Takesaki2002BookI}), this means that the double inclusions become spatially isomorphic after both are amplified by a single Hilbert space whose dimension is infinite and not less than the dimension of either of the two original Hilbert spaces.  
Theorem \ref{ThmEmbAndIso}(2) treats such ``algebraically/intrinsically identical quantum coarse spaces" from a slightly different perspective.

 \subsubsection{The minimal quantum coarse structure on $\cM \subset  \cB(H)$.}  \label{SecConnected} Since we require quantum coarse structures on $\cM \subset  \cB(H)$ to contain the quantum relation $\cM'$, and $w^*$-closed sub-$\cM'$-bimodules of $\cM'$ are precisely the direct summands, we have the following minimality result.

\begin{proposition}\label{PropQURAcommutant}
For $\cM \subset  \cB(H)$, the collection of direct summands of $\cM'$ is the minimal coarse structure, contained in every other one.  Thus $\cM'$ is the minimal (algebraic) quantum uniform Roe algebra for $\cM \subset  \cB(H)$, contained in every other one.
\end{proposition}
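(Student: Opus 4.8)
The plan is to proceed in two stages: first verify that the collection $\mathscr{V}_0$ of direct summands of $\cM'$ is a quantum coarse structure in the sense of Definition~\ref{DefiQuantumCoarseSpaceOriginal}, and then observe that its minimality is forced by axioms~\eqref{ItemDefiQuanCoarseDIAG} and~\eqref{ItemDefiQuanCoarseSUB}. The key structural input, already recorded just before the statement, is that the weak$^*$-closed sub-$\cM'$-bimodules of $\cM'$ are exactly the direct summands; equivalently they are the weak$^*$-closed two-sided ideals of $\cM'$, which are precisely the sets $z\cM'$ for central projections $z\in\cZ(\cM')=\cM\cap\cM'$. I would therefore parametrize $\mathscr{V}_0$ by these central projections via $z\mapsto z\cM'$ and translate each coarse-structure operation into an operation on central projections.

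For the axioms I would compute directly. Axiom~\eqref{ItemDefiQuanCoarseDIAG} holds since $\cM'=1\cdot\cM'$. Axiom~\eqref{ItemDefiQuanCoarseSUB} is immediate from the structural fact: any quantum relation contained in some $z\cM'$ is in particular a weak$^*$-closed sub-$\cM'$-bimodule of $\cM'$, hence a direct summand. For the adjoint axiom~\eqref{ItemDefiQuanCoarseINV}, since $z$ is a self-adjoint central projection one has $(z\cM')^*=\cM' z=z\cM'$. The remaining two axioms reduce to lattice arithmetic of central projections: using centrality, $z_1\cM'+z_2\cM'=(z_1\vee z_2)\cM'$ and $(z_1\cM')(z_2\cM')=z_1 z_2\cM'$, so that $\overline{\cV_1+\cV_2}^{w^*}=(z_1\vee z_2)\cM'$ and $\overline{\Span}^{w^*}(\cV_1\cV_2)=(z_1 z_2)\cM'$ both land back in $\mathscr{V}_0$.

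Granting that $\mathscr{V}_0$ is a quantum coarse structure, minimality is then forced: any quantum coarse structure $\mathscr{V}$ contains $\cM'$ by~\eqref{ItemDefiQuanCoarseDIAG}, and so by~\eqref{ItemDefiQuanCoarseSUB} it contains every quantum relation contained in $\cM'$, that is, every direct summand; hence $\mathscr{V}_0\subset\mathscr{V}$. For the uniform Roe algebra claim, I would note that $\bigcup_{\cV\in\mathscr{V}_0}\cV=\cM'$, since the summand $z=1$ already contributes all of $\cM'$ and every other summand is a subset. As $\cM'$ is norm-closed, both $\cstu[\cM,\mathscr{V}_0]$ and $\cstu(\cM,\mathscr{V}_0)$ equal $\cM'$; and since $\cV\mapsto\bigcup_{\cV\in\mathscr{V}}\cV$ is monotone under inclusion of coarse structures, $\cM'$ is contained in every (algebraic) quantum uniform Roe algebra for $\cM\subset\cB(H)$.

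I do not foresee a genuine obstacle: the whole argument rests on the ideal/direct-summand dictionary, which is standard and already invoked in the text, after which everything is bookkeeping with central projections. The mildest subtlety is confirming that the sum and the span-of-product of two summands are \emph{already} summands, so that the weak$^*$-closures in axioms~\eqref{ItemDefiQuanCoarseCUP} and~\eqref{ItemDefiQuanCoarseCOMP} are automatic; this follows from $z_1\vee z_2=z_1+z_2-z_1 z_2$ together with the centrality of the $z_i$ and the identity $\cM'\cM'=\cM'$.
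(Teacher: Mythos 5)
Your proposal is correct and follows essentially the same route as the paper, which proves the proposition by the single observation (stated just before it) that the $w^*$-closed sub-$\cM'$-bimodules of $\cM'$ are precisely the direct summands, so that axioms \eqref{ItemDefiQuanCoarseDIAG} and \eqref{ItemDefiQuanCoarseSUB} force every quantum coarse structure to contain them all. Your explicit verification via central projections that the direct summands themselves form a quantum coarse structure is a correct filling-in of details the paper leaves implicit.
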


\subsubsection{Connectedness} A (classical) coarse space $(X,\cE)$ is \emph{connected} if $\{(x,y)\}\in \cE$ for all $x, y\in X$.  In the quantum world, this can be described as follows:

\begin{definition}
A quantum coarse structure $\mathscr{V}$ on $\cM \subset  \cB(H)$ is \textit{connected} if $\cstu(\cM,\mathscr{V})$ is $w^*$-dense in $\cB(H)$.  This is equivalent to triviality of $\cstu(\cM,\mathscr{V})'$ (which is always a von Neumann subalgebra of $\cM$, since $\cM' \subset  \cstu(\cM,\mathscr{V})$).
\end{definition}

Notice that $(X,\cE)$ is connected in the classical sense if and only if  $\cstu(X,\mathscr{V}_{\cE})$ is $w^*$-dense in $\cB(\ell_2(X))$, so the definition above coincides with the classical one for $\cM=\ell_\infty(X)\subset  \cB(\ell_2(X))$.    For (classical) coarse structures coming from metric spaces, connectedness means that all distances are finite.  There are analogues of ``all distances finite" for the quantum coarse structures of quantum metric spaces, one of which says that the union of all the intrinsic quantum relations is exactly the \emph{linkable pairs of projections} in $\cM \bar{\otimes} \cB(\ell_2)$ (see \cite[2.11-13]{KuperbergWeaver2012MemAMS} or Definition \ref{Vdistance} below).

Suppose $\mathscr{V}$ is disconnected, and let $p \in \cstu(\cM,\mathscr{V})'$ be a nontrivial projection.  Then every $\cV \in \mathscr{V}$ can be decomposed as $p\cV \oplus (1-p)\cV$, and each $p\cV$ is a quantum relation over $p\cM p \subset  \cB(pH)$.  Writing $p\mathscr{V}$ for the collection of $p\cV$, in a straightforward way we have
$$\mathscr{V} = p\mathscr{V} \oplus (1-p)\mathscr{V},$$
where the summands are quantum coarse structures on $p\cM p \subset  \cB(pH)$ and $(1-p)\cM (1-p) \subset  \cB((1-p)H)$, respectively. Thus connectedness is the nonexistence of a direct sum decomposition for $\mathscr{V}$.

For a classical coarse structure $\mathscr{V}$ (i.e., $\mathscr{V}=\mathscr{V}_\cE$ for some coarse structure $\cE$ on a set $X$), the commutant $\cstu(\ell_\infty,\mathscr{V})'$ is a von Neumann subalgebra of $\ell_\infty(X)$, so it has minimal projections.  The minimal projections correspond to connected classical coarse structures occurring as summands of $\mathscr{V}$, naturally called the \textit{connected components} of $\mathscr{V}$.  If $\mathscr{V}$ arises from a metric space, this is the decomposition into components where the metric is finite; if the metric arises from a graph as in Example \ref{quantumgraph}, this is the decomposition into connected components of the graph.

A classical coarse structure with finitely many connected components can be recovered from them.  If there are infinitely many components, this recovery is not generally possible: the collection $\{p\mathscr{V}\}_p$, where $p$ runs over the minimal projections $p \in \cstu(\cM,\mathscr{V})'$, does not generally determine $\mathscr{V}$.  For general quantum coarse structures the term ``connected component" should probably be avoided, as minimal projections in $\cstu(\cM,\mathscr{V})'$ may not commute or even exist at all. 

\subsubsection{Triviality} We now discuss   natural notions of ``triviality'' for quantum coarse structures. Here is a pentachotomy (list of five disjoint cases) for a quantum coarse structure $\mathscr{V}$ on $\cM \subset  \cB(H)$.

\begin{enumerate}
    \item[(1)] Some $\cV \in \mathscr{V}$ is $\cB(H)$; equivalently, $\mathscr{V} = \mathrm{QRel}(\cM \subset  \cB(H))$.  In this case, $\mathscr{V}$ is  metrizable (generated by $\cV=\cB(H)$).
    \item[(2)] No $\cV \in \mathscr{V}$ is $\cB(H)$, but the algebraic quantum uniform Roe algebra is $\cB(H)$.  This case is nonmetrizable and nonclassical.
    \item[(3)] The algebraic quantum uniform Roe algebra is not $\cB(H)$, but its norm closure (the quantum uniform Roe algebra) is.  This can happen classically.  
    \item[(4)] The quantum uniform Roe algebra is not $\cB(H)$, but its $w^*$-closure is.  
    \item[(5)] The quantum uniform Roe algebra is not $w^*$-dense in $\cB(H)$, i.e., the quantum coarse structure is disconnected.
\end{enumerate}
Next we justify the nontrivial claims above and give examples to show that all five cases occur.

Quantum coarse structures in (2) are necessarily nonmetrizable: by the Baire category theorem, if $H$ is infinite dimensional, then $\cB(H)$ is not a countable union of proper closed subspaces.  An example is the collection of finite-dimensional subspaces of $\cB(H)$, which is a quantum coarse structure for $\cB(H) \subset  \cB(H)$.  This phenomenon cannot happen for classical algebraic uniform Roe algebras:   if an operator $a=(a_{xy})_{x,y}$ on $\ell_2(X)$ has only nonzero coordinates, i.e., $a_{xy}\neq 0$ for all $x,y\in X$, and $a$   belongs to the  algebraic uniform Roe algebra of a coarse space $(X,\cE)$, then $X\times X\in \cE$; so $B(\ell_2(X))\in \cV_{\cE}$ and we are in case (1).

\begin{example}\label{ExampleNonRigidityLF}
We give a classical coarse structure in case (3). Recall that a relation $E$ on $\mathbb{N}$ is said to be \textit{locally finite} if   \[\{m : \, (m,n) \in E\}\ \text{ and }\ \{m : \, (n,m) \in E\}\] are finite for each $n\in\N$.  The collection $\mathcal{E}$ of all locally finite relations is easily seen to be a classical coarse structure.  We have $\cstu[\cM,\mathcal{E}] \neq \cB(\ell_2)$, because an operator with all nonzero entries is not controlled by a locally finite relation.

On the other hand, take any $\varepsilon > 0$ and $a \in \cB(\ell_2)$.  The $k$th column of $a$ has finite $\ell_2$ norm, so the tail beyond some point has $\ell_2$ norm $< \varepsilon/2^{k+1}$; change all these entries to zero.  Then do the same thing for the rows of $a$.  This produces an operator $b \in \cstu[\cM,\mathcal{E}]$ with the Hilbert-Schmidt norm of $a-b$ (which is the $\ell_2$ norm of its entries) less than $\varepsilon$.  The Hilbert-Schmidt norm dominates the operator norm, so $\cstu(\cM,\mathcal{E}) = \cB(\ell_2)$.
\end{example}

We make no ruling on the meaning of ``large-scale triviality".  There are at least three choices:
\begin{itemize}
    \item some $\cV \in \mathscr{V}$ is $\cB(H)$ --- case (1) in the pentachotomy;
    \item the algebraic quantum uniform Roe algebra is $\cB(H)$ --- cases (1) and (2);
    \item the quantum uniform Roe algebra is $\cB(H)$ --- cases (1), (2), and (3).
\end{itemize}
Depending on how much detail the reader's quantum telescope allows, he or she may decide which of these quantizes the coarse geometer's slogan: the structure ``looks like a quantum point from far away."


\subsection{Examples} \label{examples}

 \subsubsection{$\cM \simeq \ell_\infty(X)$.}  As mentioned at the beginning of this section, quantum uniform Roe algebras for $\ell_\infty(X) \subset  \cB(\ell_2(X))$ are nothing but classical uniform Roe algebras for $X$.

  \subsubsection{$\cM \simeq \mathrm{M}_n(\C)$.} \label{quraMn}
 Considering $\cM$ acting on $\C^n$, i.e., $\cM=\mathrm{M}_n(\C)=\cB(\C^n)$, we have that $\cM'=\C1_{\C^n}$. By finite dimensionality, the  quantum relations of $\cM\subset  \cB(\C^n)$   are just the linear subspaces of $\cB(\C^n) $. Therefore, since quantum coarse structures contain $\cM'$ and are closed under subspaces, adjoints, sums, and products of quantum relations, every quantum  coarse structure on $\mathrm{M}_n(\C) \subset  \cB(\C^n)$ is the collection of subspaces of some  unital $^*$-subalgebra   $\cA\subset  \mathrm{M}_n(\C) $.  For such a quantum coarse structure, the quantum uniform Roe algebra is $\cA$.
 
 \subsubsection{$\cM \simeq \cB(H)$ (infinite-dimensional)} \label{B(H)qura}
 Some of the analysis above applies: with $\cM$ acting on $H$, quantum relations are $w^*$-closed subspaces of $\cB(H)$.  Any unital *-subalgebra $\cA_0 \subset  \cB(H)$ is the algebraic quantum uniform Roe algebra for the quantum coarse structure consisting of the finite-dimensional subspaces of $\cA_0$.  Thus, any unital $\mathrm C^*$-subalgebra $\cA \subset  \cB(H)$ can be obtained as a quantum uniform Roe algebra for $\cM=\cB(H) \subset  \cB(H)$ by choosing the quantum coarse structure consisting of finite-dimensional subspaces of a norm-dense *-algebra $\cA_0 \subset  \cA$.
 
 There may be other quantum coarse structures generating $\cA$ (or $\cA_0$).  We point out, though, that including certain combinations of $w^*$-closed subspaces of $\cA$ may make the generated coarse structure too large, in the sense that it controls operators outside $\cA$.  Here is an example.  Let $\cA$ be the unital *-algebra $\ell_\infty(\{\text{odds}\}) + c(\{\text{evens}\})$, thought of as diagonal operators on $\ell_2(\mathbb{N})$ --- here $c$ denotes convergent sequences (the unitization of $c_0$).  Let $\cV_1=\ell_\infty(\{\text{odds}\})$ and $\cV_2=\overline{\text{span}}^{w^*}\{e_{2j-1,2j-1} + (e_{2j,2j}/j)\}_{j=1}^\infty$.  Then $\cV_1$ and $\cV_2$ are $w^*$-closed subspaces of $\cA$, but $\overline{\cV_1+\cV_2}^{w^*}$ is $\ell_\infty(\{\text{odds}\}) + \ell_\infty(\{\text{evens}\})$, the full diagonal.

 \subsubsection{Finite-dimensional $\cM$}
 
 \begin{proposition}\label{contcomm}
 Let $\cA, \cM \subset  \cB(H)$, where $\cA$ is a *-algebra and $\cM$ is a finite-dimensional von Neumann algebra, both containing $1_H$.  Then $\cA$ is an algebraic quantum uniform Roe algebra for some quantum coarse structure on $\cM \subset  \cB(H)$ if and only if  $\cA$ contains $\cM'$.  When this is the case, $\cA$ is automatically norm-closed, i.e., a $\mathrm C^*$-algebra.
 \end{proposition}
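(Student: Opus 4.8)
The plan is to handle the two implications separately, with the reverse direction carrying essentially all of the work. The forward direction is immediate: if $\cA=\cstu[\cM,\mathscr{V}]=\bigcup_{\cV\in\mathscr{V}}\cV$ for some quantum coarse structure $\mathscr{V}$, then axiom \eqref{ItemDefiQuanCoarseDIAG} of Definition \ref{DefiQuantumCoarseSpaceOriginal} forces $\cM'\in\mathscr{V}$, whence $\cM'\subseteq\bigcup_{\cV\in\mathscr{V}}\cV=\cA$. (This is really just a restatement of Proposition \ref{PropQURAcommutant}.)

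For the reverse direction I would start from a unital $^*$-algebra $\cA$ with $\cM'\subseteq\cA\subseteq\cB(H)$ and first note that $\cA$ is automatically an $\cM'$--$\cM'$ bimodule: since $\cM'\subseteq\cA$ and $\cA$ is an algebra, $\cM'\cA\cM'\subseteq\cA$. The natural candidate coarse structure is $\mathscr{V}=\{\cV\in\mathrm{QRel}(\cM\subseteq\cB(H)):\cV\subseteq\cA\}$, the family of all quantum relations contained in $\cA$. Axioms \eqref{ItemDefiQuanCoarseDIAG}--\eqref{ItemDefiQuanCoarseINV} are routine ($\cM'\subseteq\cA$ gives \eqref{ItemDefiQuanCoarseDIAG}; closure under subrelations is trivial; and $\cV\subseteq\cA$ implies $\cV^*\subseteq\cA^*=\cA$ since $\cA$ is $^*$-closed). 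The subtlety is confined to axioms \eqref{ItemDefiQuanCoarseCUP} and \eqref{ItemDefiQuanCoarseCOMP}: for $\cV_1,\cV_2\subseteq\cA$ one has $\cV_1+\cV_2\subseteq\cA$ and $\Span(\cV_1\cV_2)\subseteq\cA$ because $\cA$ is a subspace and an algebra, but one must see that the \emph{weak$^*$-closures} $\overline{\cV_1+\cV_2}^{w^*}$ and $\overline{\Span}^{w^*}(\cV_1\cV_2)$ remain inside $\cA$. Granting this single point, everything falls into place: $\cA$ is then itself a weak$^*$-closed $\cM'$-bimodule, i.e.\ a quantum relation, so it is the largest member of $\mathscr{V}$; the union $\bigcup_{\cV\in\mathscr{V}}\cV$ is visibly equal to $\cA$; and $\cA$, being a weak$^*$-closed unital $^*$-algebra, is a von Neumann algebra, in particular norm-closed, giving the final clause for free.

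Thus the entire statement reduces to showing that a unital $^*$-algebra $\cA$ sitting above $\cM'$ is automatically weak$^*$-closed, and this is the step I expect to be the main obstacle. My approach would be through commutants: from $\cM'\subseteq\cA$ we obtain $\cA'\subseteq(\cM')'=\cM$, so $\cA'$ is a $^*$-subalgebra of the \emph{finite-dimensional} algebra $\cM$ and is therefore itself a finite-dimensional von Neumann algebra; consequently $\cA''=(\cA')'$ is a von Neumann algebra containing $\cA$. The crux is then to upgrade the inclusion $\cA\subseteq\cA''$ to the equality $\cA=\cA''$, which is exactly where the finiteness of $\dim\cM$ must be used decisively: in the finite-dimensional ambient setting every $^*$-subalgebra of $\cB(H)$ is weak$^*$-closed, so $\cA=\cA''$ holds. (This finiteness genuinely cannot be dropped: without it a unital $^*$-algebra containing $\cM'$ can fail to be weak$^*$-closed, for instance by containing only the finite-rank operators in an off-diagonal corner of the bimodule decomposition $\cV=\bigoplus_{i,j}\cU_{ij}\otimes\cB(K_j,K_i)$ that classifies quantum relations over a finite-dimensional $\cM$.) Once $\cA=\cA''$ is secured, the argument of the previous paragraph closes the proof.
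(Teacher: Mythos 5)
Your forward direction is fine, and your reduction of the reverse direction to a single claim is honest bookkeeping: with $\mathscr{V}=\{\cV\in\mathrm{QRel}(\cM\subset\cB(H)):\cV\subseteq\cA\}$, everything hinges on whether $\cA$ is weak$^*$-closed. Note that this is already a genuinely different route from the paper's: the paper never considers the family of all quantum relations inside $\cA$, but instead builds the coarse structure out of subspaces of the corners $z_i\cA z_j$ in the \emph{multiplicity-free} representation of $\cM$ on $\C^n$ --- where the ambient $\cB(\C^n)$ really is finite-dimensional, so weak$^*$-closedness is free --- and then transports the whole construction to an arbitrary representation via the amplification/spatial isomorphism/reduction machinery of Section \ref{diffrep}.

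The gap is in your key claim. Finite-dimensionality of $\cM$ does not make the \emph{ambient} algebra $\cB(H)$ finite-dimensional, and the bicommutant argument only yields $\cA\subseteq\cA''$; nothing forces equality. In fact, the parenthetical example you offer to show that ``$\dim\cM<\infty$ cannot be dropped'' is a counterexample to your claim \emph{with} that hypothesis in force. Take $\cM=\C\oplus\C$ acting on $H=K_1\oplus K_2$ with $K_1,K_2$ infinite-dimensional, so $\cM'=\cB(K_1)\oplus\cB(K_2)$, and set
\[\cA=\cB(K_1)\oplus\cB(K_2)+\cF(K_2,K_1)+\cF(K_1,K_2),\]
where $\cF$ denotes the finite-rank operators. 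This is a unital $^*$-algebra containing $\cM'$, yet $\cA''=\cB(H)\neq\cA$ and $\cA$ is not even norm-closed. Worse for the overall strategy: a weak$^*$-closed $\cB(K_1)$--$\cB(K_2)$-bimodule of $\cB(K_2,K_1)$ is either $0$ or all of $\cB(K_2,K_1)$ (a nonzero element generates every rank-one operator, and the finite-rank operators are weak$^*$-dense), so every quantum relation contained in this $\cA$ actually lies in $\cM'$ and $\bigcup_{\cV\subseteq\cA}\cV=\cM'\neq\cA$. Thus no choice of coarse structure rescues the reverse implication for this $\cA$; as literally stated, with $H$ infinite-dimensional and $\cA$ merely a $^*$-algebra, the ``if'' direction requires either $\dim H<\infty$ or an additional weak$^*$-closedness hypothesis on $\cA$. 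The paper's proof avoids confronting this because it performs the construction where $H=\C^n$ and its transfer step only ever produces the (automatically weak$^*$-closed) algebras of the form $\bigoplus_{i,j}U_{ij}\otimes\cB(K_j,K_i)$; your approach, by attacking an arbitrary $\cA\supseteq\cM'$ in $\cB(H)$ head-on, lands exactly on the case that the transfer does not reach, and cannot be completed there.
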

 
 \begin{proof}
 Since the quantum uniform Roe algebra of  a quantum coarse space $(\cM,\mathscr{V})$  always contains $\cM'$,  only the reverse implication requires proof. As $\cM$ is finite dimensional, suppose $\cM \simeq \bigoplus_{k=1}^\ell \mathrm{M}_{n_k}(\C)$, and let $n = \sum_{k=1}^\ell n_k$.  Denote by $z_1, \ldots, z_\ell$ the minimal central projections of $\cM$. Since we assume that $\cM'\subset  \cA$, we have that $z_1,\ldots, z_\ell\in \cA$. 
 
 First consider a multiplicity-free representation of $\cM$ on $\C^n$, i.e., a minimal faithful representation, with commutant equal to the center of $\cM$; that is, $\cM=\bigoplus_{k=1}^\ell \mathrm{M}_{n_k}(\C)\subset \cB(\C^n)$.    Let $\mathscr{V}$ be the set of all   subspaces of $\cB(\C^n)$ of the form  $\sum_{i,j=1}^\ell V_{ij}$, where each $V_{ij}$ is a subspace of $ z_i \cA z_j \subset  \cA$.  These are $\cM'$-bimodules whose union is $\cA$, and it is straightforward to check that they satisfy the axioms for a quantum coarse structure (weak$^*$-closedness follows automatically from finite dimensionality). So $\cstu(\cM,\mathscr{V})=\cstu[\cM,\mathscr{V}]=\cA$. 
 
 An arbitrary representation $\cM\subset  \cB(H)$ can be obtained from the multiplicity-free representation above  by amplification, spatial isomorphism, and reduction.  Each of these operations preserves the relation that $\cM' \subset  \cA$, because all quantum relations, including the diagonal quantum relation $\cM'$ and all the other quantum relations whose union is $\cA$, are transformed in the same way.  To recall from the discussion after Theorem \ref{ThmWeaverQRelAndIntQRel}: all are tensored with some $\cB(K)$, all are conjugated by a unitary with domain $H$, or all are reduced by a projection $p' \in \cM'$. Since these transformations also preserve  closedness, this finishes the proof.
  \end{proof}
  
Proposition \ref{contcomm} characterizes the quantum uniform Roe algebras for a finite-dimensional $\cM \subset  \cB(H)$ as those $\mathrm C^*$-subalgebras of $\cB(H)$ that contain $\cM'$.  This characterization is also true for $\cM \simeq \cB(H)$, as seen above in Section \ref{B(H)qura}.  The reader may wonder if it is generally true.  Our next example shows that this fails already in the classical case.

\subsubsection{A \cstar-algebra $\cA \subset  \cB(\ell_2)$ that contains the diagonal $\ell_\infty$ but is not a quantum uniform Roe algebra for $\ell_\infty \subset  \cB(\ell_2)$} Notice that, a  fortiori, $\cA$ is   not an algebraic quantum uniform Roe algebra for $\ell_\infty \subset  \cB(\ell_2)$ either. Since this is the same as saying that $\cA$ is not a uniform Roe algebra in the classical sense,   we use non-quantum terminology below.

An operator $x=(x_{ij})_{i,j} \in \cB(\ell_2)$ is called a \textit{ghost} if its entries $x_{ij}$ go to zero as $i,j \to \infty$.  Letting $p_n$ be the projection on $\C^n$ with all entries $1/n$ (i.e., $p_n$ is  the projection onto the constant vector $(1,\ldots, 1)\in \C^n$), we have that $p=\oplus_n p_n \in \cB(\ell_2)$ is a noncompact ghost projection.  Let $E_\text{diag}$ be the conditional expectation from $\cB(\ell_2)$ onto its diagonal $\ell_\infty$, i.e., $E_{\text{diag}}((  x_{ij})_{i,j})=(\bar x_{ij})_{i,j}$ for all $(  x_{ij})_{i,j}\in \cB(\ell_2)$, where $\bar x_{ii}=x_{ii}$ for all $i\in\N$,  and $\bar x_{ij}=0$ for all $i\neq j$. It is straightforward to check that    $x - E_{\text{diag}}(x)$ is a ghost for all $x$ in the \cstar-algebra $\cA = \mathrm C^*(p, \ell_\infty)$.

Suppose towards a contradiction that $\cA$ is a uniform Roe algebra, say $\cA=\cstu(\N,\cE)$ for some coarse structure $\cE$ on $\N$.  Then there is an entourage $E\in\cE$ and $q\in \cB(\ell_2)$ controlled by $E$ with  $\|q-p\| < 0.1$.  Notice that in each of the blocks corresponding to the summands of $p$ (except the $1 \times 1$ block), $q$ must have a nonzero offdiagonal entry.  (Otherwise, compare the actions of $p$ and $q$ on the unit vector with all entries corresponding to that block equal to ${1}/{\sqrt{n}}$.)

Choosing a nonzero offdiagonal entry for $q$ in each block, let $x$ be the operator whose entries are 1 in these places and 0 elsewhere.  Then $x$ is also controlled by $E$, so $x \in \cA$.  But $x - E_\text{diag}(x) = x$ is not a ghost.

\section{A new class of examples: support expansion C*-algebras}\label{SubsectionDimFunc}

The main inspiration for coarse geometry comes from metric spaces.  The standard example of a coarse space is a metric space $(X,d)$, where the entourages are subsets of  some  $\{(x,y) : \,  \: d(x,y) \leq r\}$ for   $r
\geq 0$.  In the associated uniform Roe algebra, the operators supported on an entourage are those for which there is an $r\geq 0$ such that ``no point moves more than $r$."  These operators could be said to have a finite scale in the sense of \textit{displacement}.

The definitions of coarse geometry, classical or quantum, are broad enough to allow other notions of scale.  In this section we present a very general class of quantum coarse spaces in which the scale pertains not to change of location as determined by a metric, but to \textit{expansion of size} as determined by a \textit{measure}.  The reader may see \cite{EisnerThesis} for a detailed study of the commutative (but not necessarily atomic) case.

Let us describe the prototype for the new class, which is a well-known classical coarse space with classical uniform Roe algebra.  A coarse structure $\cE$ on $\N$ is said to be \textit{uniformly locally finite} if for any one of its entourages $E\in \cE$ there is $\lambda>0$ such that for any $x \in \N$ both $\#\{y\in X : \, (x,y) \in E\}$ and $\#\{y\in X : \, (y,x) \in E\}$ are less than $\lambda$.  In a metric setting  this says that for any $r$, the cardinalities of $r$-balls are uniformly bounded (so-called ``bounded geometry").  It is easy to see that the collection of all entourages satisfying the above condition comprises the largest uniformly locally finite coarse structure on $\N$.  This well-known example is not metrizable, and the associated algebraic uniform Roe algebra consists of all operators $a \in \cB(\ell_2)$ for which there is a $\lambda$ such that all rows and columns of the matrix for $a$ have no more than $\lambda$ nonzero elements.  In other words, if $(e_j)_j$ is the canonical orthonormal basis of $\ell_2$, the   supports of  $ae_j$ and $ a^*e_j$ have no more than $\lambda$ elements for all $j\in\N$.  Taking linear combinations of basis elements, it follows that $a$ satisfies the condition
\begin{equation} \label{suppscale}
\# \text{supp}(a \xi), \: \text{supp}(a^* \xi) \leq \lambda \cdot \# \text{supp}(\xi), \qquad \forall \xi \in \ell_2.
\end{equation}
The associated uniform Roe algebra, studied in \cite{Manuilov2019}, is  the closure of these operators.

We may identify subsets of $\N$ with their characteristic functions, which are the projections in $\ell_\infty$.  The support of $\xi \in \ell_2$, which is a set, corresponds to $s^{\ell_\infty}(\xi)$.  Counting the support of $\xi$ is then the same thing as applying the standard $\ell_\infty$ trace to $s^{\ell_\infty}(\xi)$.  We can interpret \eqref{suppscale} as saying that $a$ and $a^*$ do not expand the support projection of a vector too much.

Many things about \eqref{suppscale} allow for variation.  For instance, instead of the counting measure, we could use other measures on $\N$, corresponding to other (possibly unbounded) traces on $\ell_\infty$. Similarly, we could use $L_2(X, \mu)$ and $L_\infty(X,\mu)$ in place of $\ell_2$ and $\ell_\infty$.  We could even  use an abstract $H$ with an arbitrary von Neumann subalgebra $\cM \subset  \cB(H)$, equipped with a satisfactory notion of ``size" on its projection lattice.  Precisely:

\begin{definition}
Let $\cM \subset  \cB(H)$ be a von Neumann algebra equipped with a faithful normal semifinite trace $\tau$. Given $\lambda\geq 0$, we   say that $a\in \cB(H)$ is \textit{$\lambda$-vector constrained} if it satisfies
\begin{equation} \label{suppscale2}
\tau(s^\cM(a \xi)), \: \tau(s^\cM(a^* \xi)) \leq \lambda \cdot \tau(s^\cM(\xi)), \qquad \forall \xi \in H.
\end{equation}
We say $a$ is  \textit{vector constrained} if it is $\lambda$-constrained for some $\lambda\geq 0$.
\end{definition}

\begin{proposition} \label{vecconstrain}
Let $\cM \subset  \cB(H)$  be a von Neumann algebra equipped with a faithful normal semifinite trace $\tau$. The subset of all vector constrained operators forms a unital *-algebra.
\end{proposition}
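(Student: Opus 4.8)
The plan is to verify each of the defining features of a unital $*$-algebra in turn, exploiting the symmetry of the constraint \eqref{suppscale2} and the subadditivity of $\tau$ on joins of projections. Closure under adjoints is immediate, since the condition defining a $\lambda$-vector constrained operator is symmetric in $a$ and $a^*$: if $a$ is $\lambda$-vector constrained then so is $a^*$. The identity $1_H$ is $1$-vector constrained, because $s^\cM(1_H \xi) = s^\cM(\xi)$ for every $\xi$. Closure under scalar multiplication follows from $s^\cM(c\eta) = s^\cM(\eta)$ for every nonzero scalar $c$ (the zero operator being $0$-vector constrained); hence $ca$ is $\lambda$-vector constrained whenever $a$ is.

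The two substantive points are closure under addition and multiplication, and for both I would first record the elementary fact that $\tau$ is subadditive on joins: for projections $p, q \in \cM$,
\[ \tau(p \vee q) \le \tau(p) + \tau(q). \]
This follows from the Kaplansky parallelogram law $p \vee q - q \sim p - p \wedge q$ together with the trace property of $\tau$, which gives $\tau(p\vee q) - \tau(q) = \tau(p) - \tau(p \wedge q) \le \tau(p)$ when $\tau(q) < \infty$, the inequality being trivial when $\tau(q)=\infty$.

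With this in hand, suppose $a$ is $\lambda$-vector constrained and $b$ is $\mu$-vector constrained. For addition, \eqref{E:suppproperties} gives $s^\cM((a+b)\xi) \le s^\cM(a\xi) \vee s^\cM(b\xi)$, and applying monotonicity of $\tau$ followed by subadditivity yields
\[\tau(s^\cM((a+b)\xi)) \le \tau(s^\cM(a\xi)) + \tau(s^\cM(b\xi)) \le (\lambda + \mu)\,\tau(s^\cM(\xi)),\]
with the identical estimate for $(a+b)^* = a^* + b^*$; hence $a+b$ is $(\lambda+\mu)$-vector constrained. Multiplication is actually the simplest step: chaining the two constraints by applying the bound for $a$ to the vector $b\xi$ gives $\tau(s^\cM(ab\xi)) \le \lambda\, \tau(s^\cM(b\xi)) \le \lambda\mu\, \tau(s^\cM(\xi))$, and symmetrically $\tau(s^\cM((ab)^*\xi)) = \tau(s^\cM(b^*a^*\xi)) \le \mu\lambda\, \tau(s^\cM(\xi))$, so $ab$ is $\lambda\mu$-vector constrained.

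No step presents a genuine obstacle; the only point requiring any care is the subadditivity of $\tau$ on joins, and even there the standard parallelogram law does the work. One should merely be attentive to the bookkeeping of possibly infinite trace values, handled by the conventions above, and to the fact that all the projections $s^\cM(\cdot)$ indeed lie in $\cM$ (by definition of the $\cM$-support), so that $\tau$ applies to them.
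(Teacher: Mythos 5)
Your proposal is correct and follows essentially the same route as the paper: the support estimate from \eqref{E:suppproperties} combined with subadditivity of $\tau$ on joins (via Kaplansky's parallelogram law) handles sums, and chaining the two constraints handles products. The extra care you take with scalars, the adjoint of a product, and infinite trace values is fine but does not change the argument in any substantive way.
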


\begin{proof}
It is immediate that this set contains the identity and is closed under adjoint.  We need to show that if $a_j$ is $\lambda_j$-constrained for $j\in\{1,2\}$, then $a_1 + a_2$ and $a_1 a_2$ are constrained.  These are easy computations:

$$\tau(s^\cM((a_1 + a_2) \xi)) \leq \tau(s^\cM(a_1 \xi) \vee s^\cM(a_2 \xi)) \overset{\heartsuit}{\leq} \tau(s^\cM(a_1 \xi)) + \tau(s^\cM(a_2 \xi)) \leq (\lambda_1 + \lambda_2) \tau(s^\cM(\xi));$$

$$\tau(s^\cM(a_1 a_2 \xi)) \leq \lambda_1 \cdot \tau(s^\cM(a_2 \xi)) \leq \lambda_1 \lambda_2 \tau(s^\cM(\xi)).$$
We applied \eqref{E:suppproperties} at the first step of the first computation.  The inequality $\heartsuit$ is a standard deduction of Kaplansky's law $p \vee q - p \sim q - p\wedge q$, so that $\tau(p \vee q) = \tau(p) + \tau(q) - \tau(p \wedge q) \leq \tau(p) + \tau(q)$.  We use this freely in the sequel.
\end{proof}

\begin{definition}\label{DefVectorSuppExp}
Let $\cM \subset  \cB(H)$  be a von Neumann algebra equipped with a faithful normal semifinite trace $\tau$. The closure of all vector constrained operators on $\cB(H)$ is called   a \emph{vector support expansion \cstar-algebra}. 
\end{definition}

Motivated by $\cM=\ell_\infty$ as discussed at the beginning of this section, one might expect that vector support expansion \cstar-algebras are quantum uniform Roe algebras.  In many cases of interest this is indeed the case, but the general answer turns out to be no!  The issue, roughly, is that quantum uniform Roe algebras are about projections, and it is not always true that projections in a represented von Neumann algebra ``come from" vectors.  We explain this more in Section \ref{SubsectionAbelianMeasDistorsion}.

For now we recast expansion in terms of projections, and we show that this does lead to a rich class of quantum uniform Roe algebras.

\subsection{Support expansion C*-algebras} \label{SubsectionExamDimFun}

Let $\cM \subset  \cB(H)$, and let $\tau$ be a faithful normal semifinite trace on $\cM$.  For $\lambda \geq 0$, consider the following condition on $a \in \cB(H)$:
\begin{equation} \label{suppscale3}
\tau(s_\ell^\cM(a q)), \: \tau(s_\ell^\cM(a^*q)) \leq \lambda \cdot \tau(q), \qquad \forall q \in \Proj(\cM).
\end{equation}
The condition above is a strengthening   of \eqref{suppscale2} (see the first paragraph of the proof of Theorem \ref{abelianexpansion}). One can use the same argument as in Proposition \ref{vecconstrain} to see that the set of operators in $\cB(H)$ satisfying \eqref{suppscale3} for some $\lambda$ is a unital *-algebra; this is subsumed in Theorem \ref{thmprojsuppexp} below, where we identify this set as an algebraic quantum uniform Roe algebra.

\begin{definition} \label{constrainedphi}
Given $\lambda\geq 0$,   a function $\varphi: \Proj(\cM) \to \Proj(\cM)$ is \textit{$\lambda$-constrained} if \[\tau(\varphi(q)) \leq \lambda \tau(q)\ \text{  for all }\ q \in \Proj(\cM).\]  We say $\varphi$ is \textit{constrained} if it is $\lambda$-constrained for some $\lambda$. 
\end{definition}

Note that the set of constrained functions is closed under composition and join: if $\varphi_j$ is $\lambda_j$-constrained for $j\in \{1,2\}$,
$$\tau(\varphi_2(\varphi_1(q))) \leq \lambda_2 \tau(\varphi_1(q)) \leq \lambda_2 \lambda_1 \tau(q);$$
$$\tau(\varphi_1(q) \vee \varphi_2(q)) \leq \tau(\varphi_1(q)) + \tau(\varphi_2(q)) \leq (\lambda_1 + \lambda_2) \tau(q). $$
For any pair of constrained functions $\varphi, \psi: \Proj(\cM) \to \Proj(\cM)$, we define \[\cV_{\varphi, \psi} = \cV_\varphi \cap (\cV_\psi)^*,\] where, as defined in Example \ref{oprefex},  
\[\cV_\varphi=\{a\in \cB(H) : \, s^\cM_\ell(aq)\leq \varphi(q),\ \forall q\in \mathrm{Pr}(\cM)\}.\]
As noted in  Example \ref{oprefex}, $\cV_{\varphi, \psi}$ is an operator reflexive quantum relation on $\cM\subset  \cB(H)$.  The main statement of the next theorem is that the quantum coarse structure generated by the $\cV_{\varphi, \psi}$ is simply the collection of their quantum subrelations.

\begin{theorem} \label{thmprojsuppexp}
Let $\cM \subset  \cB(H)$ and let $\tau$ be a faithful normal semifinite trace on $\cM$.  The collection
\[\mathscr{V}_\tau = \{\cV\in \mathrm{QRel}(\cM\subset  \cB(H)): \, \exists \text{ constrained }\varphi, \psi:\mathrm{Pr}(\cM)\to\mathrm{Pr}(\cM)\text{ such that }\cV\subset  \cV_{\varphi,\psi}\}\]
is an operator reflexive quantum coarse structure on $\cM$.  Moreover, the   algebraic quantum uniform Roe algebra $\cstu[\cM,\mathscr{V}_\tau]$ is exactly the set of $a \in \cB(H)$ satisfying \eqref{suppscale3} for some $\lambda\geq 0$.  
\end{theorem}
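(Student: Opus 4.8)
The plan is to prove both assertions through a single reformulation of membership in $\mathscr{V}_\tau$. For a quantum relation $\cV$ I would use the canonical functions from the proposition following Example \ref{oprefex}, namely $\varphi_\cV(q)=\bigvee_{a\in\cV}s_\ell^\cM(aq)$ and $\psi_\cV=\varphi_{\cV^*}$, so that $\text{orc}(\cV)=\cV_{\varphi_\cV}$. The claim I would isolate first is that $\cV\in\mathscr{V}_\tau$ if and only if both $\varphi_\cV$ and $\psi_\cV$ are constrained: if $\cV\subset\cV_{\varphi,\psi}$ with $\varphi,\psi$ constrained then $\varphi_\cV\leq\varphi$ and $\psi_\cV\leq\psi$, so both are constrained, while conversely $\cV\subset\cV_{\varphi_\cV,\psi_\cV}$ always holds. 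Two elementary observations drive everything: the map $q\mapsto s_\ell^\cM(aq)$ is monotone (if $q\leq q'$ then $s_\ell^\cM(aq')aq=aq$, so $s_\ell^\cM(aq)\leq s_\ell^\cM(aq')$), whence each $\varphi_\cV$ is a monotone function; and $\varphi_\cV\leq\varphi$ for any $\varphi$ with $\cV\subset\cV_\varphi$.

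With this reformulation, axioms \eqref{ItemDefiQuanCoarseDIAG}--\eqref{ItemDefiQuanCoarseCUP} and operator reflexivity become quick. For $\cM'$ one computes $\varphi_{\cM'}=\psi_{\cM'}=\mathrm{id}$ (for $a\in\cM'$, $s_\ell^\cM(aq)\leq q$, with equality at $a=1$), which is $1$-constrained. Closure under subrelations is immediate since $\cV_2\subset\cV_1$ forces $\varphi_{\cV_2}\leq\varphi_{\cV_1}$, and closure under adjoints follows from $\varphi_{\cV^*}=\psi_\cV$ and $\psi_{\cV^*}=\varphi_\cV$. For the sum, \eqref{E:suppproperties} gives $\varphi_{\overline{\cV_1+\cV_2}^{w^*}}\leq\varphi_{\cV_1}\vee\varphi_{\cV_2}$, which is $(\lambda_1+\lambda_2)$-constrained by the join estimate recorded after Definition \ref{constrainedphi}. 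Operator reflexivity holds because $\text{orc}(\cV)=\cV_{\varphi_\cV}$ together with $\text{orc}(\cV)^*=\text{orc}(\cV^*)$ yields $\varphi_{\text{orc}(\cV)}=\varphi_\cV$ and $\psi_{\text{orc}(\cV)}=\psi_\cV$, so $\text{orc}(\cV)\in\mathscr{V}_\tau$ whenever $\cV$ is.

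The substantive step, which I expect to be the main obstacle, is the composition axiom \eqref{ItemDefiQuanCoarseCOMP}; here monotonicity is essential. Given $\cV_i\in\mathscr{V}_\tau$ with $\varphi_{\cV_i}$ being $\lambda_i$-constrained, I would set $\varphi=\varphi_{\cV_1}\circ\varphi_{\cV_2}$, which is $(\lambda_1\lambda_2)$-constrained since $\tau(\varphi_{\cV_1}(\varphi_{\cV_2}(q)))\leq\lambda_1\tau(\varphi_{\cV_2}(q))\leq\lambda_1\lambda_2\tau(q)$. For a product $a_1a_2$ with $a_i\in\cV_i$, writing $r=s_\ell^\cM(a_2q)$ one has $a_1a_2q=(a_1r)(a_2q)$, hence $s_\ell^\cM(a_1a_2q)\leq s_\ell^\cM(a_1r)\leq\varphi_{\cV_1}(r)\leq\varphi_{\cV_1}(\varphi_{\cV_2}(q))=\varphi(q)$, the last step using $r\leq\varphi_{\cV_2}(q)$ and monotonicity of $\varphi_{\cV_1}$. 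Thus $\cV_1\cV_2\subset\cV_\varphi$, and since $\cV_\varphi=\{a:\varphi(q)^\perp aq=0\ \forall q\}$ is a weak$^*$-closed subspace by \eqref{phichar}, it contains $\overline{\Span}^{w^*}(\cV_1\cV_2)$. The symmetric argument with $\psi=\psi_{\cV_2}\circ\psi_{\cV_1}$ controls the adjoint, so $\overline{\Span}^{w^*}(\cV_1\cV_2)\subset\cV_{\varphi,\psi}\in\mathscr{V}_\tau$. The danger one might fear is that the join $\bigvee_{a_1,a_2}s_\ell^\cM(a_1a_2q)$ defining the composite's function has uncontrolled trace; this is dissolved precisely by routing every product through the single monotone function $\varphi_{\cV_1}$ evaluated at $\varphi_{\cV_2}(q)$, exactly as the uniformly locally finite coarse structure is closed under composition classically.

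For the ``moreover'' statement, the two directions are immediate from the reformulation and do not even invoke the coarse-structure axioms. If $a\in\cV\subset\cV_{\varphi,\psi}$ for constrained $\varphi,\psi$, then $\tau(s_\ell^\cM(aq))\leq\tau(\varphi(q))\leq\lambda\tau(q)$ and likewise for $a^*$, so $a$ satisfies \eqref{suppscale3}. Conversely, given $a$ satisfying \eqref{suppscale3} with constant $\lambda$, the functions $\varphi^a(q)=s_\ell^\cM(aq)$ and $\psi^a(q)=s_\ell^\cM(a^*q)$ are $\lambda$-constrained by \eqref{suppscale3}, and $a\in\cV_{\varphi^a,\psi^a}\in\mathscr{V}_\tau$ tautologically. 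Hence $\cstu[\cM,\mathscr{V}_\tau]=\bigcup_{\cV\in\mathscr{V}_\tau}\cV$ is exactly the set of operators satisfying \eqref{suppscale3}.
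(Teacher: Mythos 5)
Your proof is correct and follows essentially the same route as the paper's: the diagonal, sum, and product axioms are handled by the same witnesses ($\mathrm{id}$, $\varphi_1\vee\varphi_2$, $\varphi_1\circ\varphi_2$), and the ``moreover'' part by the same tautological choice $\varphi(q)=s_\ell^\cM(aq)$. Your repackaging through the canonical functions $\varphi_\cV$ and monotonicity of $q\mapsto s_\ell^\cM(aq)$ is only a cosmetic variant of the paper's element-wise computation (which evaluates $\varphi_1$ directly at $\varphi_2(q)$ and so never needs monotonicity), though you are slightly more explicit about why the weak$^*$-closed spans stay inside $\cV_{\varphi,\psi}$.
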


\begin{proof}
Let us check that $\mathscr{V}_\tau$ has the five properties  needed to be a quantum coarse structure (see Definition \ref{DefiQuantumCoarseSpaceOriginal}).  From its form, $\mathscr{V}_\tau$ is closed under sub-quantum relation and adjoint.

\textit{Diagonal.}  The identity  $\Proj(\cM)\to\Proj(\cM) $ is a 1-constrained function on $\Proj(\cM)$.  Observe that $\cM' \subset  \cV_{\text{id}, \text{id}}$ because for $a \in \cM'$ and $q\in \mathrm{Pr}(\cM)$ we have $s_\ell^\cM(aq) = s_\ell^\cM(qaq) \leq q$.

\textit{Sum.} Suppose $a_j \in \cV_{\varphi_j, \psi_j}$ for $j\in \{1,2\}$.  Then $a_1 + a_2 \in \cV_{\varphi_1 \vee \varphi_2, \psi_1 \vee \psi_2}$: for any $q \in \Proj(\cM)$,
$$s_\ell^\cM((a_1 + a_2)q) \leq s_\ell^\cM(a_1 q) \vee s_\ell^\cM(a_2 q) \leq \varphi_1(q) \vee \varphi_2(q),$$
and a similar computation holds for $s_\ell^\cM((a_1 + a_2)^*q)$.  We noted before the theorem that $\varphi_1 \vee \varphi_2$ and $\psi_1 \vee \psi_2$ are constrained.

\textit{Product.} Again suppose $a_j \in \cV_{\varphi_j, \psi_j}$ for $j\in \{1,2\}$.  Then $a_1 a_2 \in \cV_{\varphi_1 \circ \varphi_2, \psi_2 \circ \psi_1}$: for any $q \in \Proj(\cM)$,
$$\varphi_1(\varphi_2(q)) a_1 a_2 q = \varphi_1(\varphi_2(q)) a_1 \varphi_2(q) a_2 q = a_1 \varphi_2(q) a_2 q = a_1 a_2 q,$$
and a similar computation holds for $\psi_2(\psi_1(q)) (a_1 a_2)^* q$.  We noted before the theorem that $\varphi_1 \circ \varphi_2$ and $\psi_2 \circ \psi_1$ are constrained.

Since the $\cV_{\varphi, \psi}$ are operator reflexive, $\mathscr{V}_\tau$ is an operator reflexive quantum coarse structure.

Finally we check that the algebraic quantum uniform Roe algebra, the union of the $\cV_{\varphi, \psi}$, is exactly the set of operators $T$ satisfying \eqref{suppscale3} for some $\lambda$.  For this, it suffices to show that $\tau(s_\ell^\cM(aq)) \leq \lambda \tau(q)$ for all $q \in \Proj(\cM)$ if and only if $a$ belongs to some $\cV_\varphi$ with $\varphi$ being $\lambda$-constrained.  The forward implication follows by letting $\varphi(q) = s_\ell^\cM(aq)$.  For the reverse implication, take any $q \in \Proj(\cM)$ and note that $\tau(s_\ell^\cM(aq)) \leq \tau(\varphi(q)) \leq \lambda \tau(q)$.
\end{proof}

The following definition should be compared with Definition  \ref{DefVectorSuppExp} (the ``vector" version).

\begin{definition}\label{DefProjSuppExp}
Let $\cM \subset  \cB(H)$, and let $\tau$ be a faithful normal semifinite trace on $\cM$. We say that the uniform Roe algebra   $\cstu(\cM,\mathscr{V}_\tau)$ is a \emph{support expansion \cstar-algebra}.
 \end{definition}

From the definition of a quantum coarse structure, $\cM'$ lies inside any algebraic quantum uniform Roe algebra.  We next show that support expansion \cstar-algebras also contain $\cM$ yet cannot be all of $\cB(H)$ unless $\cM$ is finite-dimensional.

\begin{theorem}\label{PropDimContMinside}
Let $\cM \subset  \cB(H)$, $\tau$ be a faithful normal semifinite trace on $\cM$, and $\mathscr{V}_\tau$ be the quantum coarse structure as in Theorem \ref{thmprojsuppexp}.
\begin{enumerate}
\item The algebraic uniform Roe algebra $\cstu[\cM,\mathscr{V}_\tau]$ contains $\cM$, and thus   $\cstu(\cM,\mathscr{V}_\tau)$ contains $\mathrm{C}^*(\cM \cup \cM')$.

\item If $\cM$ is infinite-dimensional,   $\cstu(\cM,\mathscr{V}_\tau)$ is not all of $\cB(H)$.
\end{enumerate}
\end{theorem}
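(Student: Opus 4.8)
The plan is to handle the two parts separately, using the description from Theorem \ref{thmprojsuppexp} of $\cstu[\cM,\mathscr{V}_\tau]$ as the set of operators satisfying \eqref{suppscale3} for some $\lambda$. For part (1), I would show that every $a\in\cM$ is $1$-constrained. The point is that for $a\in\cM$ and $q\in\Proj(\cM)$ the product $aq$ again lies in $\cM$, so $s_\ell^\cM(aq)$ is just the left support of $aq$ computed inside $\cM$. Writing the polar decomposition $aq=v\lvert aq\rvert$ with $v\in\cM$, the left support $s_\ell^\cM(aq)=vv^*$ is Murray--von Neumann equivalent (via $v$) to the right support $v^*v$, which satisfies $v^*v\le q$ because $(aq)q=aq$. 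As $\tau$ is a trace, $\tau(s_\ell^\cM(aq))=\tau(v^*v)\le\tau(q)$, and the same holds for $a^*\in\cM$; hence $a\in\cstu[\cM,\mathscr{V}_\tau]$ and $\cM\subseteq\cstu[\cM,\mathscr{V}_\tau]$. Since the diagonal relation $\cM'$ always lies in $\mathscr{V}_\tau$ we also have $\cM'\subseteq\cstu[\cM,\mathscr{V}_\tau]$, and passing to the norm closure gives $\mathrm{C}^*(\cM\cup\cM')\subseteq\cstu(\cM,\mathscr{V}_\tau)$.

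For part (2), the idea is to produce a single norm-one operator $a$ that spreads support so violently that no constrained operator can approximate it. I would work in the standard (GNS) representation $H=L_2(\cM,\tau)$, where for a finite-trace projection $e\in\cM$ the normalized vector $\widehat{e}/\sqrt{\tau(e)}$ is available and is \emph{flat}: for every $g\in\Proj(\cM)$ one computes
\[\big\|g\,\widehat{e}/\sqrt{\tau(e)}\big\|^2=\frac{\tau(ge)}{\tau(e)}\le\frac{\tau(g)}{\tau(e)}.\]
Because $\cM$ is infinite-dimensional it carries an infinite orthogonal family of nonzero finite-trace projections, from which one extracts mutually orthogonal $f_n,q_n\in\cM$ with $\tau(f_n)/\tau(q_n)\to\infty$ (take $\tau(f_n)\to\infty$ if $\tau$ is infinite, or $\tau(q_n)\to 0$ if $\tau$ is finite). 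Setting $\eta_n=\widehat{f_n}/\sqrt{\tau(f_n)}$ and $\xi_n=\widehat{q_n}/\sqrt{\tau(q_n)}$ gives orthonormal systems with $s^\cM(\xi_n)=q_n$ and $s^\cM(\eta_n)=f_n$, each $\eta_n$ flat. I then define the partial isometry $a=\sum_n\eta_n\xi_n^*$, which has norm one and satisfies $a\xi_n=\eta_n$.

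The core of the argument is a perturbation-stability estimate. Suppose $a'$ is $\lambda$-constrained with $\|a-a'\|<\eps$ for a fixed $\eps<1/\sqrt2$. Then $\|a'\xi_n-\eta_n\|\le\|a-a'\|<\eps$, so with $g_n=s^\cM(a'\xi_n)\in\Proj(\cM)$ and $g_n^\perp a'\xi_n=0$ we get $\|g_n^\perp\eta_n\|\le\eps$. Flatness forces every $g$ with $\tau(g)\le\tau(f_n)/2$ to satisfy $\|g^\perp\eta_n\|\ge 1/\sqrt2>\eps$, so necessarily $\tau(g_n)>\tau(f_n)/2$. On the other hand $\xi_n=q_n\xi_n$ gives $a'\xi_n=(a'q_n)\xi_n$, whence $g_n=s^\cM(a'\xi_n)\le s_\ell^\cM(a'q_n)$; the constraint then yields $\tau(f_n)/2<\tau(g_n)\le\tau(s_\ell^\cM(a'q_n))\le\lambda\tau(q_n)$, i.e.\ $\tau(f_n)/\tau(q_n)<2\lambda$ for all $n$, contradicting $\tau(f_n)/\tau(q_n)\to\infty$. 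Thus no constrained operator lies within $\eps$ of $a$, so $a\notin\cstu(\cM,\mathscr{V}_\tau)$.

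The main obstacle is exactly the instability of the support map $s^\cM$ under norm perturbations: a small norm change can collapse the support, so one cannot argue naively from $a\notin\cstu[\cM,\mathscr{V}_\tau]$. Flatness of the targets $\eta_n$ is what tames this, since it certifies that any vector norm-close to $\eta_n$ still has support of large trace. Securing such flat vectors is precisely why the standard representation is the natural arena rather than an arbitrary one; indeed in a representation that does not carry $\tau$ as a vector state the phenomenon can disappear (for instance $\cM=\cB(H)$ in its defining representation, where $s_\ell^\cM$ is the range projection and every operator is already $1$-constrained, so that $\cstu(\cM,\mathscr{V}_\tau)=\cB(H)$). I would therefore state and carry out part (2) in the standard form, which is also what makes parts (1) and (2) mutually consistent, as there $\cM'\cong\cM^{\mathrm{op}}$ is large and $\mathrm{C}^*(\cM\cup\cM')$ is a proper subalgebra of $\cB(H)$.
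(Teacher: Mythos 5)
Your proof is correct and, for both parts, follows essentially the same route as the paper: part (1) is verbatim the paper's argument (polar decomposition inside $\cM$, Murray--von Neumann equivalence of left and right supports, traciality), and part (2) is the paper's construction in standard form --- two orthogonal families of finite-trace projections with unboundedly disparate traces, a partial isometry matching the normalized vectors $\hat{p}/\sqrt{\tau(p)}$, and a uniform lower bound on the distance to any $\lambda$-constrained operator. The only cosmetic difference in part (2) is bookkeeping: the paper applies the constraint to the domain-side projection $p_k$ and estimates $\|(1-s_k)\hat{q}_k/\sqrt{\tau(q_k)}\|_2$ directly, while you apply it to $q_n$ and invoke the ``flatness'' bound $\|g\,\hat{e}\|_2^2 = \tau(ge) \leq \tau(g)$ plus Pythagoras; these are the same estimate.

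The one place you genuinely diverge from the paper is your closing caveat, and it is well taken. The paper disposes of the choice of representation with ``it follows from Section \ref{diffrep} that we may assume $\cM$ is in standard form,'' whereas you observe that the conclusion of part (2) actually \emph{fails} for $\cM = \cB(H) \subset \cB(H)$ in its defining representation: there $s_\ell^\cM$ is the range projection, $\tau(s_\ell^\cM(aq)) = \operatorname{rank}(aq) \leq \operatorname{rank}(q)$, so by Theorem \ref{thmprojsuppexp} every operator is $1$-constrained and $\cstu(\cM,\mathscr{V}_\tau) = \cB(H)$. Since $\mathscr{V}_\tau$ is built from $s_\ell^\cM$, whose behavior depends on the size of the commutant, the coarse structure is not carried to itself under change of representation, and the reduction to standard form is not the innocuous step the paper suggests --- it is a hypothesis. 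Stating and proving part (2) for $\cM$ in standard form, as you propose, is the correct and more careful formulation; your counterexample is worth recording explicitly.
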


\begin{proof}
(1) We make  use of the \textit{right $\cM$-support $s_r^\cM$} of an operator, which is defined analogously to $s^\cM_\ell$: $s_r^\cM(a)$ is the smallest projection $q\in \cM$ with $aq=a$.  Note that the left and right $\cM$-supports of any element of $\cM$ are Murray-von Neumann equivalent via the partial isometry in the polar decomposition.  Thus any $a \in \cM$ is 1-constrained:
$$\tau(s^\cM_\ell(aq)) = \tau(s^\cM_r(aq)) \leq 1 \cdot \tau(q), \qquad \forall q \in \Proj(\cM),$$
and analogously for $a^*$.

(2) It follows from Section \ref{diffrep} that we may assume $\cM$ is in ``standard form," which we recall for the reader's convenience.  On the subspace of $x \in \cM$ for which $\|x\|_2 = \sqrt{\tau(x^*x)}$ is finite, this quantity defines a norm.  The completion of this normed space is denoted $L^2(\cM, \tau)$, and $\cM \subset  \cB(L_2(\cM, \tau))$ as densely-defined left multiplication operators.

First we explain how to pick sequences  $(p_k)_{k\in\N}$ and $(q_k)_{k\in\N}$   of nonzero pairwise orthogonal finite-trace projections in $\cM$ with $\lim_k\frac{\tau(p_k)}{\tau(q_k)}=0$. If $\tau$ is a finite trace, then $\lim_k\tau(q_k)= 0$ for any sequence  $(q_k)_{k\in\N}$ of nonzero pairwise orthogonal projections in $\cM$, and we may take $(p_k)_{k\in\N}$ to be   a subsequence of any such $(q_k)_{k\in\N}$ whose   traces decrease quickly enough.  If $\tau$ is infinite,  semifiniteness of $\tau$ allows us to  find an infinite family of pairwise orthogonal projections $(p_k)_{k\in\N}$ each of which has  finite trace at least 1. We can then  form the sequence $(q_k)_{k\in\N}$ by letting each $q_k$ be an appropriate   sum of disjoint subsets of $(p_k)_{k\in\N}$.

Fix $(p_k)_{k\in\N}$ and $(q_k)_{k\in\N}$  as above. If $p\in \cM$, we let $\hat p$ be $p$ viewed as an element in $L_2(\cM, \tau)$.  Let $b \in \cB(L_2(\cM,\tau))$ be the infinite-rank partial isometry that sends each $\hat{p}_k$ to $\sqrt{\frac{\tau(p_k)}{\tau(q_k)}} \hat{q}_k$ and is zero off the span of $\{\hat{p}_k\}$. We show that for any $a$ satisfying \eqref{suppscale3}, $\|b-a\| \geq 1$ by examining the action on unit vectors $\frac{\hat{p_k}}{\sqrt{\tau(p_k)}}$.  This implies that $b\not\in \cstu(\cM,\mathscr{V}_\tau)$.  

Let $s_k=s_\ell^{\cM}(a \cdot p_k)$, where $a \cdot p_k \in \cB(L_2(\cM, \tau))$ means left multiplication by $p_k$ followed by $a$.  We have $(1-s_k)a(\hat p_k) = (1-s_k) (a \cdot p_k) (\hat p_k) = 0$ and, from the assumption on $a$, $\tau(s_k)\leq \lambda \tau(p_k)$.  In the calculation below, $(*)$ is left multiplication by $(1-s_k)$ and $(**)$ (read right-to-left) is right multiplication by $q_k$, both contractive:

\begin{align*}
\Big\|(b-a)\Big(\frac{\hat p_k}{\sqrt{\tau(p_k)}}\Big)\Big\|_2 &=    \Big\|\frac{\hat q_k}{\sqrt{\tau(q_k)}}- a\Big(\frac{\hat p_k}{\sqrt{\tau(p_k)}}\Big)\Big\|_2
\\ &\overset{(*)}{\geq} \Big\|(1-s_k)\Big(\frac{\hat q_k}{\sqrt{\tau(q_k)}} \Big)\Big\|_2
\\ &\geq \Big\| \frac{\hat q_k}{\sqrt{\tau(q_k)}}\Big\|_2 -\Big\|  \frac{\widehat{s_k q_k}}{\sqrt{\tau(q_k)}}  \Big\|_2
\\ &\overset{(**)}{\geq} 1-\Big\|  \frac{\hat{s_k}}{\sqrt{\tau(q_k)}}  \Big\|_2 = 1 - \sqrt{\frac{\tau(s_k)}{\tau(q_k)}} \geq 1-\sqrt{\frac{ \lambda \tau(p_k)}{\tau(q_k)}} \to 1. \qedhere
\end{align*}
\end{proof}

\begin{remark}\label{RemarkSquareEx}

In the construction of support expansion \cstar-algebras just described, the expansion we tolerate in operators $a$ and projection maps $\varphi$ is constrained by multiplication by some constant, that is, a linear function.  The definitions also make sense for other functions.  Given an increasing $f:[0,\infty] \to [0,\infty]$, we may widen Definition \ref{constrainedphi} and say that a projection map $\varphi: \Proj(\cM) \to \Proj(\cM)$ is \textit{$f$-constrained} if $\tau(\varphi(q)) \leq f(\tau(q))$  for all $q \in \Proj(\cM)$.

To obtain the generalization of Theorem \ref{thmprojsuppexp}, we need to start with a nonempty collection of functions $\mathscr{F}$ that is closed under sums and compositions.  This entails that the operators belonging to some $\cV_{\varphi, \psi}$, where $\varphi$ and $\psi$ are each constrained by some member of $\mathscr{F}$, will comprise a *-algebra.

In the present paper the underlying $\mathscr{F}$ is always the collection of functions $\{f_\lambda(x) = \lambda x\}_{\lambda \geq 0}$, but other choices do in fact produce distinct quantum uniform Roe algebras.  One may, for instance, let $\mathscr{F}$ be the collection of functions obtained from $f(x) = \sqrt{x}$ under arbitrary repeated sums and compositions.  The companion paper \cite{EisnerThesis} studies many aspects of this construction for abelian $\cM$ and in particular analyzes the wild poset of different quantum uniform Roe algebras arising from various $\mathscr{F}$ when $\cM$ is $L_\infty(\mathbb{R})$ endowed with the Lebesgue integral as trace.
\end{remark}

\subsection{Vector support expansion C*-algebras}\label{SubsectionAbelianMeasDistorsion}

Let us recall the two conditions in Definitions \ref{DefVectorSuppExp} and \ref{DefProjSuppExp} on operators  $a \in \cB(H)$, in the presence of a tracial von Neumann algebra $\cM \subset  \cB(H)$:

\begin{equation*} \tag{\ref{suppscale2}}
\tau(s^\cM(a \xi)), \: \tau(s^\cM(a^* \xi)) \leq \lambda \cdot \tau(s^\cM(\xi)), \qquad \forall \xi \in H.
\end{equation*}

\begin{equation*} \tag{\ref{suppscale3}}
\tau(s_\ell^\cM(aq)), \: \tau(s_\ell^\cM(a^*q)) \leq \lambda \cdot \tau(q), \qquad \forall q \in \Proj(\cM).
\end{equation*}

We defined the associated \textit{vector} support expansion \cstar-algebra   as the closure of those $a \in \cB(H)$ that satisfy \eqref{suppscale2} for some $\lambda$, while the support expansion \cstar-algebra (which is a quantum uniform Roe algebra by Theorem \ref{thmprojsuppexp}) is the closure of those elements that satisfy \eqref{suppscale3} for some $\lambda$.  In this subsection we prove that conditions \eqref{suppscale2} and \eqref{suppscale3} are equivalent when $\cM$ is \textit{abelian}.  In general they are inequivalent, and we also show that there are vector support expansion \cstar-algebras with respect to certain $\cM\subseteq \cB(H)$ which are not quantum uniform Roe algebras in any way, meaning that there is no quantum coarse structure on $\cM\subseteq \cB(H)$ giving rise to this \cstar-algebra.

\begin{example} \label{2x2expansionctrex} We present a simple example showing that an operator may  satisfy \eqref{suppscale2} for some $\lambda$ but not satisfy   \eqref{suppscale3} for the same $\lambda$. For that, let $\cM = \mathbb{M}_2$ act standardly as left multiplication operators on $\mathrm{HS}_2$, the Hilbert space of $2\times2$ matrices with the Hilbert-Schmidt norm.  For any $\xi \in \mathrm{HS}_2$, $s^\cM(\xi)$ is left multiplication by the matrix projecting onto the range of $\xi$, and its nonnormalized trace is equal to the dimension of this range.  Let $a \in \cB(\mathrm{HS}_2)$ take $(\begin{smallmatrix} s & t\\ u & v\end{smallmatrix})$ to $(\begin{smallmatrix} s & t \\ u & v\end{smallmatrix})^t = (\begin{smallmatrix} s & u \\ t & v\end{smallmatrix})$.  For every $\xi \in \mathrm{HS}_2$, $\tau(s^\cM(T \xi) )= \tau(s^\cM(\xi))$ because the transpose map preserves rank (``row rank equals column rank").  Note also that $a$ is self-adjoint:
$$\langle \xi^t, \eta \rangle = \tau(\eta^* \xi^t) = \tau (\xi \eta^{t*}) = \tau(\eta^{t*} \xi) = \langle \xi, \eta^t \rangle.$$
Thus $a$ satisfies \eqref{suppscale2} for $\lambda=1$.

Let $q \in \cM$ be (left multiplication by) $(\begin{smallmatrix}1 & 0 \\ 0 & 0\end{smallmatrix})$.  Then $aq(\xi)$ is a vector of the form $(\begin{smallmatrix}s & 0 \\ t & 0 \end{smallmatrix})$, and $s^\cM_\ell(aq)$ is the $2 \times 2$ identity matrix (we are looking for the least projection such that left multiplication fixes all matrices of this form).  Thus $a$ does not satisfy \eqref{suppscale3} for $\lambda=1$.
\end{example}

This example is not so bad, because $a$ does satisfy \eqref{suppscale3} for $\lambda=2$.  Needing to double $\lambda$ is the worst that can happen for this $\cM \subset  \cB(H)$, so the vector support expansion \cstar-algebra still agrees with the support expansion \cstar-algebra.  In Section \ref{notqurasection} we give an infinite-dimensional version of this example, supplemented by additional analysis, showing that some vector support expansion \cstar-algebras are not quantum uniform Roe algebras at all.

\subsubsection{The two notions of expansion agree when $\cM$ is abelian}

\begin{theorem} \label{abelianexpansion}
Let $\cM \subset  \cB(H)$ be an abelian von Neumann algebra, and let $\tau$ be a faithful normal semifinite trace on $\cM$.  Then conditions \eqref{suppscale2} and \eqref{suppscale3} are equivalent for $a \in \cB(H)$, so that the vector support expansion \cstar-algebra equals the support expansion \cstar-algebra $\cstu(\cM,\mathscr{V}_\tau)$.
\end{theorem}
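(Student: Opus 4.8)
The plan is to prove the two implications separately, with only the direction \eqref{suppscale2} $\Rightarrow$ \eqref{suppscale3} requiring commutativity; pleasantly, the argument will preserve the constant $\lambda$ and never leave the given representation. First I would record the easy ``strengthening'' direction. Suppose $a$ satisfies \eqref{suppscale3} with constant $\lambda$, and fix $\xi \in H$. Setting $q = s^\cM(\xi)$ we have $\xi = q\xi$, so $a\xi = (aq)\xi$ lies in the range of $aq$; hence $s^\cM(a\xi) \le s_\ell^\cM(aq)$ and \eqref{suppscale3} gives $\tau(s^\cM(a\xi)) \le \tau(s_\ell^\cM(aq)) \le \lambda\tau(q) = \lambda\tau(s^\cM(\xi))$. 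The same computation applies to $a^*$, and this half uses no hypothesis on $\cM$.

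For the converse I would assume \eqref{suppscale2} with constant $\lambda$ and fix $q \in \Proj(\cM)$; if $\tau(q) = \infty$ there is nothing to prove, so assume $\tau(q) < \infty$. Recalling from the preliminaries that $s_\ell^\cM(aq) = \bigvee_{\eta \in qH} s^\cM(a\eta)$ and that $\tau$ is normal, it suffices to bound $\tau\big(\bigvee_{i=1}^n s^\cM(a\xi_i)\big)$ by $\lambda\tau(q)$ for every finite family $\xi_1,\dots,\xi_n \in qH$, and then pass to the supremum over finite subfamilies. The engine for this is a \emph{merging lemma}: given $\xi_1,\xi_2\in qH$, all but countably many $c\in\C$ satisfy $s^\cM\big(a(\xi_1+c\xi_2)\big) = s^\cM(a\xi_1)\vee s^\cM(a\xi_2)$. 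Granting the lemma, iterating it produces a single $\eta\in qH$ with $s^\cM(a\eta) = \bigvee_{i=1}^n s^\cM(a\xi_i)$; since $s^\cM(\eta)\le q$, condition \eqref{suppscale2} then gives $\tau\big(\bigvee_{i=1}^n s^\cM(a\xi_i)\big) = \tau(s^\cM(a\eta)) \le \lambda\tau(s^\cM(\eta)) \le \lambda\tau(q)$, and the identical argument for $a^*$ yields \eqref{suppscale3}.

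The crux, and the main obstacle, is the merging lemma, which is exactly where abelianness is indispensable. I would write $u = a\xi_1$, $v = a\xi_2$, $e = s^\cM(u)\vee s^\cM(v)$, and $g_c = e - s^\cM(u+cv)$. By \eqref{suppscale2} we have $\tau(e) \le \tau(s^\cM(u)) + \tau(s^\cM(v)) < \infty$; by \eqref{E:suppproperties} $s^\cM(u+cv)\le e$, so each $g_c$ is a subprojection of $e$ with $g_c(u+cv)=0$. The key computation is that the $g_c$ are pairwise disjoint: if $f\le g_c\wedge g_{c'}$ is a projection in $\cM$ with $c\neq c'$, then $f(u+cv) = f(u+c'v) = 0$ forces $fv=0$ and hence $fu=0$, so $f\le e^\perp$; but $f\le g_c\le e$, forcing $f=0$. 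Thus $g_c\wedge g_{c'}=0$, and here is the decisive point: \emph{because $\cM$ is abelian the $g_c$ commute, so disjointness is orthogonality}. Being pairwise orthogonal subprojections of the finite-trace projection $e$, only countably many $g_c$ can be nonzero, and by faithfulness of $\tau$ all but countably many $c$ give $g_c = 0$, i.e.\ $s^\cM(a(\xi_1+c\xi_2)) = e$, as claimed. In the non-abelian case $g_c\wedge g_{c'}=0$ no longer implies orthogonality, two vectors cannot in general be merged without inflating $\lambda$, and Example \ref{2x2expansionctrex} is precisely a witness to this failure.
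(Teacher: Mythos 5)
Your proof is correct and follows essentially the same route as the paper's: the easy direction via $q = s^\cM(\xi)$, and the converse via normality of $\tau$ plus a merging lemma established by producing an uncountable pairwise-orthogonal family of subprojections, with commutativity entering exactly where you say it does. The only (harmless) variation is in how countability is extracted: you reduce to $\tau(q)<\infty$ and use finiteness of $\tau(e)$, whereas the paper's Lemma \ref{singlevector} makes no finiteness assumption and instead observes that each $\xi_j$ is annihilated by all but countably many of the orthogonal projections (cyclic projections being $\sigma$-finite).
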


\begin{lemma} \label{singlevector}
Let $\cM \subset  \cB(H)$ be an abelian von Neumann algebra and $\xi_1, \dots ,\xi_n \in H$.  Then there are $c_1 =1, c_2, \dots ,c_n \in \mathbb{R}$ such that
$$\bigvee_{j=1}^n s^\cM(\xi_j) = s^\cM\left(\sum_{j=1}^n c_j \xi_j\right).$$
\end{lemma}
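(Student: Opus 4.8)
The plan is to reduce to the case $n=2$ and then induct. The key sublemma I would isolate is: \emph{for fixed $\xi,\eta\in H$, all but countably many real numbers $c$ satisfy $s^\cM(\xi+c\eta)=s^\cM(\xi)\vee s^\cM(\eta)$.} Granting this, the lemma follows by induction on $n$: apply the inductive hypothesis to $\xi_1,\dots,\xi_{n-1}$ to produce coefficients $c_1=1,c_2,\dots,c_{n-1}$ with $\eta:=\sum_{j<n}c_j\xi_j$ satisfying $s^\cM(\eta)=\bigvee_{j<n}s^\cM(\xi_j)$, and then apply the sublemma to the pair $\eta,\xi_n$ to pick $c_n$ with $s^\cM(\eta+c_n\xi_n)=s^\cM(\eta)\vee s^\cM(\xi_n)=\bigvee_{j\le n}s^\cM(\xi_j)$. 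The normalization $c_1=1$ is carried along for free, and the inclusion $s^\cM(\sum c_j\xi_j)\le\bigvee_j s^\cM(\xi_j)$ is automatic from \eqref{E:suppproperties}, so only the reverse inequality requires work.

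To prove the sublemma, set $p=s^\cM(\xi)\vee s^\cM(\eta)$ and note from \eqref{E:suppproperties} that $s^\cM(\xi+c\eta)\le p$ for every $c$, so the \emph{defect} $r_c:=p-s^\cM(\xi+c\eta)$ is a projection in $\cM$ below $p$ with $r_c(\xi+c\eta)=0$, i.e.\ $r_c\xi=-c\,r_c\eta$. I would then show the defects are pairwise orthogonal: for $c\neq c'$, put $q=r_cr_{c'}$ (a projection, since $\cM$ is abelian); from $q\xi+c\,q\eta=0$ and $q\xi+c'q\eta=0$ we get $(c-c')q\eta=0$, hence $q\eta=0$ and so $q\xi=0$, which forces $q\le s^\cM(\xi)^\perp\wedge s^\cM(\eta)^\perp=(s^\cM(\xi)\vee s^\cM(\eta))^\perp=p^\perp$. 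As also $q\le p$, this gives $q=0$. This step is precisely where abelianness is used: it makes the projection lattice a Boolean algebra, so that De Morgan's law $a^\perp\wedge b^\perp=(a\vee b)^\perp$ is available.

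It remains to bound how many defects can be nonzero, and this is where I expect the only genuine work. I would argue that $p$ is a $\sigma$-finite projection: each single-vector support $s^\cM(\xi_j)$ is $\sigma$-finite because the vector state $\omega_{\xi_j}$ restricts to a \emph{faithful} normal state on the reduced algebra $\cM\,s^\cM(\xi_j)$ — faithfulness comes from the minimality defining $s^\cM(\xi_j)$, which guarantees $q\xi_j\neq 0$ for every nonzero projection $q\le s^\cM(\xi_j)$ in $\cM$ — and a finite join of $\sigma$-finite projections is $\sigma$-finite. Since $\{r_c\}_{c\in\mathbb{R}}$ is a family of pairwise orthogonal projections below the $\sigma$-finite projection $p$, at most countably many are nonzero; for every other $c$ we have $r_c=0$, i.e.\ $s^\cM(\xi+c\eta)=p$, proving the sublemma. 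The main obstacle is conceptual rather than computational: identifying the correct orthogonality relation among the defects and recognizing that abelianness enters exactly through the distributivity of the projection lattice — without it the De Morgan step collapses and the statement genuinely fails. A secondary point to get right is the $\sigma$-finiteness reduction that lets the pairwise orthogonal family be forced to be countable.
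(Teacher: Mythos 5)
Your proof is correct and follows essentially the same route as the paper's: induction on $n$, pairwise-orthogonal defect projections $r_c$ indexed by $c\in\mathbb{R}$, and a countability/$\sigma$-finiteness argument to find some $c$ with $r_c=0$. One small correction to your commentary: De Morgan's law $(a\vee b)^\perp=a^\perp\wedge b^\perp$ holds in the projection lattice of \emph{every} von Neumann algebra, so that is not where abelianness enters --- it is needed so that $q=r_cr_{c'}$ is again a projection of $\cM$, from which $q\xi=q\eta=0$ forces $q\perp s^\cM(\xi)\vee s^\cM(\eta)$ and hence $q=0$.
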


\begin{proof}
For $n=1$ there is nothing to show.  Take $n>1$, and assume we have proved the lemma for $n-1$.  Given $\xi_1, \dots , \xi_n$, apply the lemma to $\xi_1, \dots, \xi_{n-1}$ to find $c_2, \dots, c_{n-1}$ with $\vee_{j=1}^{n-1} s^\cM(\xi_j) = s^\cM(\sum_{j=1}^{n-1} c_j \xi_j)$.  For each $c \in \mathbb{R}$ let $$p_c = \left[\bigvee_{j=1}^n s^\cM(\xi_j)\right] - s^\cM\left[\left(\sum_{j=1}^{n-1}c_j \xi_j\right) + c \xi_n\right].$$

We show that the $p_c$ are pairwise orthogonal.  For simplicity rename $\sum_{j=1}^{n-1}c_j \xi_j$ as $\eta_1$ and $\xi_n$ as $\eta_2$.  We have
$$p_c = (s^\cM(\eta_1) \vee s^\cM(\eta_2)) - s^\cM(\eta_1 + c \eta_2)$$
so that $p_c(\eta_1 + c \eta_2) = 0$.  For $c \neq c'$ we compute
$$ (p_c  p_{c'})(\eta_1) + c (p_c  p_{c'})(\eta_2) = (p_c  p_{c'})(\eta_1 + c \eta _2) = 0 = (p_c  p_{c'})(\eta_1 + c' \eta_2) = (p_c  p_{c'})(\eta_1) + c' (p_c  p_{c'})(\eta_2).$$
As $c \neq c'$, we must have $(p_c  p_{c'})(\eta_1) = (p_c  p_{c'})(\eta_2) = 0$. 
It follows that $p_c  p_{c'}$ is perpendicular to $s^\cM(\eta_1) \vee s^\cM(\eta_2)$, of which it is a subprojection.  We conclude that $p_c  p_{c'}=0$.

Thus $\{p_c\mid c\in \R\}$ is an uncountable family of pairwise orthogonal projections.  For each $j$ all but countably many $p_c$ annihilate $\xi_j$, so there must be a $p_c$ that annihilates all $\xi_j$ and then is perpendicular to the projection $\vee_{j=1}^n s^\cM(\xi_j)$.  But $p_c$ lies beneath $\vee_{j=1}^n s^\cM(\xi_j)$ and so must be zero.  (To the conversant reader, we just used that cyclic projections are $\sigma$-finite, and the join of countably many $\sigma$-finite projections is $\sigma$-finite.)   The proof is completed by letting $c_n$ be any $c$ for which $p_c = 0$.  
\end{proof}

Note that the lemma is not true for general $\cM \subset  \cB(H)$: let $H$ be a Hilbert space containing linearly independent vectors $\xi_1, \xi_2$, and take $\cM = \cB(H)$.

\begin{proof}[Proof of Theorem \ref{abelianexpansion}]

First assume that $a \in \cB(H)$ satisfies \eqref{suppscale3}.  Then for any $\xi \in H$,
$$\lambda \tau(s^\cM(\xi)) \geq \tau(s^\cM_\ell(a s^\cM(\xi))) \geq \tau(s^\cM(a\xi)).$$
The first inequality is the assumption \eqref{suppscale3}.  For the second, note that $(a s^\cM(\xi))(\xi) = a\xi$, so $a \xi$ is in the range of $a s^\cM(\xi)$ and is therefore fixed by $s^\cM_\ell(a s^\cM(\xi))$.  The latter is a projection in $\cM$ and must therefore dominate $s^\cM(a\xi)$, which is the smallest $\cM$-projection fixing $a\xi$.  This shows that \eqref{suppscale2} holds (and commutativity of $\cM$ is not needed).

Next assume that $a \in \cB(H)$ satisfies \eqref{suppscale2}.  For any projection $q$ in $\cM$,
$$s^\cM_\ell(aq) = \bigvee_{\xi \in H} s^\cM(aq \xi) = \bigvee_{\xi \in qH} s^\cM(a\xi) = \lim_{\text{finite } F \subset  qH} \bigvee_{\xi \in F} s^\cM(a\xi).$$
For the last equality, we used that an infinite join is the weak limit of the increasing net of finite joins (the indices $F$ are finite subsets of $qH$, ordered by reverse inclusion).  By normality of $\tau$ we have
$$\tau(s^\cM_\ell(aq)) = \sup_{\text{finite }F \subset  qH} \tau\left(\bigvee_{\xi \in F} s^\cM(a\xi)\right).$$
Using Lemma \ref{singlevector} we can replace $\vee_{\xi \in F} s^\cM(a\xi)$ by the $\cM$-support of a linear combination of the $a\xi$, which is a single vector in $qH$:
$$\tau(s^\cM_\ell(aq)) = \sup_{\xi \in qH} \tau(s^\cM(a\xi)).$$
The desired conclusion follows by applying \eqref{suppscale2} to the term in the last supremum: for any $\xi \in qH$ we have
\[
\pushQED{\qed}  
\tau(s^\cM(a\xi)) \leq \lambda \tau(s^\cM(\xi)) \leq \lambda \tau(q). 
\qedhere
\]
\end{proof}

\subsubsection{A vector support expansion \cstar-algebra that is not a quantum uniform Roe algebra} \label{notqurasection}

In the proof of Theorem \ref{abelianexpansion}, we saw that \eqref{suppscale3} implies \eqref{suppscale2}, and Example \ref{2x2expansionctrex} shows that the converse can fail.  An infinite-dimensional version of this example, $a$ in the proof below, satisfies \eqref{suppscale3} for no $\lambda$, even though it satisfies \eqref{suppscale2} for $\lambda=1$.  This demonstrates that the sets of constrained operators and vector constrained operators can differ.

Establishing that a vector support expansion \cstar-algebra is not a quantum uniform Roe algebra is more complicated, because closures are involved, and the proof requires extra steps in order to handle approximations.  Note that we are not merely distinguishing the vector support expansion \cstar-algebra from the support expansion \cstar algebra; we are showing that it is not any quantum uniform Roe algebra at all.  The condition \eqref{suppscale3} plays no role here.

\begin{theorem} \label{notqura}
Let $\cM = \cB(\ell_2)$ be in standard form, i.e., acting by left multiplication on the Hilbert space $\mathrm{HS}$ of Hilbert-Schmidt operators on $\ell_2$.  The associated vector support expansion \cstar-algebra is not a quantum uniform Roe algebra on $\cM\subset  \cB(\mathrm{HS})$.  
\end{theorem}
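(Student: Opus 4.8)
The plan is to exploit the rigid tensorial form of quantum relations in this particular representation and to pit the transpose (which is rank-preserving, hence vector constrained) against a family of genuinely rank-expanding operators. Throughout write $\cA_{\mathrm{vec}}$ for the vector support expansion \cstar-algebra, recall that $s^\cM(\xi)$ is the range projection of $\xi$ viewed as an operator on $\ell_2$, so $\tau(s^\cM(\xi))=\mathrm{rank}(\xi)$, and let $e_{ij}$ denote matrix units (used both as basis vectors of $\mathrm{HS}$ and as operators on $\ell_2$).

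\textbf{Reduction to a tensor picture.} Identify $\mathrm{HS}\cong\ell_2\otimes\ell_2$ and $\cB(\mathrm{HS})\cong\cB(\ell_2)\bar\otimes\cB(\ell_2)$ so that $\cM$ is the first leg (left multiplication) and $\cM'=1\bar\otimes\cB(\ell_2)$ is the second leg (right multiplication). The representation $\cM\subset\cB(\mathrm{HS})$ is exactly the amplification of the standard representation $\cB(\ell_2)\subset\cB(\ell_2)$ by $\ell_2$, so by the amplification description following Theorem \ref{ThmWeaverQRelAndIntQRel} every quantum relation on $\cM\subset\cB(\mathrm{HS})$ has the form $W\bar\otimes\cB(\ell_2)$ for a $w^*$-closed subspace $W\subset\cB(\ell_2)$, and any quantum uniform Roe algebra here is $\overline{\bigcup_\alpha W_\alpha\bar\otimes\cB(\ell_2)}^{\|\cdot\|}$ for a directed family $\{W_\alpha\}$. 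First I would record that the second-leg slice maps $\phi_{kl}=\mathrm{id}\otimes\omega_{kl}$, with $\omega_{kl}(y)=\langle e_k,ye_l\rangle$, are normal and contractive with $\phi_{kl}(W\bar\otimes\cB(\ell_2))\subset W$; and that the transpose $t$ ($te_{ij}=e_{ji}$) is rank-preserving, hence vector constrained with $\lambda=1$, so $t\in\cA_{\mathrm{vec}}$, while its slices $\phi_{kl}(t)=e_{lk}$ are single first-leg matrix units.

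\textbf{A rank-expanding operator far from $\cA_{\mathrm{vec}}$.} The heart is to exhibit $b\in\cB(\mathrm{HS})$ with $\mathrm{dist}(b,\cA_{\mathrm{vec}})\geq 1/\sqrt2$. Partition $\mathbb N=\bigsqcup_n B_n$ with $|B_n|=k_n\to\infty$, fix $c_n\in B_n$, and let $b=\bigoplus_n b_n$ be block diagonal, where $b_n=k_n^{-1/2}\sum_{i\in B_n}e_{ic_n}\otimes e_{ic_n}$ is the normalized rank-one amplifier sending $e_{c_nc_n}$ to the norm-one, rank-$k_n$ vector $k_n^{-1/2}\sum_{i\in B_n}e_{ii}$. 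Given any $\lambda$-vector constrained $a'$, choose $n$ with $k_n>2\lambda$ and test on $\xi=e_{c_nc_n}$: since $a'\xi$ has rank $\le\lambda$ whereas $b\xi$ has $k_n$ equal singular values, $\|b-a'\|\ge\mathrm{dist}_{\mathrm{HS}}(b\xi,\{\mathrm{rank}\le\lambda\})=((k_n-\lambda)/k_n)^{1/2}>1/\sqrt2$. As vector constrained operators are norm-dense in $\cA_{\mathrm{vec}}$, this bounds $\mathrm{dist}(b,\cA_{\mathrm{vec}})$ below; in particular $\cA_{\mathrm{vec}}\neq\cB(\mathrm{HS})$.

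\textbf{The contradiction.} Suppose $\cA_{\mathrm{vec}}=\cstu(\cM,\mathscr V)$. Since $t\in\cA_{\mathrm{vec}}$, fix $\epsilon<1/\sqrt2$ and choose $\cV_\alpha=W_\alpha\bar\otimes\cB(\ell_2)\in\mathscr V$ with $a\in\cV_\alpha$ and $\|t-a\|<\epsilon$; then $\phi_{c_ni}(a)\in W_\alpha$ satisfies $\|\phi_{c_ni}(a)-e_{ic_n}\|<\epsilon$. Replace the defining matrix units of each $b_n$ by these slices: set $\tilde b=\bigoplus_n\tilde b_n$ with $\tilde b_n=k_n^{-1/2}\sum_{i\in B_n}\phi_{c_ni}(a)\otimes e_{ic_n}$. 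Every summand lies in $W_\alpha\bar\otimes\cB(\ell_2)$, and the resulting $w^*$-limit $\tilde b$ lies in the $w^*$-closed bimodule $\cV_\alpha$, so $\tilde b\in\cV_\alpha\subset\cstu(\cM,\mathscr V)=\cA_{\mathrm{vec}}$. On the other hand $\tilde b-b=\bigoplus_n\Delta_n$, where $\Delta_n$ depends only on the $c_n$-th column and has range in block $B_n$; the $\Delta_n$ therefore have pairwise orthogonal domains and ranges, and a one-line Hilbert–Schmidt estimate gives $\|\Delta_n\|\le\epsilon$ uniformly, so $\|\tilde b-b\|=\sup_n\|\Delta_n\|\le\epsilon$. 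Hence $\mathrm{dist}(\tilde b,\cA_{\mathrm{vec}})\ge 1/\sqrt2-\epsilon>0$, contradicting $\tilde b\in\cA_{\mathrm{vec}}$.

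\textbf{Main obstacle.} The delicate point is the error control in the final step. Because $\tilde b$ is assembled from one approximant $a$ of fixed quality $\epsilon$, one error is incurred per summand, and the naive ``single column'' or ``single block'' constructions only produce operators that are $\lambda$-vector constrained for some finite (if large) $\lambda$, hence harmless members of $\cA_{\mathrm{vec}}$; these attempts fail precisely because $\cV_\alpha\subset\cA_{\mathrm{vec}}$ forces everything built inside a single relation to be benign. What rescues the argument is the block-diagonal design with growing block sizes together with the $k_n^{-1/2}$ normalization: it makes $b$ \emph{rank-expanding without any finite bound} across $n$ yet keeps the blocks \emph{mutually orthogonal}, so the total approximation error is the supremum rather than the sum of the per-block errors and stays below $1/\sqrt2$. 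Verifying the orthogonality of the $\Delta_n$ and the uniform bound $\|\Delta_n\|\le\epsilon$ is the routine computation that makes the distance estimate survive passage to the norm closure.
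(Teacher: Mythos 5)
Your proof is correct and follows essentially the same route as the paper's: approximate the transpose by an element of a single quantum relation, use the $\cM'$-bimodule structure to reassemble it into a block-diagonal operator with unbounded rank expansion (your $\tilde b$ and $b$ are the paper's $a_2$ and $a_3$ up to reindexing), and conclude via the rank-truncation distance estimate at the test vectors $\widehat{e_{c_n c_n}}$. The only cosmetic differences are that you extract first-leg components with slice maps and the amplification classification $\cV = W \bar\otimes \cB(\ell_2)$ where the paper conjugates directly by right multiplications $R(e_{1i})\cdot R(e_{ni})$ inside $\overline{\mathrm{span}}^{w^*}\cM' a_1 \cM'$, and you invoke the best low-rank approximation bound where the paper uses Kaplansky's parallelogram law.
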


\begin{proof}
We first repeat some  observations from Example \ref{2x2expansionctrex}, but in the infinite-dimensional setting.  Any element of $\cM$ is left multiplication by some $\ell_2$-operator $b$, denoted $L(b)$; similarly elements of $\cM'$ are right multiplications $R(b)$.  Given a Hilbert-Schmidt operator $c$ on $\ell_2$, we write $\hat{c}$ for the associated vector in $\mathrm{HS}$.  Then $s^\cM(\hat{c}) \in \cM$ is left multiplication by the projection onto the range of $a$.  Let $a \in \cB(\mathrm{HS})$ be the transpose map $\hat{c} \mapsto \widehat{c^t}$, which is a self-adjoint isometry on $\mathrm{HS}$.    Again $s^\cM(a(\hat{c})) = s^\cM(\widehat{c^t})$ has the same (possibly infinite) trace as $s^\cM(\hat{c})$, because the transpose operation preserves rank.  Thus $a$ satisfies \eqref{suppscale2} for $\lambda=1$.

Denote by $\cA$ the vector support expansion \cstar-algebra, which is the norm closure of the vector constrained operators.  Suppose towards a contradiction that $\cA$ is a quantum uniform Roe algebra, obtained as the norm closure of some algebraic quantum uniform Roe algebra $\cA_0$.  Since $a \in \cA$, there is $a_1 \in \cA_0$ with $\|a - a_1\| < \frac12$.  Now $\cA_0$ is the union of weak*-closed $\cM'$--$\cM'$ bimodules, so any element of $\overline{\mathrm{span}}^{w^*} \cM' a_1 \cM'$ also belongs to $\cA_0$.  The strategy of the proof is to build an element $a_2 \in \cA_0$ and show that it is not a limit of vector constrained operators.

Let $I_1 = \{1\}$, $I_2 = \{2,3\}$, $I_3=\{4,5,6\}$, etc.  Define $a_2 \in \cB(\mathrm{HS})$ by
\begin{align*}
a_2&=\mathrm{SOT}\text{-}\sum_{n\in\N} \frac{1}{\sqrt{n}}\sum_{i\in I_n} R(e_{1i})a_1 R(e_{ni}) \\ &= R(e_{11})a_1 R(e_{11}) + \frac{1}{\sqrt{2}}[R(e_{12}) a_1 R(e_{22}) + R(e_{13}) a_1 R(e_{23})] \\ &{} \quad + \frac{1}{\sqrt{3}}[R(e_{14}) a_1 R(e_{34}) + R(e_{15}) a_1 R(e_{35}) + R(e_{16}) a_1 R(e_{36})] + \dots
\end{align*}
Each term of the form $R(e_{1i})a_1 R(e_{ni})$ has range inside the $i$th column of $\mathrm{HS}$.  Each sum $\sum_{i\in I_n} R(e_{1i})a_1 R(e_{ni})$ is the sum of $n$ operators with norm $\leq \|a_1\|$ and pairwise orthogonal ranges and thus has norm $\leq \sqrt{n} \|a_1\|$.  This entails that $\frac{1}{\sqrt{n}}\sum_{i\in I_n} R(e_{1i})a_1 R(e_{ni})$ is an operator of norm $\leq \|a_1\|$ that annihilates vectors supported off the $n$th column of $\mathrm{HS}$ and has range inside the columns of $\mathrm{HS}$ indexed by $I_n$.  For different $n$ these have orthogonal left and right supports; it follows that the partial sums converge strongly to an operator $a_2$ of norm $\leq \|a_1\|$.  By the remark in the previous paragraph, we have $a_2 \in \cA_0$.

For convenience denote
$$a_3 = \mathrm{SOT}\text{-}\sum_{n\in\N} \frac{1}{\sqrt{n}}\sum_{i\in I_n} R(e_{1i})a R(e_{ni}),$$
so that
$$\|a_2 - a_3\| = \left\|\mathrm{SOT}\text{-}\sum_{n\in\N} \frac{1}{\sqrt{n}}\sum_{i\in I_n} R(e_{1i})(a_1 - a) R(e_{ni})\right\| \leq \|a_1 - a\| < \frac12$$
by the same argument as above.  The action of $a_3$ is nice:
$$\left( \begin{matrix}  t_{11} & t_{12} & t_{13} & \cdots \\ \vdots & \ddots & {} & {} \\ {} & {} & {} & {} \\ {} \\ {} & {} & {} & {} \end{matrix} \right)  \mapsto \left( \begin{matrix}  t_{11} & 0 & 0 & \cdots \\ 0 & \frac{t_{12}}{\sqrt{2}} & 0 & \cdots \\ 0 & 0 & \frac{t_{12}}{\sqrt{2}} & \cdots \\ \vdots & {} & {} & \ddots \end{matrix} \right).$$

To finish the proof we demonstrate that any $d \in \cB(\mathrm{HS})$ with $\|d - a_2\| < .1$ cannot satisfy \eqref{suppscale2} for any $\lambda > 0$.  Suppose there are such $d$ and $\lambda$.  Pick $n > 2 \lambda$.  By \eqref{suppscale2} at the vector $\widehat{e_{1n}}$,
$$\text{Tr}(s^\cM(d(\widehat{e_{1n}}))) \leq \lambda \text{Tr}(s^\cM(\widehat{e_{1n}})) = \lambda \text{Tr}(e_{11}) = \lambda.$$
Let $q$ be the projection in $\cB(\ell_2)$ such that $L(q)=s^\cM(d(\widehat{e_{1n}}))$, so that $q$ has rank $\leq \lambda$.  As $L(q^\perp)(d(\widehat{e_{1n}}))=0$, we have 
\begin{align*}
\|L(q^\perp)(
a_3(\widehat{e_{1n}}))\|_{\mathrm{HS}} &=\|L(q^\perp)((d-a_3)(\widehat{e_{1n}}))\|_{\mathrm{HS}}  \leq \|(d-a_3)(\widehat{e_{1n}})\|_{\mathrm{HS}} \\ &\leq \|d - a_3\| \leq \|d - a_2\| + \|a_2 - a_3\| < .1 + .5 = .6.
\end{align*}
On the other hand, $a_3(\widehat{e_{1n}})$ is $1/\sqrt{n}$ times a projection $r$ of rank $n$.  Since  $n > 2 \lambda$, Kaplansky's law for projections gives 
$$r - r \wedge q^\perp \sim r \vee q^\perp - q^\perp \leq q \quad \Rightarrow \quad \text{Tr}(r) - \text{Tr}(r \wedge q^\perp) \leq \text{Tr}(q) \quad \Rightarrow \quad n - \text{Tr}(r \wedge q^\perp) \leq \lambda < \frac{n}{2},$$
meaning $r \wedge q^\perp$ has rank at least $n/2$.  Finally
$$\|L(q^\perp)(
a_3(\widehat{e_{1n}}))\|_{\mathrm{HS}} = \left\|\left(\frac{\widehat{q^\perp r}}{\sqrt{n}}\right) \right\|_{\mathrm{HS}} \geq \left\|L(r \wedge q^\perp) \left(\frac{\widehat{q^\perp r}}{\sqrt{n}}\right) \right\|_{\mathrm{HS}} \geq \left\|\frac{\widehat{r \wedge q^\perp}}{\sqrt{n}} \right\|_{\mathrm{HS}} \geq  \frac{1}{\sqrt{2}},$$
which violates the previous inequality.
\end{proof}

\begin{remark}
We know from Section \ref{diffrep}  that the quantum uniform Roe algebras for the left multiplication representation $\cB(\ell_2) \subset  \cB(\mathrm{HS}) \simeq \cB(\ell_2 \otimes \ell_2)$ are given by amplifying quantum uniform Roe algebras for $\cB(\ell_2) \subset  \cB(\ell_2)$, and we know from Section \ref{B(H)qura} that the latter can be any \cstar-algebra.  So the point of Theorem \ref{notqura} is that the vector support expansion \cstar-algebra is \underline{not} of the form ``$(\text{\cstar-algebra}) \overline{\otimes} \cB(\ell_2)$", where the two factors are acting by left and right multiplication on $\mathrm{HS}$.

The mechanism of the proof makes sense from this perspective too: if the vector support expanstion \cstar-algebra were of this form inside $\cB(\mathrm{HS})$, it would be closed under strong limits and composition with right multiplications from $\cB(\ell_2)$, which is shown false.
\end{remark}

Even though vector support expansion \cstar-algebras do not always arise from a quantum coarse structure, we believe they are natural objects and interesting for study in their own right. 

\subsection{Rigidity and spatial implementation of *-isomorphisms}\label{SubsectionII1Factor}

A ``rigidity" theorem for uniform Roe algebras says that an equivalence at the operator algebra level (*-isomorphism or something weaker) implies coarse equivalence of metric/coarse spaces.  See for instance \cite{BaudierBragaFarahVignatiWillettFJA}.  
In this short subsection we notice that a key step in proving such results is true in a specific quantum situation (but not generally).

It is a basic fact of \cstar-theory that any *-isomorphism between represented \cstar-algebras containing the compact operators must be \emph{spatial}, i.e., of the form $\textrm{Ad}(u)$ for some unitary $u$ \cite[Theorem 1.3.4 and Corollaries to Theorem 1.4.4]{ArvesonInv}.  Classical uniform Roe algebras of connected coarse spaces (see Section \ref{SecConnected}) always contain the compact operators, and building on this, it was shown in \cite[Lemma 3.1]{BragaFarah2018} that all *-isomorphisms between classical uniform Roe algebras are spatial.  Spatiality is useful for proving rigidity-type results because the implementing unitary provides a foothold for building maps between the underlying sets.

But isomorphisms between quantum uniform Roe algebras do \emph{not} need to be spatially implemented, even when they arise from quantum coarse structures over the same represented von Neumann algebra.  For instance, the algebras
$$\left\{ \left( \begin{smallmatrix} a & 0 & 0 & 0 \\ 0 & a & 0 & 0 \\ 0 & 0 & b & 0 \\ 0 & 0 & 0 & b \end{smallmatrix} \right) \: \mid \: a,b \in \mathbb{C} \right\}, \qquad \left\{ \left( \begin{smallmatrix} a & 0 & 0 & 0 \\ 0 & a & 0 & 0 \\ 0 & 0 & a & 0 \\ 0 & 0 & 0 & b \end{smallmatrix} \right) \: \mid \: a,b \in \mathbb{C} \right\}$$
are quantum uniform Roe algebras for $\cM=\mathrm{M}_4(\C) \subset  \cB(\C^4)$ (Section \ref{quraMn}).  They are *-isomorphic to $\mathbb{C} \oplus \mathbb{C}$ and each other, but they are not spatially isomorphic.

We may still conclude spatiality for *-isomorphisms between quantum uniform  Roe algebras containing the compacts (as the algebras above fail to do).  The next result, which is about support expansion \cstar-algebras, points out a specific case of this.  Recall that a II$_1$-factor $\cM$ with trace $\tau$ is said to have \emph{property $\Gamma$} if for all $\eps>0$ and all $x_1,\ldots, x_n\in\cM$ there is a unitary $u\in \cM$ with $\tau(u)=0$  and $\|x_iu-ux_i\|_2 <\eps$ for all $i\in \{1.\ldots, n\}$.

\begin{proposition}\label{ThmII1notGammaIsoUnitaryImpl}
Let $(\cM, \tau) \subset  \cB(L_2(\cM, \tau))$ and $(\cN, \tau') \subset  \cB(L_2(\cN, \tau'))$ be  II$_1$-factors that do not have property $\Gamma$. Every *-isomorphism between the support expansion \cstar-algebras $\cstu(\cM,\mathscr{V}_{\tau})$ and $\cstu(\cN,\mathscr{V}_{\tau'})$   is spatially implemented.  
\end{proposition}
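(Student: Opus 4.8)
The plan is to reduce the statement to showing that each support expansion \cstar-algebra contains the compact operators on its standard Hilbert space, and then invoke the classical spatiality package for algebras containing the compacts. Write $H=L_2(\cM,\tau)$, let $\Omega=\hat 1\in H$ be the cyclic trace vector, $J$ the modular conjugation, and $e_\Omega$ the rank-one projection onto $\C\Omega$. By Theorem \ref{PropDimContMinside}(1) we have $\cst(\cM\cup\cM')\subset \cstu(\cM,\mathscr{V}_\tau)$, so it is enough to prove $\cK(H)\subset \cst(\cM\cup\cM')$; the analogous statement for $\cN$ follows identically. Granting this, both quantum uniform Roe algebras contain, and hence act irreducibly through, the compacts, and the result \cite{ArvesonInv} cited above applies: any $^*$-isomorphism $\Phi$ between them carries the (unique minimal, essential) ideal of compacts onto the compacts, restricts there to a spatial isomorphism $\Ad(u)$, and --- since the compacts form an essential ideal --- this forces $\Phi=\Ad(u)$ on the whole algebra. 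Both directions of the compact-identification use that $\cM$ \emph{and} $\cN$ lack $\Gamma$, which is why the hypothesis is two-sided.

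The heart of the matter is producing the compacts, and this is exactly where the absence of property $\Gamma$ enters, through its \emph{spectral gap} reformulation in the standard form: there exist $x_1,\dots,x_n\in\cM$ and $c>0$ such that
\[\sum_{i=1}^n\|x_i\xi-\xi x_i\|_2^2\ \geq\ c\,\|\xi-\tau(\xi)\Omega\|_2^2\qquad\text{for all }\xi\in H.\]
Here $\xi x_i$ means $R(x_i)\xi$ with $R(x_i)=Jx_i^*J\in\cM'$, so that $T:=\sum_{i=1}^n\bigl(x_i-R(x_i)\bigr)^*\bigl(x_i-R(x_i)\bigr)$ is a positive element of $\cst(\cM\cup\cM')$. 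I would check that $T\Omega=0$ (each $(x_i-R(x_i))\Omega=\widehat{x_i}-\widehat{x_i}=0$) and that the displayed inequality reads $T\geq c(1-e_\Omega)$ as quadratic forms. Since $\C\Omega=\ker T$ is reducing for the self-adjoint $T$, its restriction to $\Omega^\perp$ is $\geq c$, so the spectrum of $T$ lies in $\{0\}\cup[c,\infty)$ with $e_\Omega$ the spectral projection at $0$. Choosing a continuous $f$ with $f(0)=1$ and $f\equiv 0$ on $[c,\infty)$, continuous functional calculus then yields $e_\Omega=f(T)\in\cst(\cM\cup\cM')$.

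With $e_\Omega$ in hand the compacts follow quickly: for $x,y\in\cM$ one computes $x\,e_\Omega\,R(y)=|\widehat{x}\rangle\langle \widehat{y^*}|$, a rank-one operator lying in $\cst(\cM\cup\cM')$. As $\cM\Omega$ is dense in $H$, the vectors $\widehat{x}$ and $\widehat{y^*}$ range over dense subsets of $H$, so the norm-closed span of these operators is all of $\cK(H)$; hence $\cK(H)\subset\cst(\cM\cup\cM')\subset\cstu(\cM,\mathscr{V}_\tau)$, as required, and symmetrically for $\cN$.

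I expect the main obstacle to be the spectral gap reformulation itself: identifying the negation of property $\Gamma$ (as defined above, via trace-zero unitaries) with the quantitative $L_2$-inequality for all $\xi\in H$, and in particular confirming that the $0$-eigenspace of $T$ is exactly $\C\Omega$ with a genuine gap. This equivalence is standard but not purely formal --- it requires passing between almost-central \emph{unit vectors orthogonal to $\Omega$} and almost-central \emph{trace-zero unitaries} --- and it is the only place where the structure theory of $\mathrm{II}_1$-factors is genuinely used; the remainder is functional calculus together with the standard spatiality theory for \cstar-algebras containing the compact operators.
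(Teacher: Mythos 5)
Your proof is correct and follows essentially the same route as the paper: both reduce to showing $\cK(L_2(\cM,\tau))\subset \mathrm{C}^*(\cM\cup\cM')\subset \cstu(\cM,\mathscr{V}_\tau)$ via Theorem \ref{PropDimContMinside}(1) and then invoke the classical spatiality result for represented \cstar-algebras containing the compacts. The only difference is that the paper simply cites \cite[Theorem 2.1]{Connes1976Annals} for the inclusion $\cK(L_2(\cM))\subset \mathrm{C}^*(\cM\cup\cM')$ in the absence of property $\Gamma$, whereas you unpack that citation with a (correct) spectral-gap argument producing $e_\Omega$ by functional calculus.
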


\begin{proof}
As $\cM$ does not have property $\Gamma$, \cite[Theorem 2.1]{Connes1976Annals} says that $\cK(L_2(\cM))\subset  \mathrm{C}^*(\cM \cup \cM')$.  But the latter is contained in $\cstu(\cM,\mathscr{V}_{\tau})$ by Theorem \ref{PropDimContMinside}(1).  Similarly $\cK(L_2(\cN))\subset  \cstu(\cN,\mathscr{V}_{\tau'})$, so by the preceding remarks all isomorphisms between these algebras are spatial.
\end{proof}

Developing rigidity for quantum uniform Roe algebras would be an interesting direction for future research.

\section{Morphisms in the quantum category}\label{SectionQuantumCoarseEq}
 
The current section discusses morphisms between quantum  coarse spaces.  We consider individual morphisms, various equivalences, and a notion of  subspace, and we prove additional results in case the quantum coarse space is metrizable. In Section \ref{SubsectionAsyDim} we quantize the concept of  asymptotic dimension and show that it is stable under quantum coarse embeddings (Theorem \ref{ThmAsyDim}).  

The next terminology  was introduced in  \cite{Kornell2011} (see also \cite{ChavezDominguezSwift2020}):

\begin{definition}
 A unital weak$^*$-continuous $^*$-homomorphism   $\varphi:\cM\to \cN$ between von Neumann algebras is called a \emph{quantum function}.
\end{definition}

Quantum functions are the quantum versions of ordinary functions  $X\to Y$. Indeed, given sets $X$ and $Y$, any map  $f:X\to Y$ canonically induces a quantum function   $\varphi_f:\ell_\infty(Y)\to \ell_\infty(X)$ by letting $\varphi_f(g)=g\circ f$ for all $g\in \ell_\infty$. Moreover, any quantum function $\varphi: \ell_\infty(Y)\to \ell_\infty(X)$ induces a map $f_\varphi:X\to Y$ by letting $f(x)=y$ if $\chi_{\{x\}}\leq \varphi(\chi_{\{y\}})$. These two constructions are clearly inverse to each other.

Quantum functions behave well with respect to pullbacks of intrinsic quantum relations. Precisely, given intrinsic quantum coarse spaces $(\cM,\mathscr{R})$ and $(\cN,\mathscr{Q})$,  every such  $\varphi:\cM\to \cN$ induces a canonical map  $\varphi^*:\mathrm{IQRel}(\cN)\to \mathrm{IQRel}(\cM)$ by letting 
\[\varphi^*(\cQ)=\Big\{(p,q)\in \Proj(\cM\bar\otimes \cB(\ell_2))^2: \,  \big((\varphi\otimes 1)(p),(\varphi\otimes 1)(q)\big)\in \cQ\Big\} = (\varphi \otimes 1)^{-1}(\cQ)\]
for all $\cQ\in \mathscr{Q}$ (see \cite[Proposition 2.25]{Weaver2012MemoirAMS} for a proof that this map is well-defined).

The following is the quantum version of   coarse maps.\footnote{ Recall, if $(X,\cE)$ and $(Y,\cF)$ are coarse spaces, a map  $f:X\to Y$ is \emph{coarse} if   $(f\times f)[\cE]\subset  \cF$.} 

\begin{definition}\label{DefiQuantumCoarse}
Let $(\cM,\mathscr{R}) $ and  $(\cN,\mathscr{Q}) $ be   intrinsic quantum coarse spaces. We call a quantum function  $\varphi:\cM\to \cN$  \emph{quantum coarse} if $\varphi^*[\mathscr{Q}]\subset   \mathscr{R}$. If $(\cM \subset  \cB(H),\mathscr{V})$ and $(\cN \subset  \cB(K),\mathscr{U})$ are quantum coarse spaces, we define   quantum coarse maps  considering the intrinsic quantum coarse spaces   $\mathscr{R}=\mathscr{R}_\mathscr{V}$ and $\mathscr{Q}=\mathscr{R}_\mathscr{U}$.
\end{definition}

 We start by showing that quantum coarseness is equivalent to coarseness in the classical setting:

\begin{proposition}\label{PropQuantumCoarseIFFCoarse}
Let $(X,\cE)$ and $(Y,\cF)$ be coarse spaces and consider $\ell_\infty(X)$ and $\ell_\infty(Y)$ endowed with the quantum coarse structures induced by $\cE$ and $\cF$, respectively. A map   $f:X\to Y$   is coarse if and only if $\varphi_f:\ell_\infty(Y)\to \ell_\infty(X) $ is quantum coarse. 
\end{proposition}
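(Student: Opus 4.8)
The plan is to translate both conditions into the language of classical relations and then to compute the pullback $\varphi_f^*$ explicitly. Writing $\cM = \ell_\infty(Y)$ and $\cN = \ell_\infty(X)$ (note the contravariance: $\varphi_f$ goes from the $Y$-side to the $X$-side), the canonical bijections of Proposition \ref{PropositionWeaverRelation} and Theorem \ref{ThmWeaverQRelAndIntQRel}, together with Corollary \ref{CorQRelAndIQRel}, identify the relevant intrinsic quantum coarse structures as $\mathscr{Q} = \{\cR_{\cV_E} : E \in \cE\}$ on $\cN$ and $\mathscr{R} = \{\cR_{\cV_F} : F \in \cF\}$ on $\cM$. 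Since $\varphi_f^*$ maps $\mathrm{IQRel}(\cN)$ into $\mathrm{IQRel}(\ell_\infty(Y))$ (well-definedness being \cite[Proposition 2.25]{Weaver2012MemoirAMS}) and every intrinsic quantum relation on $\ell_\infty(Y)$ corresponds to a unique relation on $Y$, the statement ``$\varphi_f$ is quantum coarse'' unwinds to the assertion that for each $E \in \cE$ the relation on $Y$ associated to $\varphi_f^*(\cR_{\cV_E})$ lies in $\cF$.

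The core of the proof is thus the identity $\varphi_f^*(\cR_{\cV_E}) = \cR_{\cV_{(f\times f)[E]}}$. First I would record the general principle that an intrinsic quantum relation $\cR \in \mathrm{IQRel}(\ell_\infty(Z))$ is detected on rank-one atoms: fixing a rank-one projection $e \in \Proj(\cB(\ell_2))$ and using $\cR = \cR_{\cV_\cR}$, one has $(z,z') \in E_{\cV_\cR} \iff (\chi_{\{z\}}\otimes e,\ \chi_{\{z'\}}\otimes e) \in \cR$, because $(\chi_{\{z\}}\otimes e)(a\otimes 1)(\chi_{\{z'\}}\otimes e) = (\chi_{\{z\}} a \chi_{\{z'\}})\otimes e$ vanishes exactly when the $(z,z')$-block $\chi_{\{z\}} a \chi_{\{z'\}}$ does. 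Applying this on $Y$ and unravelling the definition of $\varphi_f^*$ via $(\varphi_f \otimes 1)(\chi_{\{y\}}\otimes e) = \chi_{f^{-1}(y)}\otimes e$, I would obtain
\[(\chi_{\{y\}}\otimes e,\ \chi_{\{y'\}}\otimes e) \in \varphi_f^*(\cR_{\cV_E}) \iff \exists\, a \in \cV_E \ \text{ with }\ \chi_{f^{-1}(y)}\, a\, \chi_{f^{-1}(y')} \neq 0.\]
Since an operator in $\cV_E$ may have a nonzero entry exactly at the positions of $E$, the right-hand side holds precisely when some $(x,x') \in E$ satisfies $f(x)=y$ and $f(x')=y'$, that is, when $(y,y') \in (f\times f)[E]$. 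Hence the relation associated to $\varphi_f^*(\cR_{\cV_E})$ is $(f\times f)[E]$, which is the claimed identity.

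Granting the identity, the conclusion is immediate. By injectivity of $E \mapsto \cR_{\cV_E}$, the membership $\varphi_f^*(\cR_{\cV_E}) = \cR_{\cV_{(f\times f)[E]}} \in \mathscr{R}$ holds if and only if $(f\times f)[E] \in \cF$. Quantifying over $E \in \cE$, this says $\varphi_f^*[\mathscr{Q}] \subset \mathscr{R}$ if and only if $(f\times f)[\cE] \subset \cF$, i.e.\ $\varphi_f$ is quantum coarse if and only if $f$ is coarse.

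The main obstacle is bookkeeping rather than anything deep: one must track the contravariant direction of $\varphi_f$ so that the amplification $\varphi_f \otimes 1$ and the structures $\mathscr{Q}, \mathscr{R}$ are attached to the correct algebras, and one must justify the reduction from the amplified lattice $\Proj(\ell_\infty(Z)\bar\otimes \cB(\ell_2))$ down to rank-one atoms $\chi_{\{z\}}\otimes e$. This latter reduction --- that testing on these atoms recovers the full classical relation --- is exactly what makes the pullback computation tractable, and it rests on the atomic case of Weaver's correspondence rather than on any property of $f$.
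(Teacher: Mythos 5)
Your proof is correct and follows essentially the same route as the paper's: both hinge on establishing the identity $\varphi_f^*(\cR_{\cV_E})=\cR_{\cV_{(f\times f)[E]}}$ and then translating membership in the induced quantum coarse structures back to $(f\times f)[E]\in\cF$ via the bijection $E\mapsto \cV_E$. The only (harmless) technical difference is that the paper checks the identity on arbitrary pairs of projections in $\ell_\infty(Y)\bar\otimes\cB(\ell_2)$ using the generators $e_{xy}$ and weak$^*$-continuity of $\varphi_f$, whereas you check it only on rank-one atom pairs $\chi_{\{y\}}\otimes e$ and then invoke uniqueness of the classical relation attached to an intrinsic quantum relation over an atomic abelian algebra.
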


\begin{proof}
Let $E\subset  X^2$ and let $\cR_E=\cR_{\cV_E}$, i.e., $(p,q)\in \cR_E$ if and only if $\exists(x,y)\in E$ so that $p(e_{xy}\otimes 1)q\neq 0$. Then we have that 
\begin{align*}
    \varphi^*_f(\cR_E)&=\{(r,s): \,  \exists (x,y)\in E,\ [(\varphi_f\otimes 1)(r)](e_{xy}\otimes  1)[(\varphi_f\otimes 1)(s)]\neq 0\}\\
    &=\{(r,s): \,  \exists (x,y)\in E, \   r(e_{f(x)f(y)}\otimes 1) s\neq 0\}\\
    &=\cR_{ f\times f(E)}
\end{align*}
(the second equality above holds since $\varphi_f$ is weak$^*$ continuous, and $r,s$ are weak* limits of linear combinations of simple tensors).
The result then follows from   Proposition \ref{PropositionWeaverRelation} and Theorem \ref{ThmWeaverQRelAndIntQRel}.
\end{proof}

If $f:X\to Y$ is an injective coarse map between coarse spaces $(X,\cE)$ and $(Y,\cF)$, then the isometric embedding   $u_f:\ell_2(X)\to \ell_2(Y)$ given by $u_f(\delta_x)=\delta_{f(x)}$, for all $x\in Y$, induces an embedding $\mathrm{Ad}(u_f):\cstu(X)\to \cstu(Y)$ (see \cite[Theorem 1.2]{BragaFarahVignati2019} for details). Moreover, if $f$ is a bijective coarse equivalence, this embedding is an isomorphism (see  \cite[Theorem 8.1]{BragaFarah2018} for details). For this reason, we are interested in understanding the quantum versions of ``injective coarse maps'' and ``bijective coarse equivalences''. As $f:X\to Y$ is injective if and only if $\varphi_f:\ell_\infty(Y)\to \ell_\infty(X)$ is surjective,  in the quantum world the notion of an  injective coarse map $f:X\to Y$ is replaced by a surjective quantum coarse function $\cM\to \cN$. Similarly, a bijective coarse equivalence $X\to Y$ becomes an isomorphism $\varphi:\cM\to \cN$ with both $\varphi$ and $\varphi^{-1}$ being quantum coarse.  

\begin{definition}\label{DefiQuantumCoarseIso}
Let $(\cM \subset  \cB(H_\cM),\mathscr{V})$ and $(\cN \subset  \cB(H_\cN),\mathscr{U})$ be quantum coarse spaces. An isomorphism $\varphi:\cM\to \cN$ is called a \emph{quantum coarse isomorphism} if both $\varphi$ and $\varphi^{-1}$ are quantum coarse functions.    
\end{definition}

A quantum coarse isomorphism is just a change of representation for a single intrinsic quantum coarse space, so  the next theorem follows from Sections \ref{diffrep} and \ref{subsectioniqura}.

\begin{theorem}\label{ThmEmbAndIso}
Let $\varphi:\cM\to \cN$ be a quantum coarse isomorphism between the quantum coarse spaces $(\cM \subset  \cB(H_\cM),\mathscr{V})$ and $(\cN \subset  \cB(H_\cN),\mathscr{U})$.
\begin{enumerate}
    \item\label{ThmEmbAndIso1} If $\varphi$ is spatially implemented, then it induces a spatial isomorphism  $\cstu(\cM ,\mathscr{V} ) \simeq \cstu(\cN,\mathscr{U})$.
    \item \label{ThmEmbAndIso2} In any case there is a spatially implemented isomorphism \[\cstu(\cM,\mathscr{V}) \bar\otimes \cB(K) = \cstu(\cM \otimes 1_K,\mathscr{V} \bar\otimes \cB(K))  \, \simeq \, \cstu(\cN\otimes 1_K,\mathscr{U} \bar{\otimes} \cB(K)) = \cstu(\cN,\mathscr{U})\bar\otimes \cB(K)\] 
\end{enumerate}
for any Hilbert space $K$ with $\dim K \geq \max\{ \aleph_0, \dim H_\cM, \dim H_\cN\}$.
\end{theorem}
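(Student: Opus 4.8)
The plan is to read a quantum coarse isomorphism as two faithful representations of a single intrinsic quantum coarse space and then invoke Sections \ref{diffrep} and \ref{subsectioniqura}. First I would record the algebraic consequence of bijectivity: since $\varphi$ is an isomorphism, the pullbacks $\varphi^*\colon \mathrm{IQRel}(\cN)\to \mathrm{IQRel}(\cM)$ and $(\varphi^{-1})^*\colon \mathrm{IQRel}(\cM)\to \mathrm{IQRel}(\cN)$ are mutually inverse bijections (pullback is contravariant and $\varphi^*\circ(\varphi^{-1})^*=(\varphi^{-1}\circ\varphi)^*=\id$). The hypothesis that $\varphi$ and $\varphi^{-1}$ are both quantum coarse gives $\varphi^*[\mathscr{R}_\mathscr{U}]\subseteq \mathscr{R}_\mathscr{V}$ and $(\varphi^{-1})^*[\mathscr{R}_\mathscr{V}]\subseteq \mathscr{R}_\mathscr{U}$; applying the bijection $\varphi^*$ to the second inclusion and combining with the first forces $\varphi^*[\mathscr{R}_\mathscr{U}]=\mathscr{R}_\mathscr{V}$, equivalently $(\varphi^{-1})^*[\mathscr{R}_\mathscr{V}]=\mathscr{R}_\mathscr{U}$. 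Thus, identifying $\cN$ with $\cM$ along $\varphi$, both sides present the same intrinsic quantum coarse space $(\cM,\mathscr{R}_\mathscr{V})$ through the two faithful representations $\id\colon\cM\to\cB(H_\cM)$ and $\varphi\colon\cM\to\cB(H_\cN)$.

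For claim (1), assume $\varphi=\Ad(u)$ for a unitary $u\colon H_\cM\to H_\cN$; this is precisely the spatial case of the change-of-representation recipe following Theorem \ref{ThmWeaverQRelAndIntQRel}, which sends a quantum relation $\cV$ on $\cM$ to $u\cV u^*$ on $\cN$. The one computation I would spell out is the compatibility identity $\cR_{u\cV u^*}=(\varphi^{-1})^*(\cR_\cV)$: writing $p'=(u^*\otimes 1)p(u\otimes 1)=(\varphi^{-1}\otimes 1)(p)$ and likewise $q'$, one checks for $a=uvu^*$ with $v\in\cV$ that $p(a\otimes 1)q\neq 0$ if and only if $p'(v\otimes 1)q'\neq 0$, which is exactly the claim. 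Hence the intrinsic coarse structure of $u\mathscr{V}u^*$ equals $(\varphi^{-1})^*[\mathscr{R}_\mathscr{V}]=\mathscr{R}_\mathscr{U}$, and the bijection of Corollary \ref{CorQRelAndIQRel} over $\cN\subset\cB(H_\cN)$ gives $u\mathscr{V}u^*=\mathscr{U}$. Therefore $\Ad(u)$ carries $\cstu[\cM,\mathscr{V}]=\bigcup_{\cV\in\mathscr{V}}\cV$ onto $\bigcup_{\cU\in\mathscr{U}}\cU=\cstu[\cN,\mathscr{U}]$, and taking norm closures yields the spatial isomorphism $\cstu(\cM,\mathscr{V})\simeq\cstu(\cN,\mathscr{U})$.

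For claim (2), where $\varphi$ need not be spatial, I reduce to claim (1) by amplifying. By the structure theorem for $^*$-isomorphisms of represented von Neumann algebras (\cite[Theorem IV.5.5 and Corollary IV.5.6]{Takesaki2002BookI}), the representations $\id$ and $\varphi$ of $\cM$ become unitarily equivalent after amplification by any Hilbert space $K$ with $\dim K\geq\max\{\aleph_0,\dim H_\cM,\dim H_\cN\}$: there is a unitary $U\colon H_\cM\otimes K\to H_\cN\otimes K$ with $U(a\otimes 1_K)U^*=\varphi(a)\otimes 1_K$ for all $a\in\cM$. Since amplification is one of the change-of-representation operations and leaves the intrinsic quantum coarse structure unchanged (Section \ref{diffrep}), the coarse structures $\mathscr{V}\bar\otimes\cB(K)$ and $\mathscr{U}\bar\otimes\cB(K)$ still present $\mathscr{R}_\mathscr{V}$ and $\mathscr{R}_\mathscr{U}$, so $\varphi\otimes 1_K$ is once more a quantum coarse isomorphism, now implemented spatially by $U$. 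Applying claim (1) to $\varphi\otimes 1_K$ produces the middle isomorphism, while the two outer equalities are exactly the fact from Section \ref{diffrep} that amplification by $K$ commutes with the quantum uniform Roe algebra construction.

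The only real subtlety, and the step I expect to require the most care, is the bookkeeping that the amplification and spatial recipes for transporting quantum relations intertwine correctly with $\varphi^*$ and respect the five coarse-structure axioms. This is, however, already assembled in the discussion following Theorem \ref{ThmWeaverQRelAndIntQRel} and in Section \ref{diffrep}, so beyond the one-line identity $\cR_{u\cV u^*}=(\varphi^{-1})^*(\cR_\cV)$ essentially no new computation is needed, and the cardinality bound on $K$ is precisely what the von Neumann algebra structure theorem demands.
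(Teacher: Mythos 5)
Your proposal is correct and follows exactly the route the paper intends: the paper's entire proof is the one-sentence remark that a quantum coarse isomorphism is just a change of representation of a single intrinsic quantum coarse space, deferring to Sections \ref{diffrep} and \ref{subsectioniqura}. You have simply made that argument explicit --- the identity $\varphi^*[\mathscr{R}_\mathscr{U}]=\mathscr{R}_\mathscr{V}$, the compatibility $\cR_{u\cV u^*}=(\varphi^{-1})^*(\cR_\cV)$, and the amplification step via \cite[Theorem IV.5.5 and Corollary IV.5.6]{Takesaki2002BookI} --- all of which checks out.
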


For a simple example of the difference between the two parts of Theorem \ref{ThmEmbAndIso}, let $\cM=\cN = \C$, which carries a unique intrinsic quantum coarse structure.  The identity $\varphi\colon \cM\to \cN$ is a quantum coarse isomorphism.  If we represent $\cM$ in $\cB(\C)$  and $\cN$ in $\cB(\C^2)$ as multiples of the identity, their quantum uniform Roe algebras are $\C$ and $\cB(\C^2)$, respectively, which are not (spatially) isomorphic until after amplification.  
 
We now prove the quantum version of the discrete result about embeddability.  

\begin{theorem}\label{ThmEmbAndIsoF}
Let $(\cM,\mathscr{V})$ and $(\cN,\mathscr{U})$ be quantum coarse spaces and let $\varphi:\cM\to \cN$ be a spatially implemented surjective quantum function.  If $\varphi$ is  quantum coarse, then $\cstu(\cN,\mathscr{U})$ spatially embeds into $\cstu(\cM ,\mathscr{V} )$.
\end{theorem}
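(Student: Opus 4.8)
The plan is to realize the desired embedding as conjugation by the implementing isometry, and to convert quantum coarseness---stated intrinsically via pairs of projections---into an honest containment of quantum relations. By spatial implementation I mean an isometry $w\colon H_\cN\to H_\cM$ with $w^*w=1_{H_\cN}$ and $\varphi(a)=w^*aw$ for all $a\in\cM$ (the abstract analogue of the classical $u_f$, for which $\varphi_f=\Ad(u_f^*)$). Set $p=ww^*\in\cB(H_\cM)$, the range projection of $w$. First I would record that $p\in\cM'$: multiplicativity of $\varphi$ gives $w^*a(1-p)bw=0$ for all $a,b\in\cM$, which says $(1-p)\cK\perp\cK$ for the $\cM$-invariant subspace $\cK=\overline{\cM wH_\cN}$; since $pH_\cM=wH_\cN\subseteq\cK$, this forces $p$ to be the projection onto $\cK$, and $\cK$ being $\cM$-invariant puts $p\in\cM'$.

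Next I would define $\Psi=\Ad(w)\colon\cB(H_\cN)\to\cB(H_\cM)$, $\Psi(b)=wbw^*$. From $w^*w=1$ one checks immediately that $\Psi$ is an injective $*$-homomorphism, hence isometric, so the theorem reduces to $\Psi\big(\cstu(\cN,\mathscr U)\big)\subseteq\cstu(\cM,\mathscr V)$; by norm-continuity of $\Psi$ it is enough to prove the algebraic version, namely that $w\cU w^*$ sits inside some member of $\mathscr V$ for each $\cU\in\mathscr U$. This is where quantum coarseness enters: by hypothesis $\varphi^*(\cR_\cU)\in\mathscr R_\mathscr V$, so $\cV:=\cV_{\varphi^*(\cR_\cU)}$ is genuinely an element of $\mathscr V$, and the target inclusion becomes $w\cU w^*\subseteq\cV_{\varphi^*(\cR_\cU)}$.

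This inclusion is the technical heart, and the part I expect to be hardest. Fix $b\in\cU$ and a pair $(P,Q)\in\Proj(\cM\bar\otimes\cB(\ell_2))^2$ with $(P,Q)\notin\varphi^*(\cR_\cU)$; I must show $P\big((wbw^*)\otimes1\big)Q=0$. Writing $W=w\otimes1$, so that $(\varphi\otimes1)(\,\cdot\,)=\Ad(W^*)$ and $WW^*=p\otimes1$, the condition $(P,Q)\notin\varphi^*(\cR_\cU)$ unwinds (through the definition of $\cR_\cU$ in Theorem \ref{ThmWeaverQRelAndIntQRel}) to $(W^*PW)(c\otimes1)(W^*QW)=0$ for every $c\in\cU$, in particular for $c=b$. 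On the other side, $(wbw^*)\otimes1=W(b\otimes1)W^*$ has range and support beneath $p\otimes1$; and because $p\in\cM'$ we have $p\otimes1\in(\cM\bar\otimes\cB(\ell_2))'$, so $p\otimes1$ commutes with $P$ and $Q$. Sliding $p\otimes1$ past $P$ and $Q$ rewrites $P\,W(b\otimes1)W^*\,Q$ as $W\,(W^*PW)(b\otimes1)(W^*QW)\,W^*$, which vanishes by the previous line. Hence $wbw^*\in\cV$.

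Assembling the pieces, $\Psi$ carries the algebraic $\cstu[\cN,\mathscr U]=\bigcup_{\cU\in\mathscr U}\cU$ into $\bigcup_{\cV\in\mathscr V}\cV=\cstu[\cM,\mathscr V]$, and continuity of $\Psi$ extends this to the norm closures, producing the isometric (spatial) $*$-embedding $\cstu(\cN,\mathscr U)\hookrightarrow\cstu(\cM,\mathscr V)$. The indispensable input making the computation of the third paragraph go through is the fact $p=ww^*\in\cM'$, i.e.\ $p\otimes1\in(\cM\bar\otimes\cB(\ell_2))'$, which is exactly what lets the intrinsic projection-pair description of coarseness (Corollary \ref{CorQRelAndIQRel}, Theorem \ref{ThmWeaverQRelAndIntQRel}) control the spatially conjugated operators; everything else is routine bookkeeping with that dictionary.
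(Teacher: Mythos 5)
Your proposal is correct and follows essentially the same route as the paper: conjugate by the implementing isometry, translate ``$(P,Q)\notin\varphi^*(\cR_\cU)$'' through $(\varphi\otimes 1)=\Ad(W^*)$ on $\cM\bar\otimes\cB(\ell_2)$, and absorb the range projection of $W$ using that it lies in the commutant, so that $w\,\cU\,w^*\subseteq\cV_{\varphi^*(\cR_\cU)}\in\mathscr{V}$. The only difference is bookkeeping: the paper's convention for ``spatially implemented'' already places the range of the isometry at $\mathrm{Im}(r)$ for the central projection $r$ with $\ker\varphi=(1-r)\cM$, whereas you start from a bare isometry and supply the (correct) extra argument that $ww^*\in\cM'$.
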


\begin{proof}
 Let $\mathscr{R}=\mathscr{R}_\mathscr{V}$ and $\mathscr{Q}=\mathscr{R}_\mathscr{U}$.  As $\varphi$ is a quantum function, there is a central projection $r\in \cM$ such that $\ker(\varphi)=(1-r)\cM$. Hence, as $\varphi$ is spatially implemented, there is a surjective isometry   $u:H_\cN\to \mathrm{Im}(r)$ such that $\varphi(a)=u^*au$ for all $a\in \cM$.  Let $\psi:\cB(H_\cN )\to \cB(H_\cM)$ be the spatial embedding given by   $\psi(b)=ubu^*$ for all $b\in \cB(H_\cN)$.

\begin{claim}\label{Claim111}
Let $\cQ\in \mathscr{Q}$. Then   $\psi(b)\in\cV_{\varphi^*(\cQ)}$ for all     $b\in\cV_{\cQ}$.
\end{claim}

\begin{proof}
Fix $b\in \cV_{\cQ}$ and projections $p,q\in \cM \bar\otimes \cB(\ell_2)$ with $(p,q)\not\in \varphi^*(\cQ)$. So, $((\varphi\otimes 1)(p),(\varphi\otimes 1)(q))\not\in \cQ$ and we have \[[(\varphi\otimes 1)(p)](b\otimes 1)[(\varphi\otimes 1)(q)]=0.\]
Therefore, as $\varphi$ is weak$^*$-continuous, approximating  $p$ and $q$ by elements in $\cM \otimes \cB(\ell_2)$, we have that \[0 = (u^* \otimes 1)p(u \otimes 1)(b \otimes 1)(u^* \otimes 1)q(u \otimes 1)\] and then
 $p(\psi(b)\otimes 1)q=0$. The arbitrariness of $p$ and $q$ implies that $\psi(b)\in \cV_{\varphi^*(\cQ)}$.
\end{proof}

The previous claim shows that $\psi$ takes $\bigcup_{\cQ\in \mathscr{Q}}\cV_{\cQ}$ into  $\bigcup_{\cQ\in \mathscr{Q}}\cV_{\varphi^*(\cQ)}$. As $\varphi$ is quantum coarse, $\varphi^*(\cQ)\in \mathscr{R}$ for all $\cQ\in \mathscr{Q}$, so $\psi$ takes $\bigcup_{\cQ\in \mathscr{Q}}\cV_{\cQ}$ into  $\bigcup_{\cR\in \mathscr{R}}\cV_{\cR}$. Hence, $\psi\restriction\cstu(\cN,\mathscr{U})$ is an embedding of   $\cstu(\cN,\mathscr{U})$ into $\cstu(\cM ,\mathscr{V})$.
\end{proof}

Although a quantum coarse isomorphism $\cM\to\cN$ induces an isomorphism between quantum uniform Roe algebras, this is not the quantum version of \emph{coarse equivalence}, since coarse equivalences do not need to be bijective.   In order to deal with a ``nonbijective quantum coarse isomorphism'',   we must first  introduce the notion of \emph{closeness} in the quantum setting.  Recall, if $(Y,\cF)$ is a coarse space and $X$ is a set, then maps $f,g:X\to Y$ are \emph{close} if there is $F\in \cF$ so that $(f(x),f(y))\in F$ for all $x\in X$.

\begin{definition}
Let $ \cM$ and $\cN$ be  von Neumann algebras and let $\mathscr{R}$ be an intrinsic quantum coarse structure on $\cM$. Quantum functions $\varphi,\psi:\cM\to \cN$ are called \emph{quantum close} if there is $\cR\in \mathscr{R}$ so that $(p,q)\in \cR$ for all $p,q\in \Proj(\cM\bar\otimes \cB(\ell_2))$ with $(\varphi\otimes 1)(p)(\psi\otimes 1)(q)\neq 0$.
\end{definition}

\begin{proposition}
Let $X$ be a set and $(Y,\cF)$ be a coarse space, and consider  $\ell_\infty(Y)$ endowed with the quantum coarse structure induced by   $\cF$. Maps $f,g:X\to Y$ are close if and only if $\varphi_f,\psi_g\colon \ell_\infty(Y)\to \ell_\infty(X)$ are quantum close.
\end{proposition}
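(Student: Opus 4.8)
The plan is to reduce quantum closeness to an explicit pointwise condition and then match it with classical closeness. First I would fix the concrete picture. Under the identification $\ell_\infty(Y)\bar\otimes\cB(\ell_2)\cong\ell_\infty(Y,\cB(\ell_2))$, a projection is a family $p=(p(y))_{y\in Y}$ of projections in $\cB(\ell_2)$; since $\varphi_f$ is precomposition by $f$, its amplification acts by pullback, so that $(\varphi_f\otimes 1)(p)$ is the family $x\mapsto p(f(x))$ and likewise $(\psi_g\otimes 1)(q)$ is $x\mapsto q(g(x))$ (this is weak$^*$-continuity applied to simple tensors, exactly as in the proof of Proposition \ref{PropQuantumCoarseIFFCoarse}). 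Computing the product pointwise, $(\varphi_f\otimes 1)(p)\,(\psi_g\otimes 1)(q)$ is nonzero precisely when there is some $x\in X$ with $p(f(x))\,q(g(x))\neq 0$.

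The crux is the following description of the intrinsic quantum relation $\cR_F=\cR_{\cV_F}$ attached to an entourage $F\in\cF$: for projections $p,q$ as above,
\[(p,q)\in\cR_F\iff\exists (y,y')\in F\ \text{ with }\ p(y)\,q(y')\neq 0.\]
I would prove this from Theorem \ref{ThmWeaverQRelAndIntQRel}, which says $(p,q)\in\cR_{\cV_F}$ iff $p(a\otimes 1)q\neq 0$ for some $a\in\cV_F$. Writing everything in block form over $\ell_2(Y)\otimes\ell_2=\bigoplus_y\ell_2$, the operators $p,q$ are block-diagonal with blocks $p(y),q(y')$, while $a\otimes 1$ has $(y,y')$-block $a_{yy'}1_{\ell_2}$; hence the $(y,y')$-block of $p(a\otimes 1)q$ is $a_{yy'}\,p(y)\,q(y')$. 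Since elements of $\cV_F$ are exactly those with matrix supported on $F$, the existence of such an $a$ making some block nonzero is equivalent to the displayed condition (for sufficiency take $a=e_{yy'}$). Under the canonical equivalence between $\cF$ and the intrinsic quantum coarse structure $\mathscr{R}$ on $\ell_\infty(Y)$, the members of $\mathscr{R}$ are exactly the $\cR_F$, so ``there is $\cR\in\mathscr{R}$'' becomes ``there is $F\in\cF$''.

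With these two observations both implications are short. If $f,g$ are close, with $(f(x),g(x))\in F$ for all $x$, I claim the same $F$ witnesses quantum closeness: whenever $p(f(x))\,q(g(x))\neq 0$ for some $x$, the pair $(y,y')=(f(x),g(x))$ lies in $F$ and satisfies $p(y)\,q(y')\neq 0$, so $(p,q)\in\cR_F$. Conversely, assume quantum closeness witnessed by $F\in\cF$ and fix $x\in X$; I would test the condition on $p=\chi_{\{f(x)\}}\otimes 1_{\ell_2}$ and $q=\chi_{\{g(x)\}}\otimes 1_{\ell_2}$. Their product $(\varphi_f\otimes 1)(p)\,(\psi_g\otimes 1)(q)$ is nonzero at the point $x$, so $(p,q)\in\cR_F$; but $p(y)\neq 0$ forces $y=f(x)$ and $q(y')\neq 0$ forces $y'=g(x)$, so the only admissible pair is $(f(x),g(x))\in F$. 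As $x$ was arbitrary, $f$ and $g$ are close.

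I expect the only real work to be the block computation establishing the description of $\cR_F$; once the amplified quantum functions are recognized as honest pullbacks, the equivalence is a matter of plugging in the simplest projections. A minor point to keep straight is that $\ell_\infty(Y)$ carries the counting measure, so the singletons $\{f(x)\},\{g(x)\}$ are atoms and $\chi_{\{f(x)\}},\chi_{\{g(x)\}}$ are genuine projections, which is exactly what makes the test projections in the converse available.
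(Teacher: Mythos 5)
Your proof is correct and follows essentially the same route as the paper: both directions hinge on the same test projections $\chi_{\{f(x)\}}\otimes 1 = e_{f(x)f(x)}\otimes 1$ and $\chi_{\{g(x)\}}\otimes 1$, and on the same pointwise/blockwise computation of $(\varphi_f\otimes 1)(p)(\varphi_g\otimes 1)(q)$ via weak$^*$-continuity. The only difference is presentational: you isolate the explicit description of $\cR_F$ on block-diagonal projections as a standalone step, which the paper uses implicitly.
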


\begin{proof}
Suppose  $\varphi_f$  and $\varphi_g$ are quantum close, and let $F\in \cF$ be such that $\cR_{F}$ witnesses that those maps are quantum close.  Then, for each $x\in X$, we have
\[[(\varphi_f\otimes 1)(e_{f(x)f(x)}\otimes 1)][(\varphi_g\otimes 1)(e_{g(x)g(x)}\otimes 1)]= e_{xx}\otimes 1,\]
which implies that $((e_{f(x)f(x)}\otimes 1),(e_{g(x)g(x)}\otimes 1))\in \cR_{F}$. By the definition of $\cR_E$, this implies that $(f(x),g(x))\in F$.

Suppose now that $f$ and $g$ are close, and say $F\in \cF$ is so that $(f(x),g(x))\in F$ for all $x\in X$. Let $\cR_F$ be the intrinsic quantum relation  on $\ell_\infty(Y)$ given by $F$. If $p,q\in \Proj(\ell_\infty(Y)\bar\otimes \cB(\ell_2))$ are such that  $[(\varphi_f\otimes 1)(p)][(\varphi_g\otimes 1)(q)]\neq 0$, pick $x\in X$ so that  $(\varphi_f\otimes 1)(p)(e_{xx}\otimes 1)(\varphi_g\otimes 1)(q)\neq 0$. Then, as $\varphi_f$ and $\varphi_g$ are  weak$^*$-continuous, we must have that $p(e_{f(x)g(x)}\otimes 1)q\neq 0$, i.e., $(p,q)\in \cR_F$.
\end{proof}

We now introduce the quantum version of coarse equivalence:

\begin{definition}\label{DefiQuantumCoarseEquivalence}
Let $(\cM,\mathscr{V})$ and $(\cN,\mathscr{U})$ be quantum coarse spaces and $\varphi:\cM\to \cN$ be a quantum function. We say that $\varphi$ is a \emph{quantum coarse equivalence} if $\varphi$ is quantum coarse and there is a quantum coarse map $\psi:\cN\to \cM$ so that $\psi\circ \varphi$ and $\varphi\circ \psi$ are quantum close to the identities of $\cM$ and $\cN$, respectively. In this case  $(\cM,\mathscr{V})$ and $(\cN,\mathscr{U})$ are called \emph{quantum coarsely equivalent} and $\psi$ is a \emph{quantum coarse inverse} of $\varphi$.
\end{definition}

Clearly, if $\varphi:\cM\to \cN$ is a quantum coarse isomorphism, them $\varphi$ is a quantum coarse equivalence. We return to quantum equivalences in Subsections \ref{SubsectionSubspace} and  \ref{SubsectionAsyDim}.

\subsection{Quantum coarse subspaces}\label{SubsectionSubspace}
Besides equivalences between quantum coarse spaces, we want to be able to talk about embeddings. For that, we must first deal with quantum \emph{subspaces}. If $\cM$ is a von Neumann algebra   and $r\in \cM$ is a central projection, then $\cN=r\cM$ can be be seen as a von Neumann algebra in $\cB(K)$, where $K=\mathrm{Im}(r)$.  If $\mathscr{V}$ is a quantum coarse structure on $\cM$, we let  \[\mathscr{V}_\cN=\{r\cV r: \,  \cV\in \mathscr{V}\}.\footnote{As the elements in $\mathscr{V}$ are bimodules over $\cM'$,  we have that $\mathscr{V}_\cN\subset  \mathscr{V}$.}\]  Considering  $\mathscr{V}_\cN$ as a family of quantum relations on  $\cN\subset  \cB(K)$,  $\mathscr{V}_\cN$ is a quantum coarse structure on $\cN$ and we call $(\cN,\mathscr{V}_\cN)$ a \emph{quantum coarse subspace of $(\cM,\mathscr{V})$} (cf.\ \cite[Definition 2.35]{KuperbergWeaver2012MemAMS}).
 We denote the intrinsic quantum coarse structure $\mathscr{R}_{\mathscr{V}_\cN}$ on $\cN$ by $\mathscr{R}_\cN$.
 
We now show that quantum coarseness is ``independent of subspaces". In the classical case, a map $f:X\to Y$ between coarse spaces is coarse if and only if $f:X\to Z$ is coarse for (any) coarse space $Z$ with $Y\subset  Z$. The next two propositions show that quantum coarse maps satisfy the same property.

 \begin{proposition}\label{PropSubspaceEmb}
Let $(\cM,\mathscr{V})$  be a quantum coarse space and let $r$ be a central projection in $\cM$. The map $\pi:a\in\cM\mapsto ra\in r\cM$ is  quantum coarse.
\end{proposition}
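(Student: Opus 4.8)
The plan is to first confirm that $\pi\colon \cM\to r\cM$, $a\mapsto ra$, is a quantum function, and then to unwind Definition~\ref{DefiQuantumCoarse} so as to reduce quantum coarseness to a single containment of intrinsic quantum relations. Since $r$ is a central projection, $\pi$ is plainly linear, unital ($\pi(1)=r=1_{r\cM}$), adjoint-preserving, and multiplicative ($\pi(ab)=rab=ra\,rb=\pi(a)\pi(b)$, using $r^2=r$ and centrality), and it is weak$^*$-continuous as left multiplication by the fixed normal element $r$; hence $\pi$ is a quantum function. Writing $\mathscr{R}=\mathscr{R}_{\mathscr{V}}$ and $\mathscr{Q}=\mathscr{R}_{\mathscr{V}_\cN}$ and recalling $\mathscr{V}_\cN=\{r\cV r:\cV\in\mathscr{V}\}$, Corollary~\ref{CorQRelAndIQRel} tells me every member of $\mathscr{Q}$ is of the form $\cR_{r\cV r}$ for some $\cV\in\mathscr{V}$. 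So it suffices to prove $\pi^*(\cR_{r\cV r})\in\mathscr{R}$ for each $\cV\in\mathscr{V}$.

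The crux is to compute $\pi\otimes 1$ on projections. As $r$ is central in $\cM$, the element $r\otimes 1$ is central in $\cM\bar\otimes\cB(\ell_2)$, so $(\pi\otimes 1)(p)=(r\otimes 1)p=:\bar p$ is again a projection (below $r\otimes 1$), and similarly $\bar q=(r\otimes 1)q$. Given $a\in r\cV r$, so that $a=rar$, I would use that $r\otimes 1$ commutes with $p$ and $q$ and is absorbed by $a\otimes 1$ on either side to obtain
\[\bar p\,(a\otimes 1)\,\bar q \;=\; (r\otimes 1)\,p\,(a\otimes 1)\,q\,(r\otimes 1)\;=\;p\,(a\otimes 1)\,q,\]
the two sides being the same operator once $\mathrm{Im}(r)\otimes\ell_2$ is identified with $(r\otimes 1)(H\otimes\ell_2)$; in particular one vanishes exactly when the other does.

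With this identity in hand, $(p,q)\in\pi^*(\cR_{r\cV r})$ iff $(\bar p,\bar q)\in\cR_{r\cV r}$, which by Theorem~\ref{ThmWeaverQRelAndIntQRel} means there is $a\in r\cV r$ with $p(a\otimes 1)q\neq 0$. Since $r$ lies in the centre of $\cM$, hence in $\cM'$, we have $r\cV r\subset\cV$, so this $a$ belongs to $\cV$ and therefore $(p,q)\in\cR_\cV$. Thus $\pi^*(\cR_{r\cV r})\subset\cR_\cV$. Because $\cV\in\mathscr{V}$ gives $\cR_\cV\in\mathscr{R}$, and $\pi^*(\cR_{r\cV r})$ is itself an intrinsic quantum relation (well-definedness of the pullback, \cite[Proposition~2.25]{Weaver2012MemoirAMS}), the subrelation axiom Definition~\ref{DefiQuantumCoarseSpace}(\ref{Item2Def}) forces $\pi^*(\cR_{r\cV r})\in\mathscr{R}$, completing the argument. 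I expect the only delicate step to be the displayed identity: one must keep careful track of which projections are being viewed on $\mathrm{Im}(r)\otimes\ell_2$ versus $H\otimes\ell_2$ and verify that centrality of $r\otimes 1$ genuinely lets the compressions be absorbed into $a\otimes 1$. Everything else is routine.
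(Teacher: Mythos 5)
Your proof is correct and follows essentially the same route as the paper: the key step in both is the identity $(\pi\otimes 1)(p)\,(a\otimes 1)\,(\pi\otimes 1)(q)=p\,(a\otimes 1)\,q$ for $a=rar$, which reduces $\pi^*(\cR_{r\cV r})$ to a relation controlled by $r\cV r\subset \cV$. The only cosmetic difference is that the paper identifies $\pi^*(\cR_{r\cV r})=\cR_{r\cV r}$ exactly and uses $r\cV r\in\mathscr{V}$, whereas you settle for the containment $\pi^*(\cR_{r\cV r})\subset\cR_\cV$ and invoke the subrelation axiom, which works just as well.
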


\begin{proof}
Let $\cN=r\cM$. Say $\cQ\in \mathscr{R}_\cN$. So there is $\cV\in \mathscr{V}$ so that, thinking of $r \cV r$ as being in  $\mathscr{V}_\cN$,  $\cQ=\cR_{r\cV r}$. Hence, 
\begin{align*}
\pi^*(\mathcal Q)&=\{(p,q)\mid ((\pi\otimes 1)(p),(\pi\otimes 1)(q))\in \mathcal \cR_{r\cV r}\}\\
&=\{(p,q)\mid \exists v\in \cV,\ [(\pi\otimes 1)(p)(v\otimes 1)][(\pi\otimes 1)(q)]\neq 0\}\\
&=\{(p,q)\mid \exists v\in \cV,\  (p)(rvr\otimes 1)(q)\neq 0\}\\
&=\{(p,q)\mid \exists v\in r\cV r,\  (p)(v\otimes 1)(q)\neq 0\}\\
&=\cR_{r\cV r}.
\end{align*}
So,   $\pi^*(Q)=\cR_{r\cV r}\in  \mathscr{R}_{\mathscr{V}}$, i.e., $\pi$ is quantum coarse.
\end{proof}

 If $\varphi:\cM\to \cN$ is a quantum function between von Neumann algebras, then $\ker(\varphi)$ is a weak$^*$-closed ideal. Hence, there is a central projection $r\in\cM$ such that $\ker(\varphi)=(1-r)\cM$. If $\mathscr{V}$ is a quantum coarse structure on $\cM$, then we consider $\cM/\ker(\varphi)$ as a quantum coarse space endowed with the quantum coarse structure given by the canonical isomorphism $\cM/\ker(\varphi)\cong r\cM$. 

 \begin{proposition}\label{PropTakeQuotCoarse}
Let $(\cM,\mathscr{V})$ and $(\cN,\mathscr{U})$ be quantum coarse spaces. Let $\varphi:\cM\to \cN$ be a quantum function and $\psi:\cM/\ker(\varphi)\to \cN$ be the map induced by $\varphi$. Then $\varphi$ is quantum coarse   if and only if $\psi$   is  quantum coarse.
\end{proposition}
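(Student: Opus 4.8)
The plan is to exploit the factorization $\varphi = \psi \circ \pi$, where $\pi\colon\cM\to r\cM$ is the map $a\mapsto ra$ of Proposition \ref{PropSubspaceEmb} composed with the identification $\cM/\ker(\varphi)\cong r\cM$, and $\psi\colon r\cM\to\cN$ is the induced quantum function. Directly from the definition $\varphi^*(\cQ)=(\varphi\otimes1)^{-1}(\cQ)$ together with $(\psi\circ\pi)\otimes1=(\psi\otimes1)\circ(\pi\otimes1)$, pullbacks compose contravariantly, so $\varphi^*=\pi^*\circ\psi^*$. The whole argument is built on this identity together with the fact, already recorded in Proposition \ref{PropSubspaceEmb}, that $\pi$ is \emph{always} quantum coarse, i.e. $\pi^*[\mathscr{R}_{r\cM}]\subseteq\mathscr{R}$.

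The implication ``$\psi$ quantum coarse $\Rightarrow\varphi$ quantum coarse'' is then immediate: given $\cQ\in\mathscr{Q}$, the hypothesis gives $\psi^*(\cQ)\in\mathscr{R}_{r\cM}$, and Proposition \ref{PropSubspaceEmb} yields $\varphi^*(\cQ)=\pi^*(\psi^*(\cQ))\in\mathscr{R}$. Thus $\varphi^*[\mathscr{Q}]\subseteq\mathscr{R}$, as required by Definition \ref{DefiQuantumCoarse}.

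For the converse I would prove the sharper statement that $\pi^*$ \emph{reflects} membership in the coarse structures: for every $\cS\in\mathrm{IQRel}(r\cM)$, if $\pi^*(\cS)\in\mathscr{R}$ then $\cS\in\mathscr{R}_{r\cM}$. Since $r$ is central, $r\otimes1$ is a central projection in $\cM\bar\otimes\cB(\ell_2)$, one has $(\pi\otimes1)(p)=(r\otimes1)p$, and the projections of $r\cM\bar\otimes\cB(\ell_2)$ are precisely those $p\in\Proj(\cM\bar\otimes\cB(\ell_2))$ with $p\leq r\otimes1$. Consequently, restricting $\pi^*(\cS)$ to pairs $(p,q)$ with $p,q\leq r\otimes1$ recovers $\cS$ exactly (this is the injectivity of $\pi^*$, made concrete). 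Now if $\pi^*(\cS)\in\mathscr{R}=\mathscr{R}_\mathscr{V}$, then by Corollary \ref{CorQRelAndIQRel} there is $\cV\in\mathscr{V}$ with $\pi^*(\cS)=\cR_\cV$; and for $p,q\leq r\otimes1$ the compression identity $p(a\otimes1)q=p((rar)\otimes1)q$ gives
\[\cS=\bigl\{(p,q)\in\Proj(r\cM\bar\otimes\cB(\ell_2))^2: \exists\,a\in\cV,\ p((rar)\otimes1)q\neq0\bigr\}=\cR_{r\cV r},\]
where $r\cV r$ is regarded as a quantum relation on $r\cM\subseteq\cB(\mathrm{Im}(r))$. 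Since $r\cV r\in\mathscr{V}_{r\cM}$ by the definition of the subspace coarse structure, we conclude $\cS=\cR_{r\cV r}\in\mathscr{R}_{r\cM}$. Applying this with $\cS=\psi^*(\cQ)$, so that $\pi^*(\cS)=\varphi^*(\cQ)\in\mathscr{R}$ by the assumption that $\varphi$ is quantum coarse, yields $\psi^*(\cQ)\in\mathscr{R}_{r\cM}$ for every $\cQ\in\mathscr{Q}$; hence $\psi$ is quantum coarse.

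The hard part is exactly this reflection statement, which is where the real content lies (the first direction being merely composition). The points to handle with care are: that $r\cV r$ restricts to a genuine quantum relation on the reduced algebra (using $r\in\cM'$ and that $\cV$ is an $\cM'$-bimodule); that $\mathscr{R}_{r\cM}$ coincides with $\{\cR_\cW:\cW\in\mathscr{V}_{r\cM}\}$ (the subspace definition combined with the operation-preserving bijection of Corollary \ref{CorQRelAndIQRel}); and the bookkeeping of the compression identity $p(a\otimes1)q=p((rar)\otimes1)q$, valid precisely because $p,q\leq r\otimes1$. Everything else is routine.
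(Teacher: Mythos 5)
Your proof is correct and follows essentially the same route as the paper's: factor $\varphi=\psi\circ\pi$, use Proposition \ref{PropSubspaceEmb} together with $\varphi^*=\pi^*\circ\psi^*$ for one implication, and compare $\psi^*(\cQ)$ with $\varphi^*(\cQ)$ for the other. The paper disposes of the converse in one line by noting $\psi^*(\cQ)\subset\varphi^*(\cQ)$ and invoking closure under subrelations; your explicit ``reflection'' argument, which identifies $\psi^*(\cQ)$ as $\cR_{r\cV r}$ via the compression identity for projections beneath $r\otimes 1$, is a more careful spelling-out of exactly that identification rather than a different method.
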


\begin{proof} 
Let $r$ be a central projection in $\cM$ so that $(1-r)\cM=\ker(\varphi)$, so we identify $\cM/\ker(\varphi)$ with $r\cM$.   Let $\pi:\cM\to r\cM$ be the map given by $\pi(a)=r a$ for all $a\in\cM$; so $\pi$ is quantum coarse  (Proposition \ref{PropSubspaceEmb}) and $\varphi=\psi\circ \pi$. In particular, if $\psi$ is quantum coarse, so is $\varphi$. Moreover,  if $\cQ\in \mathrm{IQRel}(\cN)$, then $\psi^*(\cQ)\subset  \varphi^*(\cQ)$. Therefore, if $\varphi$ is quantum coarse, so is $\psi$.
\end{proof}

 In the discrete setting, a map $f:X\to Y$ is a coarse embedding if $f:X\to f(X)$ is a coarse equivalence. We then make the following definition:
 
\begin{definition}\label{DefiQuantumCoarseEmbedding}
 Let $(\cM,\mathscr{V})$ and $(\cN,\mathscr{U})$ be quantum coarse spaces. A quantum function   $\varphi:\cM\to \cN$ is called a \emph{quantum coarse embedding} of $(\cN,\mathscr{U})$ into $(\cM,\mathscr{V})$ if the induced map $\psi:\cM/\ker(\varphi)\to \cN$ is a quantum coarse equivalence.
 \end{definition}

\subsection{Expanding maps and coboundedness}
Notoriously, there are two equivalent ways of defining equivalences  between  coarse spaces:  (1)  a coarse map $f:X\to Y$ is a coarse equivalence if there is a coarse map $g:Y\to X$ so that $g\circ f$ and $f\circ g$ are close to $\mathrm{Id}_X$ and $\mathrm{Id}_Y$, respectively, and (2) a coarse map is a coarse equivalence if it is \emph{expanding} and \emph{cobounded}.\footnote{A map $f:X\to Y$ between coarse spaces $(X,\cE)$ and $(Y,\cF)$  is \emph{expanding} if $(f\times f)^{-1}[\cF]\subset  \cE$ and \emph{cobounded} if there is $F\in \cF$ so that for all $y\in Y$ there is $x\in X $
 with $(y,f(x))\in F$.} The first one, being clearly a symmetric property, is much more natural and therefore it was the one quantized in the previous section. 
 
 As the existence of quantum functions  between von Neumann algebras is not immediate, the reader should not expect that quantum coarse equivalence should be equivalent to the existence of a single quantum coarse function $\cM\to \cN$ which is also ``quantum cobounded'' and ``quantum expanding'' (see Remark \ref{RemarkBack}). However, as we see in this section, there are still natural generalizations of expansion and coboundedness which are indeed implied by coarse equivalences (see Theorem \ref{ThmEmbImpliesExpCob}). Moreover, as we see in Theorem \ref{ThmEmbAndIsoHERE}, those concepts are related to embeddings of quantum uniform Roe algebras into \emph{hereditary} subalgebras.
 
\begin{definition}\label{DefiQuantumCoarseExpanding}
Let $(\cM,\mathscr{R}) $ and  $(\cN,\mathscr{Q}) $ be   intrinsic quantum coarse spaces. We call a quantum function   $\varphi:\cM\to \cN$   \emph{quantum expanding} if $(\varphi^*)^{-1}[\mathscr{R}]\subset  \mathscr{Q}$. If $(\cM,\mathscr{V})$ and $(\cN,\mathscr{U})$ are quantum coarse spaces, we define   quantum expanding maps by considering   the intrinsic quantum relations   $\mathscr{R}=\mathscr{R}_\mathscr{V}$ and $\mathscr{Q}=\mathscr{R}_\mathscr{U}$.
\end{definition}
 
As usual, we start by noticing that this quantization of expanding functions indeed coincides with the usual notion in the classical setting.

\begin{proposition}\label{PropQuantumExpIFFExp}
Let $(X,\cE)$ and $(Y,\cF)$ be coarse spaces and consider $\ell_\infty(X)$ and $\ell_\infty(Y)$ endowed with the quantum coarse structures induced by $\cE$ and $\cF$, respectively. A map   $f:X\to Y$   is expanding   if and only if $\varphi_f :\ell_\infty(Y)\to \ell_\infty(X)$ is quantum expanding. 
\end{proposition}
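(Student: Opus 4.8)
The plan is to transport the statement to the level of classical relations on $X$ and $Y$ via Weaver's correspondence, after which only elementary set theory about preimages remains. Recall that $\varphi_f:\ell_\infty(Y)\to\ell_\infty(X)$, so that $\varphi_f^*:\mathrm{IQRel}(\ell_\infty(X))\to\mathrm{IQRel}(\ell_\infty(Y))$, and that, in the notation of Definition \ref{DefiQuantumCoarseExpanding}, here the source $\cM=\ell_\infty(Y)$ carries the intrinsic structure $\mathscr{R}$ induced by $\cF$ while the target $\cN=\ell_\infty(X)$ carries the structure $\mathscr{Q}$ induced by $\cE$. By Proposition \ref{PropositionWeaverRelation} every intrinsic quantum relation on $\ell_\infty(X)$ is $\cR_E$ for a unique relation $E\subset X^2$, and by the canonical equivalence between classical and quantum coarse structures on $\ell_\infty$ one has $\cR_E\in\mathscr{Q}$ precisely when $E\in\cE$; similarly $\cR_{F}\in\mathscr{R}$ precisely when $F\in\cF$. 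The computation already carried out in the proof of Proposition \ref{PropQuantumCoarseIFFCoarse} gives $\varphi_f^*(\cR_E)=\cR_{f\times f(E)}$ for every $E\subset X^2$. Combining these three facts, quantum expansion of $\varphi_f$, namely $(\varphi_f^*)^{-1}[\mathscr{R}]\subset\mathscr{Q}$, is equivalent to the purely classical condition
\[
\text{for every }E\subset X^2:\quad f\times f(E)\in\cF\ \Rightarrow\ E\in\cE.
\]

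It then remains to check that this condition is equivalent to $f$ being expanding, i.e.\ to $(f\times f)^{-1}[\cF]\subset\cE$; both directions use closure of $\cE$ and $\cF$ under subsets together with the adjunction $f\times f(E)\subset F\iff E\subset (f\times f)^{-1}(F)$. First I would assume $f$ expanding and take $E\subset X^2$ with $F:=f\times f(E)\in\cF$; then $E\subset(f\times f)^{-1}(F)\in\cE$, so $E\in\cE$ by closure under subsets. Conversely, assuming the displayed condition, I would take an arbitrary $F\in\cF$ and set $E:=(f\times f)^{-1}(F)$; then $f\times f(E)\subset F$ lies in $\cF$, so the condition forces $E=(f\times f)^{-1}(F)\in\cE$, which is exactly expansion.

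The mathematical content is thus entirely absorbed into the formula $\varphi_f^*(\cR_E)=\cR_{f\times f(E)}$ from the previous proposition, and the only points demanding care are bookkeeping ones: keeping straight which algebra ($\ell_\infty(X)$ or $\ell_\infty(Y)$) carries which coarse structure once the arrow $\varphi_f$ reverses the direction of $f$, and invoking closure under subsets at the right moments so that the image/preimage \emph{inclusions} upgrade to genuine \emph{membership} in $\cE$ and $\cF$. I do not anticipate any real obstacle beyond this translation, since the quantum input has already been established in Proposition \ref{PropQuantumCoarseIFFCoarse}.
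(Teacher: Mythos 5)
Your proof is correct and follows essentially the same route as the paper's: both reduce the statement to the identity $\varphi_f^*(\cR_E)=\cR_{(f\times f)(E)}$ (which the paper re-derives in a displayed chain of equivalences and you instead cite from Proposition \ref{PropQuantumCoarseIFFCoarse}), and then identify quantum expansion with the classical condition ``$(f\times f)(E)\in\cF\Rightarrow E\in\cE$ for all $E\subset X^2$.'' The only difference is that you spell out, via closure under subsets and the image/preimage adjunction, the final equivalence of that condition with $(f\times f)^{-1}[\cF]\subset\cE$, which the paper merely asserts with ``notice that''; your bookkeeping of which algebra carries which structure is also consistent with Definition \ref{DefiQuantumCoarseExpanding}.
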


\begin{proof}
Let   $\overline{f\times f}:\cP(X^2)\to \cP(Y^2)$ be the    map given by $\overline{f\times f}(E)=(f\times f)[E]$ for all $E\in \cP(X^2)$, and notice that  $f$ is expanding  if and only if $\overline{f\times f}^{-1}[\cE]\subset  \cF$.  Given $F\subset  Y^2$, let $\cR_F=\cR_{\cV_F}$ and notice that, as $\varphi_f$ is weak$^*$-continuous, we have  that 
\begin{align*}
    \cQ\in (\varphi_f^*)^{-1}(\cR_{F})\ &\iff\  \exists E\in \cP(X^2)\text{ with }\cQ=\cR_E \text{ and }  \varphi_f^*(\cR_E )=\cR_F\\
    &\iff\ \exists E\in \cP(X^2)\text{ with }\cQ=\cR_E \text{ and } \\
    &\ \ \ \ \ \{(r,s): \,  \exists (x,y)\in E,\  r (e_{f(x)f(y)}\otimes 1)s\neq 0\}\\
    &\ \ \ \ \ \ \ \ = \{(r,s): \,  \exists (z,w)\in F,\  r(e_{zw}\otimes 1)s\neq 0\}\\
    &\iff\ \exists E\in \cP(X^2)\text{ with }\cQ=\cR_E \text{ and }\overline{f\times f}(E)=F\\
    &\iff\ \cQ\in \{\cR_E: \,  E\in  \overline{f\times f}^{-1}[\{F\}]\}.
\end{align*}
So $(\varphi_f^*)^{-1}(\cR_{F})=\{\cR_E: \,  E\in  \overline{f\times f}^{-1}[\{F\}]\}$ for all $F\subset  Y^2$ and the result follows.
\end{proof}
 
If $X$, $Y$, and $Z$ are coarse spaces and $Y\subset  Z$, then a map $f:X\to Y$ is expanding if and only if $f:X\to Z$ is expanding. The goal of the next two propositions is to show that the same holds for quantum expansion. 

\begin{proposition}\label{PropPIExp}
Let $(\cM,\mathscr{V})$  be a quantum coarse space and let $r$ be a central projection in $\cM \subset  \cB(H)$. The map $\pi:a\in\cM\mapsto ra\in r\cM$ is   quantum expanding.
\end{proposition}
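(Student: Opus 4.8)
The plan is to run the same correspondence used in the proof of Proposition~\ref{PropSubspaceEmb}, but in the opposite direction. Write $\cN = r\cM$, so that $\pi$ is quantum expanding exactly when $(\pi^*)^{-1}[\mathscr{R}_\mathscr{V}] \subseteq \mathscr{R}_\cN$; concretely, I must show that any $\cQ \in \mathrm{IQRel}(r\cM)$ with $\pi^*(\cQ) \in \mathscr{R}_\mathscr{V}$ already lies in $\mathscr{R}_\cN = \mathscr{R}_{\mathscr{V}_\cN}$. By Theorem~\ref{ThmWeaverQRelAndIntQRel} I would fix the unique $\cU \in \mathrm{QRel}(r\cM \subset \cB(rH))$ with $\cQ = \cR_\cU$ and reduce the whole statement to a claim about $\cU$.

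The first step, and the only one with real content, is to check that $\cU$, viewed inside $\cB(H)$ via $\cB(rH) = r\cB(H)r$, is itself a quantum relation on $\cM \subset \cB(H)$. Since $r$ is central in $\cM$ it is also central in $\cM'$ (indeed $Z(\cM') = Z(\cM)$), so every $w \in \cU$ satisfies $w = rwr$, and for $m',m'' \in \cM'$ we get $m'wm'' = rm'wm''r$, which lies in $\cU$ because $rm'|_{rH}$ and $m''r|_{rH}$ belong to $(r\cM)' = \cM'r$ and $\cU$ is an $(r\cM)'$-bimodule. Weak*-closedness is inherited from $\cB(rH)$ being weak*-closed in $\cB(H)$. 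Thus $\cU \in \mathrm{QRel}(\cM \subset \cB(H))$.

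The second step is the bookkeeping identity $\pi^*(\cR_\cU) = \cR_\cU$, where the left side is the intrinsic relation of $\cU$ over $r\cM$ pulled back to $\cM$, and the right side is the intrinsic relation of $\cU$ over $\cM$. Exactly as in Proposition~\ref{PropSubspaceEmb}, I would use $(\pi \otimes 1)(p) = (r \otimes 1)p$ and $(r\otimes 1)(w\otimes 1)(r\otimes 1) = w \otimes 1$ (valid since $w = rwr$) to collapse the condition $((\pi\otimes 1)(p),(\pi\otimes 1)(q)) \in \cR_\cU$ into the requirement that there exist $w \in \cU$ with $p(w\otimes 1)q \neq 0$, which is precisely the defining condition for $(p,q) \in \cR_\cU$ over $\cM$.

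Granting these two steps, the proof finishes immediately: the hypothesis reads $\pi^*(\cQ) = \cR_\cU \in \mathscr{R}_\mathscr{V}$, so by the bijective correspondence of Corollary~\ref{CorQRelAndIQRel} we get $\cU \in \mathscr{V}$; and since $\cU = r\cU r$, taking $\cV = \cU$ in the definition $\mathscr{V}_\cN = \{r\cV r : \cV \in \mathscr{V}\}$ shows $\cU \in \mathscr{V}_\cN$, whence $\cQ = \cR_\cU \in \mathscr{R}_{\mathscr{V}_\cN} = \mathscr{R}_\cN$. I expect no serious obstacle: the content is entirely in verifying that a quantum relation on the central corner $r\cM$ is literally a quantum relation on $\cM$, after which everything is the same translation carried out for Proposition~\ref{PropSubspaceEmb}.
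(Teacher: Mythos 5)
Your proof is correct and follows essentially the same route as the paper's: both arguments transfer between the intrinsic and represented pictures via Theorem \ref{ThmWeaverQRelAndIntQRel} and exploit the centrality of $r$ (so that $(r\otimes 1)p(w\otimes 1)(r\otimes 1)q = p(w\otimes 1)q$) to identify $\pi^*(\cQ)$ with a relation supported in the corner. The only cosmetic difference is that the paper shows the containment $\cQ\subset \cR_{r\cV r}$ and then invokes closure of $\mathscr{R}_{r\cM}$ under sub-relations, whereas you pin down $\cQ=\cR_\cU$ with $\cU=r\cU r\in\mathscr{V}$ exactly, which is a slightly sharper but equivalent bookkeeping.
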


\begin{proof}
Say $\cR\in \mathscr{R}$ and pick $\cQ\in \mathrm{IQRel}(r\cM)$ so that $\varphi^*(\cQ)=\cR$. By Theorem \ref{ThmWeaverQRelAndIntQRel}, there are  quantum relations $\cV$ and $\cU$ on $\cM$ so that $\cR=\cR_{\cV}$ and, thinking of $r\cU r$ as a quantum relation on $r\cM$, $\cQ=\cR_{r \cU r}$. Unfolding definitions, it is straightforward to check that, considering $r\cV r$ as a subspace of $\cB(r H)$, we have $\cQ\subset  \cR_{r\cV r}$. So $\cQ\in \mathscr{R}_{r\cM}$, and $\pi$ is quantum expanding.
\end{proof}

\begin{proposition}\label{PropSubspaceEquiv}
Let $(\cM,\mathscr{V})$ and $(\cN,\mathscr{U})$ be quantum coarse spaces. Let $\varphi:\cM\to \cN$ be a quantum function and $\psi:\cM/\ker(\varphi)\to \cN$ the map induced by $\varphi$. Then $\varphi$ is quantum expanding   if and only if $\psi$   is  quantum expanding.
\end{proposition}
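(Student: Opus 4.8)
The plan is to exploit the factorization of $\varphi$ through its quotient map together with the contravariant functoriality of the pullback operation $(\cdot)^*$, and then to feed in the two facts already established about the quotient map: it is simultaneously quantum coarse (Proposition~\ref{PropSubspaceEmb}) and quantum expanding (Proposition~\ref{PropPIExp}). Concretely, let $r\in\cM$ be the central projection with $\ker(\varphi)=(1-r)\cM$, identify $\cM/\ker(\varphi)$ with $r\cM$ carrying its subspace coarse structure $\mathscr{R}_{r\cM}$, and write $\pi\colon\cM\to r\cM$, $\pi(a)=ra$, for the quotient, so that $\varphi=\psi\circ\pi$. The structural observation that drives everything is that pullback is contravariant: unwinding $\varphi^*(\cQ)=\{(p,q):((\varphi\otimes 1)(p),(\varphi\otimes 1)(q))\in\cQ\}$ and using $\varphi\otimes 1=(\psi\otimes 1)\circ(\pi\otimes 1)$ yields, for every $\cQ\in\mathrm{IQRel}(\cN)$,
\[
\varphi^*(\cQ)=\pi^*\big(\psi^*(\cQ)\big),
\]
where $\psi^*(\cQ)\in\mathrm{IQRel}(r\cM)$ and $\pi^*(\psi^*(\cQ))\in\mathrm{IQRel}(\cM)$.

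For the forward direction, I would assume $\varphi$ is quantum expanding, i.e.\ $\varphi^*(\cQ)\in\mathscr{R}\Rightarrow\cQ\in\mathscr{Q}$, and show $\psi$ is quantum expanding. Take $\cQ\in\mathrm{IQRel}(\cN)$ with $\psi^*(\cQ)\in\mathscr{R}_{r\cM}$. Since $\pi$ is quantum coarse we have $\pi^*[\mathscr{R}_{r\cM}]\subset\mathscr{R}$, so $\varphi^*(\cQ)=\pi^*(\psi^*(\cQ))\in\mathscr{R}$, and the hypothesis on $\varphi$ then gives $\cQ\in\mathscr{Q}$. Thus $(\psi^*)^{-1}[\mathscr{R}_{r\cM}]\subset\mathscr{Q}$.

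For the reverse direction, I would assume $\psi$ is quantum expanding and deduce the same for $\varphi$. Take $\cQ$ with $\varphi^*(\cQ)\in\mathscr{R}$, that is $\pi^*(\psi^*(\cQ))\in\mathscr{R}$. Here I invoke that $\pi$ is quantum \emph{expanding}, which says $(\pi^*)^{-1}[\mathscr{R}]\subset\mathscr{R}_{r\cM}$; applied to $\psi^*(\cQ)\in\mathrm{IQRel}(r\cM)$ this forces $\psi^*(\cQ)\in\mathscr{R}_{r\cM}$, and then the hypothesis on $\psi$ gives $\cQ\in\mathscr{Q}$. This completes the equivalence.

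I expect the only delicate point—though it is a light one—to be the contravariant identity $\varphi^*=\pi^*\circ\psi^*$ together with the bookkeeping that $\psi^*(\cQ)$ genuinely lands in $\mathrm{IQRel}(r\cM)$, so that Propositions~\ref{PropSubspaceEmb} and~\ref{PropPIExp} apply to $\pi$ verbatim. Beyond that the argument is purely formal: it never inspects the internal structure of the relations, using only that $\pi$ transports $\mathscr{R}_{r\cM}$ forward into $\mathscr{R}$ (coarseness) and reflects $\mathscr{R}$ back into $\mathscr{R}_{r\cM}$ (expansion). This parallels the proof of Proposition~\ref{PropTakeQuotCoarse}, the difference being that in the backward implication we must \emph{reflect} membership rather than transport it, which is precisely what makes us call on expansion of $\pi$ instead of its coarseness.
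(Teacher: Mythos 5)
Your proof is correct and takes essentially the same approach as the paper's: both factor $\varphi=\psi\circ\pi$ through the quotient onto $r\cM$ and run the two implications off the established properties of $\pi$. The only (cosmetic) difference is in the direction ``$\varphi$ quantum expanding $\Rightarrow$ $\psi$ quantum expanding,'' where the paper recomputes $\varphi^*(\cQ)=\cR_{r\cV r}$ explicitly instead of citing Proposition~\ref{PropSubspaceEmb} together with the identity $\varphi^*=\pi^*\circ\psi^*$ as you do; your modular version is, if anything, slightly cleaner.
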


\begin{proof} 
Let $r$ be a central projection in $\cM$ so that $(1-r)\cM=\ker(\varphi)$,   and identify $\cM/\ker(\varphi)$ with $r\cM$. For simplicity,   let $\mathscr{R}=\mathscr{R}_\mathscr{V}$,   $\mathscr{Q}=\mathscr{R}_\mathscr{U}$, and  $\mathscr{R}'=\mathscr{R}_{{r\cM}}$. Let $\pi:\cM\to r\cM$ be the map given by $\pi(a)=r a$ for all $a\in\cM$, so  $\pi$ is expanding (Proposition \ref{PropPIExp}) and $\varphi=\psi\circ \pi$.

As $\varphi=\psi\circ \pi$, if $\psi$ is quantum expanding, so is $\varphi$. Suppose now that $\varphi$ is quantum expanding. Let $\cR\in \mathscr{R}'$ and pick $\cQ\in \mathrm{IQRel}(\cN)$ with $\psi^*(\cQ)=\cR$. By Theorem \ref{ThmWeaverQRelAndIntQRel}, there is   $\cV\in \mathscr{V}$ so that, thinking of $r\cV r$ as a quantum relation on $r \cM$,  we have $\cR=\cR_{ r\cV r}$. Let $\cV'=r\cV r$ be considered as a subspace of $\cB(H_\cM)$. Then, as $\psi^*(\cQ)=\cR_{ r\cV r}$, we have that
\begin{align*}
  \varphi^*(\cQ)&=\{(p,q): \,  ([(\psi\otimes 1)(r\otimes 1)](p), [(\psi\otimes 1)(r\otimes 1)](q))\in \cQ\}\\
  &=\{(p,q): \,  ( (r\otimes 1)(p), (r\otimes 1)(q))\in \psi^*(\cQ)\}\\
  &=\{(p,q): \,  \exists a\in \cV,\ p(rar\otimes 1)  q\neq 0\}\\
  &=\cR_{\cV'}.
\end{align*} 
As $\varphi$ is quantum expanding and $ \cV'\in \mathscr{V}$, it follows that $\cQ\in \mathscr{Q}$. So, $\psi$ is quantum expanding.
\end{proof}

\begin{definition} \label{DefQuantumCoarseCobounded}
Let $(\cM,\mathscr{R})$   be an intrinsic quantum coarse space and $\cN$ be a von Neumann algebra. A quantum map   $\varphi:\cM\to \cN$ is called \emph{quantum cobounded} if and only if there is $\cR\in \mathscr{R}$ so that for all $p\in \Proj_*(\cM\bar\otimes \cB(\ell_2) )$ there is $q\in \Proj(\cM \bar\otimes \cB(\ell_2)) $ such that   $(p ,(r \otimes 1)q)\in \cR$, where $r$ is the  central projection in $\cM$ so that $\ker(\varphi)=(1-r)\cM$.
\end{definition}

The proof of the next proposition is straightforward,  so we omit it.

\begin{proposition}
Let $X$ be a set, $(Y,\cF)$ be a coarse space and consider  $\ell_\infty(Y)$ endowed with the quantum coarse structure induced by   $\cF$. A map $f:X\to Y$ is cobounded  if and only if $\varphi_f:\ell_\infty(Y)\to \ell_\infty(X)$  is quantum cobounded.\qed
\end{proposition}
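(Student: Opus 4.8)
The plan is to make the two definitions concrete for the pair $(\cM,\cN)=(\ell_\infty(Y),\ell_\infty(X))$ and reduce everything to a statement about single matrix units $e_{zw}$. First I would pin down the central projection $r$. Since $\varphi_f(g)=g\circ f$, its kernel is the ideal of $g\in\ell_\infty(Y)$ vanishing on the image $f[X]$, i.e.\ $(1-\chi_{f[X]})\ell_\infty(Y)$; as $\ell_\infty(Y)$ is abelian, every projection is central and $r=\chi_{f[X]}$. The two facts I would record about $r$ are: for $w\in Y$ one has $\chi_{\{w\}}\leq r$ (equivalently $r\chi_{\{w\}}=\chi_{\{w\}}$) precisely when $w\in f[X]$; and, for a matrix unit, $(e_{zw}\otimes 1)(r\otimes 1)=(e_{zw}r)\otimes 1$, which is nonzero exactly when $w\in f[X]$. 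I would also recall from the proof of Proposition \ref{PropQuantumCoarseIFFCoarse} that the intrinsic relation attached to $F\subset Y^2$ is $\cR_F=\{(p,q): \exists (z,w)\in F,\ p(e_{zw}\otimes1)q\neq 0\}$ and that the intrinsic quantum coarse structure $\mathscr{R}$ induced by $\cF$ is $\{\cR_F : F\in\cF\}$ (closed under sub-relation).

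For the forward implication, suppose $f$ is cobounded, witnessed by $F\in\cF$, and I claim $\cR_F$ witnesses quantum coboundedness. Given $p\in\Proj_*(\ell_\infty(Y)\bar\otimes\cB(\ell_2))$, since $1=\text{wot-}\sum_{z\in Y}\chi_{\{z\}}\otimes 1$ and $p\neq 0$, there is some $z\in Y$ with $p(\chi_{\{z\}}\otimes 1)\neq 0$; fix a vector $\eta=\delta_z\otimes\zeta$ in the range of $\chi_{\{z\}}\otimes 1$ with $p\eta\neq 0$. Coboundedness supplies $x\in X$ with $(z,f(x))\in F$; set $w=f(x)\in f[X]$ and $q=\chi_{\{w\}}\otimes 1$. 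Because $w\in f[X]$ we have $(r\otimes 1)q=q$ and $e_{zw}\chi_{\{w\}}=e_{zw}$, so $p(e_{zw}\otimes 1)(r\otimes 1)q=p(e_{zw}\otimes 1)$; and since $(e_{zw}\otimes 1)(\delta_w\otimes\zeta)=\delta_z\otimes\zeta=\eta$, this equals a nonzero operator when evaluated at $\delta_w\otimes\zeta$. Hence $(p,(r\otimes 1)q)\in\cR_F$, as required.

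For the converse, suppose $\varphi_f$ is quantum cobounded, witnessed by some $\cR_F$ with $F\in\cF$. Given $y\in Y$, I would apply the definition to the nonzero projection $p=\chi_{\{y\}}\otimes 1$, obtaining $q$ and a pair $(z,w)\in F$ with $p(e_{zw}\otimes 1)(r\otimes 1)q\neq 0$. Now $\chi_{\{y\}}e_{zw}\neq 0$ forces $z=y$, while $(e_{zw}r)\neq 0$ forces $w\in f[X]$; writing $w=f(x)$ gives $(y,f(x))=(z,w)\in F$, so $f$ is cobounded via the same $F$. The argument is essentially bookkeeping, and the only point requiring genuine care — which is why I isolated it at the outset — is the correct identification of $r=\chi_{f[X]}$ together with the role of the factor $(r\otimes 1)$, which is exactly the algebraic encoding of the classical requirement that the witnessing point lie in the image $f[X]$; everything else is the routine translation between matrix units and the relation $\cR_F$ already used for Propositions \ref{PropQuantumCoarseIFFCoarse} and \ref{PropQuantumExpIFFExp}.
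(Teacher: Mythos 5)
Your proof is correct. The paper actually omits the argument for this proposition (declaring it ``straightforward''), and what you have written is exactly the intended routine verification, carried out in the same style as the paper's proofs of Propositions \ref{PropQuantumCoarseIFFCoarse} and \ref{PropQuantumExpIFFExp}: you correctly identify $r=\chi_{f[X]}$, reduce membership in $\cR_F$ to the nonvanishing of compressions of matrix units $e_{zw}\otimes 1$ with $(z,w)\in F$, and the two directions then follow by testing against the rank-one projections $\chi_{\{z\}}$ and $\chi_{\{w\}}$ as you do.
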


\begin{proposition}\label{Prop19}
Let $(\cM,\mathscr{V})$ and $(\cN,\mathscr{U})$ be quantum coarse spaces, and $\varphi:\cM\to \cN$ and $\psi:\cN\to \cM$ be quantum functions.
\begin{enumerate}
    \item \label{Item1q} If   $\psi\circ \varphi$ is quantum close to the identity $\cM\to \cM$, then $\varphi$ is quantum cobounded.
\item \label{Item2q} If $\varphi\circ\psi$ is quantum close to the identity $\cN\to \cN$ and $\psi$ is quantum coarse, then $\varphi$ is quantum expanding.
\end{enumerate}
\end{proposition}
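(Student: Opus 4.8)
The plan is to treat the two parts separately; each reduces to exhibiting one intrinsic quantum relation in the ambient coarse structure and then invoking the closure axioms.

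\textbf{Part (1).} Let $\cR_0\in\mathscr{R}$ witness that $\psi\circ\varphi$ is quantum close to $\mathrm{id}_\cM$, so that $(a,b)\in\cR_0$ whenever $((\psi\circ\varphi)\otimes 1)(a)\,b\neq 0$, and let $r\in\cM$ be the central projection with $\ker\varphi=(1-r)\cM$. The only computation I need is $(\psi\circ\varphi)(r)=1_\cM$: since $\varphi$ is unital, $\varphi(r)=\varphi(1)-\varphi(1-r)=1_\cN$, and applying the unital map $\psi$ gives $1_\cM$. Hence $((\psi\circ\varphi)\otimes 1)(r\otimes 1)=1$, so for every $p\in\Proj_*(\cM\bar\otimes\cB(\ell_2))$ we have $((\psi\circ\varphi)\otimes 1)(r\otimes 1)\,p=p\neq 0$, which forces $(r\otimes 1,p)\in\cR_0$, i.e.\ $(p,r\otimes 1)\in\cR_0^{-1}$. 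Setting $\cR:=\cR_0^{-1}\in\mathscr{R}$ (Definition \ref{DefiQuantumCoarseSpace}\eqref{Item3Def}) and $q:=1$, the pair $(p,(r\otimes 1)q)=(p,r\otimes 1)$ lies in $\cR$ for every nonzero $p$, which is exactly quantum coboundedness. I anticipate no real difficulty here.

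\textbf{Part (2).} Write $\theta:=\varphi\circ\psi\colon\cN\to\cN$, and recall that, since $\varphi^*=(\varphi\otimes 1)^{-1}$, pullbacks compose contravariantly: $\psi^*\circ\varphi^*=(\varphi\circ\psi)^*=\theta^*$. Let $\cQ\in\mathrm{IQRel}(\cN)$ satisfy $\varphi^*(\cQ)\in\mathscr{R}$; the goal is $\cQ\in\mathscr{Q}$. Because $\psi$ is quantum coarse, $\theta^*(\cQ)=\psi^*(\varphi^*(\cQ))\in\mathscr{Q}$. Let $\cQ_0\in\mathscr{Q}$ witness that $\theta$ is quantum close to $\mathrm{id}_\cN$. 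The crux is the inclusion
\[\cQ\ \subseteq\ \cQ_0^{-1}\circ\theta^*(\cQ)\circ\cQ_0,\]
which parallels the classical containment $E\subseteq D_0^{-1}\circ(\theta\times\theta)(E)\circ D_0$ used to show that a coarse $g$ together with $g\circ f\sim\mathrm{id}$ makes $f$ expanding. Granting this, the right-hand side lies in $\mathscr{Q}$ by closure under inverse and composition (Definition \ref{DefiQuantumCoarseSpace}\eqref{Item3Def},\eqref{Item5Def}), and then $\cQ\in\mathscr{Q}$ follows from downward closure \eqref{Item2Def}.

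To prove the inclusion I would combine the explicit composition formula of Proposition \ref{PropComposition} with monotonicity of intrinsic quantum relations (Definition \ref{DefiQuantumRel}\eqref{ItemDefiQuantumRel2}) and the contrapositive of closeness, $(a,b)\notin\cQ_0\Rightarrow(\theta\otimes 1)(a)\,b=0$. Fix $(p,q)\in\cQ$ and associate the triple composition as $(\cQ_0^{-1}\circ\theta^*(\cQ))\circ\cQ_0$; by Proposition \ref{PropComposition} it suffices to show, for each $r$, that either $(r^\perp,q)\in\cQ_0$ or $(p,r)\in\cQ_0^{-1}\circ\theta^*(\cQ)$. If $(r^\perp,q)\notin\cQ_0$, then $(\theta\otimes1)(r^\perp)\,q=0$, whence $q\leq(\theta\otimes1)(r)$ (using that $\theta\otimes1$ preserves complements); and for each $s$, if $(p,s)\notin\cQ_0^{-1}$, i.e.\ $(s,p)\notin\cQ_0$, then likewise $p\leq(\theta\otimes1)(s^\perp)$. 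Applying monotonicity to $(p,q)\in\cQ$ with $p\leq(\theta\otimes1)(s^\perp)$ and $q\leq(\theta\otimes1)(r)$ gives $\big((\theta\otimes1)(s^\perp),(\theta\otimes1)(r)\big)\in\cQ$, that is, $(s^\perp,r)\in\theta^*(\cQ)$; a second use of Proposition \ref{PropComposition} then yields $(p,r)\in\cQ_0^{-1}\circ\theta^*(\cQ)$.

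The main obstacle is exactly this inclusion: it is the one place where composition, pullback, and closeness must all be executed at the intrinsic level, and it demands care in tracking complements of projections and in unwinding the nested universal quantifiers produced by Proposition \ref{PropComposition}. Once the inclusion is in hand, the rest of Part (2) — as with all of Part (1) — is routine bookkeeping with the coarse-structure axioms.
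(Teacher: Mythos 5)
Your proof is correct. Part (2) is essentially the paper's own argument: the paper fixes $\cQ'$ with $\varphi^*(\cQ')=\cR\in\mathscr{R}$ and proves the sandwich inclusion $\cQ'\subset\cQ_0\circ\psi^*(\cR)\circ\cQ_0$ (note that $\psi^*(\cR)=\theta^*(\cQ')$ in your notation) by exactly the mechanism you describe --- the contrapositive of closeness to pass to complements of projections, the join/monotonicity axiom applied to $(p,q)\in\cQ'$, and the universal-quantifier description of composition from Proposition \ref{PropComposition}; your variant with $\cQ_0^{-1}$ in one slot is, if anything, more careful about which coordinate $\theta\otimes 1$ acts on in the closeness condition, and the discrepancy is immaterial since $\mathscr{Q}$ is closed under inverses. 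Part (1), by contrast, is genuinely different from and shorter than the paper's proof. There, for each nonzero $p$ the paper manufactures auxiliary projections $q_0\in\Proj(\cN\bar\otimes\cB(\ell_2))$ and $p_0\in\Proj(\cM\bar\otimes\cB(\ell_2))$ as complements of joins of annihilating projections, chases $p\leq(\psi\otimes 1)(q_0)\leq((\psi\circ\varphi)\otimes 1)((r\otimes 1)p_0)$, and exhibits $p_0$ as the required witness. You instead observe that $\varphi(r)=1_\cN$ forces $(\psi\circ\varphi)(r)=1_\cM$, so $((\psi\circ\varphi)\otimes 1)(r\otimes 1)p=p\neq 0$ for every nonzero $p$, and read off $(p,r\otimes 1)\in\cR_0^{-1}$ with the uniform choice $q=1$. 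This is legitimate: since $r\otimes 1$ is central, each $(r\otimes 1)q$ is a subprojection of $r\otimes 1$, so by the join axiom for intrinsic quantum relations the existential ``there is $q$ with $(p,(r\otimes 1)q)\in\cR$'' is equivalent to the single condition $(p,r\otimes 1)\in\cR$; your argument therefore proves exactly the definition, in one line, while the paper's construction yields a (possibly smaller) witness $p_0$ that the statement does not require.
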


\begin{proof}
Let $\mathscr{R}=\mathscr{R}_\mathscr{V}$ and $\mathscr{Q}=\mathscr{R}_\mathscr{U}$. 

\eqref{Item1q}   Fix   $\cR\in \mathscr{R} $ witnessing that $\psi\circ \varphi$ is quantum close to the identity $\cM\to \cM$. Let  $r\in \cM$ be the central projection so that $\ker(\varphi)=(1-r)\cM$ and fix a nonzero projection $p\in \cM\bar\otimes\cB(\ell_2)$. Let \[q_0=1_{H_{\cN}\otimes \ell_2}-\bigvee\{q'\in  \Proj(\cN\bar\otimes\cB(\ell_2) ): \,  p[(\psi\otimes 1)(q')]=0\};\]   so $q_0 \in  \cN\bar\otimes\cB(\ell_2)$ and $p\leq (\psi\otimes 1)(q_0 )$.   Similarly,   let \[p_0=1_{H_{\cM}\otimes \ell_2}-\bigvee\{p'\in \Proj( \cM\bar\otimes\cB(\ell_2)): \,  q_0  [(\varphi\otimes 1)(p')]=0\}.\] So  $ p_0 \in \cM\bar\otimes\cB(\ell_2)$ and $q_0 \leq (\varphi\otimes 1)(p_0)=[(\varphi\otimes 1)(r\otimes 1)](p_0)$. Therefore, we have that  $(\psi\otimes 1)(q_0 )\leq [(\psi\otimes 1)(\varphi\otimes 1) (r\otimes 1)](p_0)$ and, as  $p\leq (\psi\otimes 1)(q_0 )$, we conclude that  \[p[(\psi\otimes 1)(\varphi\otimes 1)(r\otimes 1)]p_0\neq 0.\] By our choice of $\cR$,  it must follow that $(p ,(r\otimes 1)p_0)\in \cR$. So $\varphi$ is cobounded.  

\eqref{Item2q}   Fix   $\cQ\in \mathscr{Q}$ witnessing that $  \varphi\circ\psi$ is quantum close to the identity $\cN\to \cN$. Let  $\cR\in \mathscr{R} $ and pick  $\cQ'\in (\varphi^*)^{-1}[\{\cR\}]$. Let us show that $\cQ'\subset  \cQ\circ\psi^*(\cR) \circ \cQ$. For that, pick $(p,q)\in \cQ'$ and let $r,s\in \Proj(\cM\bar\otimes \cB(\ell_2))$ be so that $(p,r )\not\in \cQ$ and $(s ,q)\not\in \cQ$. Let us show that $(r^\perp ,s^\perp )\in \psi^*(\cR)$.  Indeed, by our choice of $\cQ$, the assumptions on $r$ and $s$ imply that $p(\varphi \otimes 1)(\psi \otimes 1)(r)=0$ and $q(\varphi  \otimes 1)(\psi  \otimes 1)s=0$. Therefore,   as $(p,q)\in \cQ'$, we must have that  \[\big((\varphi\otimes 1)(\psi\otimes 1)(r^\perp) ,(\varphi\otimes 1)(\psi\otimes 1)(s^\perp)\big)\in \cQ'.\] As $\varphi^*(\cQ')=\cR$, this   implies that $((\psi\otimes 1)(r^\perp ),(\psi\otimes 1)(s^\perp ))\in \cR$; which in turn implies that  $(r^\perp , s^\perp )\in \psi^*(\cR)$. The arbitrariness of $r$ and $s$ shows that $(p,q)\in \cQ\circ \psi^*(\cR)\circ \cQ$. Therefore, as $\psi$ is quantum coarse, and as quantum coarse structures are closed under subrelations, this implies that $\cQ'\in \mathscr{Q}$. So  $\varphi$ is quantum expanding.
\end{proof}

We can now conclude that quantum coarse equivalence implies coboundedness and expansion, as promised.

\begin{theorem}\label{ThmEmbImpliesExpCob}
Every quantum coarse embedding is quantum expanding and every quantum coarse equivalence  is   quantum expanding and  quantum cobounded.
\end{theorem}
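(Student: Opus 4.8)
The plan is to read the theorem as a bookkeeping consequence of Proposition \ref{Prop19} together with Proposition \ref{PropSubspaceEquiv}; all of the substantive work has already been done, and what remains is to match hypotheses. I would treat the equivalence case first and then bootstrap to the embedding case.

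For the equivalence case, let $\varphi:\cM\to\cN$ be a quantum coarse equivalence and let $\psi:\cN\to\cM$ be a quantum coarse inverse as furnished by Definition \ref{DefiQuantumCoarseEquivalence}. By that definition $\psi$ is a quantum coarse map, $\psi\circ\varphi$ is quantum close to the identity on $\cM$, and $\varphi\circ\psi$ is quantum close to the identity on $\cN$. Since $\psi\circ\varphi$ is quantum close to $\mathrm{Id}_\cM$, Proposition \ref{Prop19}\eqref{Item1q} gives at once that $\varphi$ is quantum cobounded. Since $\varphi\circ\psi$ is quantum close to $\mathrm{Id}_\cN$ and $\psi$ is quantum coarse, Proposition \ref{Prop19}\eqref{Item2q} gives that $\varphi$ is quantum expanding. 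This settles the second assertion of the theorem with no further computation.

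For the embedding case, let $\varphi:\cM\to\cN$ be a quantum coarse embedding and let $\psi:\cM/\ker(\varphi)\to\cN$ be the induced map, which by Definition \ref{DefiQuantumCoarseEmbedding} is a quantum coarse equivalence. Applying the equivalence case just established to $\psi$, I conclude that $\psi$ is quantum expanding. Proposition \ref{PropSubspaceEquiv} asserts that $\varphi$ is quantum expanding if and only if the induced map $\psi$ is, so $\varphi$ is quantum expanding, which is the first assertion of the theorem.

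I do not expect any genuine obstacle at this stage: the real content lives in Proposition \ref{Prop19}\eqref{Item2q} (where the composition of intrinsic quantum relations and its interaction with the pullback $\varphi^{*}$ had to be controlled) and in Proposition \ref{PropSubspaceEquiv} (where quantum expansion was shown to be insensitive to factoring through $\cM/\ker(\varphi)$). The only points demanding a moment of care are confirming that the quantum coarse inverse $\psi$ is itself quantum coarse, so that the hypothesis of Proposition \ref{Prop19}\eqref{Item2q} is met --- which is built directly into Definition \ref{DefiQuantumCoarseEquivalence} --- and, in the embedding case, verifying that the induced map to which Proposition \ref{PropSubspaceEquiv} is applied is exactly the $\psi:\cM/\ker(\varphi)\to\cN$ appearing in Definition \ref{DefiQuantumCoarseEmbedding}.
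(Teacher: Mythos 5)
Your proposal is correct and follows essentially the same route as the paper: the equivalence case is read off from Proposition \ref{Prop19} (parts \eqref{Item1q} and \eqref{Item2q} applied to the quantum coarse inverse), and the embedding case applies this to the induced map $\psi:\cM/\ker(\varphi)\to\cN$ and then transfers quantum expansion back to $\varphi$ via Proposition \ref{PropSubspaceEquiv}. The only difference is that you spell out which part of Proposition \ref{Prop19} yields which conclusion, where the paper simply says the equivalence case ``follows straightforwardly.''
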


\begin{proof}
If $\varphi:\cM\to \cN$ is a quantum coarse equivalence, then the result follows straightforwardly from Proposition \ref{Prop19}. If $\varphi$ is a quantum coarse embedding, then, by definition, $\psi:\cM/\ker(\varphi)\to \cN$ is a quantum coarse equivalence and therefore, by Proposition \ref{Prop19}, it is quantum expanding. By Proposition \ref{PropSubspaceEquiv}, $\varphi$ is also quantum  expanding. 
\end{proof}

 \begin{remark}[On the backwards direction of Theorem \ref{ThmEmbImpliesExpCob}]
   In the beginning of this subsection, we mentioned that the reader should have no hope that a quantum coarse map $\cM\to \cN$ which is also quantum expanding and quantum cobounded should be a quantum coarse equivalence. Indeed, this can be easily seen since the inclusion $\C\to \cB(\ell_2)$ is a quantum function and, endowing $\C$ and $\cB(\ell_2)$ with their maximal intrinsic quantum coarse structures, i.e., $\mathscr{R}_\C=\mathrm{IQRel}(\C) $ and  $\mathscr{R}_{\cB(\ell_2)}=\mathrm{IQRel}(\cB(\ell_2)) $, it is clear that the inclusion $\C\to \cB(\ell_2)$ becomes quantum coarse, quantum expanding, and quantum cobounded. However, there is no quantum function $\cB(\ell_2)\to \C$.
   
   We can however say that every \emph{surjective} quantum coarse and  quantum expanding map is a quantum embedding,   and that every \emph{bijective} quantum coarse and quantum expanding map is a quantum equivalence. Indeed, say  $\varphi:\cM\to \cN$ is a surjective quantum coarse and quantum expanding map, then the induced map $\psi:\cM/\ker(\varphi)\to \cN$ is an isomorphism which, by Propositions \ref{PropTakeQuotCoarse} and  \ref{PropSubspaceEquiv}, is also quantum coarse and quantum expanding. Therefore, $\psi^{-1}$ must be quantum coarse, so   $\psi$ is a quantum coarse isomorphism, i.e., $\varphi$ is a quantum coarse embedding. \label{RemarkBack}
 \end{remark}

 \begin{theorem}\label{ThmEmbAndIsoHERE}
 Let $(\cM,\mathscr{V})$ and $(\cN,\mathscr{U})$ be quantum coarse spaces and let $\varphi:\cM\to \cN$ be a  surjective quantum function which is   quantum coarse and quantum expanding.
 \begin{enumerate}
     \item\label{Item1111111} If $\varphi$ is spatially implemented, then $\cstu(\cN,\mathscr{U})$ embeds into a hereditary subalgebra of  $\cstu(\cM,\mathscr{V})$.
     \item\label{Item11111112} There is a Hilbert space $K$ with $\mathrm{dens}(K)\leq \{\aleph_0,\mathrm{dens}(H_\cN)\}$ so that $\cstu(\cN,\mathscr{U})$ embeds into a hereditary subalgebra of  $\cstu(\cM\otimes 1_K,\mathscr{V} \bar\otimes \cB(K))$.
 \end{enumerate}
\end{theorem}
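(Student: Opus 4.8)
The plan is to reduce everything to the machinery already developed for quantum coarse isomorphisms by passing to the quotient $\cM/\ker(\varphi)$. Write $\ker(\varphi)=(1-r)\cM$ for the central projection $r\in\cM$, and let $\psi\colon\cM/\ker(\varphi)\cong r\cM\to\cN$ be the isomorphism induced by $\varphi$. Since $\varphi$ is surjective, quantum coarse, and quantum expanding, Propositions \ref{PropTakeQuotCoarse} and \ref{PropSubspaceEquiv} show that $\psi$ is both quantum coarse and quantum expanding, and hence—being bijective—a quantum coarse isomorphism between $(r\cM,\mathscr{V}_{r\cM})$ and $(\cN,\mathscr{U})$; this is exactly the observation recorded in Remark \ref{RemarkBack}. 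I would also record once and for all the identity
\[\cstu(r\cM,\mathscr{V}_{r\cM})=r\,\cstu(\cM,\mathscr{V})\,r,\]
which holds because compression by the fixed projection $r$ is norm-continuous and $r\cV r=\cV\cap r\cB(H_\cM)r$; since $r$ is central in $\cM$ we have $r\in\cM'\subset\cstu(\cM,\mathscr{V})$, so this corner is a hereditary subalgebra of $\cstu(\cM,\mathscr{V})$.

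For part (\ref{Item1111111}), the hypothesis that $\varphi$ is spatially implemented supplies a surjective isometry $u\colon H_\cN\to rH_\cM$ with $\varphi(a)=u^{*}au$ (exactly as in the proof of Theorem \ref{ThmEmbAndIsoF}), and then $\psi$ itself is spatially implemented by the unitary $u$. Applying Theorem \ref{ThmEmbAndIso}(\ref{ThmEmbAndIso1}) to the spatially implemented quantum coarse isomorphism $\psi$ yields a spatial isomorphism $\cstu(r\cM,\mathscr{V}_{r\cM})\simeq\cstu(\cN,\mathscr{U})$, implemented by $b\mapsto ubu^{*}$. Composing with the identity above, $b\mapsto ubu^{*}$ is a $*$-isomorphism of $\cstu(\cN,\mathscr{U})$ onto the hereditary subalgebra $r\,\cstu(\cM,\mathscr{V})\,r$ of $\cstu(\cM,\mathscr{V})$, which is the assertion.

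For part (\ref{Item11111112}), $\psi$ need not be spatial, so the task is to realize the representation of $\cN$ on $H_\cN$ as a spatial reduction of an amplification of the representation of $r\cM$ on $rH_\cM$. By the structure theory of normal representations of von Neumann algebras (\cite[Theorem IV.5.5]{Takesaki2002BookI}), there is a Hilbert space $K$ with $\mathrm{dens}(K)\leq\max\{\aleph_0,\mathrm{dens}(H_\cN)\}$, a projection $e\in\big((r\cM)\otimes 1_K\big)'$ with full central support, and a surjective isometry $w\colon H_\cN\to e(rH_\cM\otimes K)$ implementing $\psi$ into the amplification. Because $\psi$ is a quantum coarse isomorphism (so $\mathscr{U}$ and $\mathscr{V}_{r\cM}$ are the same intrinsic quantum coarse structure in two representations) and because the faithfulness of $\psi$ makes $e$ a full-central-support reduction, the change-of-representation discussion of Sections \ref{diffrep} and \ref{subsectioniqura} makes $b\mapsto wbw^{*}$ a spatial isomorphism of $\cstu(\cN,\mathscr{U})$ onto $e\,\cstu\big((r\cM)\otimes 1_K,\ \mathscr{V}_{r\cM}\bar\otimes\cB(K)\big)\,e$. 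Finally I would identify $(r\cM)\otimes 1_K=(r\otimes 1_K)(\cM\otimes 1_K)$ as the central reduction of $\cM\otimes 1_K$ and check, just as in part (\ref{Item1111111}), that $\mathscr{V}_{r\cM}\bar\otimes\cB(K)=(\mathscr{V}\bar\otimes\cB(K))_{(r\cM)\otimes 1_K}$, so that
\[\cstu\big((r\cM)\otimes 1_K,\ \mathscr{V}_{r\cM}\bar\otimes\cB(K)\big)=(r\otimes 1_K)\,\cstu\big(\cM\otimes 1_K,\ \mathscr{V}\bar\otimes\cB(K)\big)\,(r\otimes 1_K).\]
Since $e\leq r\otimes 1_K$ and $r\otimes 1_K$ is central, $e$ commutes with $\cM\otimes 1_K$, whence $e\in(\cM\otimes 1_K)'\subset\cstu(\cM\otimes 1_K,\mathscr{V}\bar\otimes\cB(K))$; therefore the image $e\,\cstu(\cM\otimes 1_K,\mathscr{V}\bar\otimes\cB(K))\,e$ is a hereditary subalgebra, completing the proof.

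The main obstacle is the one-sided amplification in part (\ref{Item11111112}): unlike Theorem \ref{ThmEmbAndIso}(\ref{ThmEmbAndIso2}), where both algebras are amplified to force a genuine spatial isomorphism, here I must fit the representation of $\cN$ as a reduction of an amplification of the $r\cM$-representation while keeping the amplifying space $K$ controlled by $\mathrm{dens}(H_\cN)$ alone. This requires the multiplicity of the $\cN$-representation to be dominated by that of $(r\cM)\otimes 1_K$, which is precisely why an embedding onto a corner (rather than an isomorphism) is the natural conclusion and why $K$ can be taken no larger than $\max\{\aleph_0,\mathrm{dens}(H_\cN)\}$. The remaining bookkeeping is to verify that the corner projection $e$ lies below $r\otimes 1_K$ and inside the amplified quantum uniform Roe algebra, so that the resulting corner is genuinely hereditary.
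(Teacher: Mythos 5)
Your proposal is correct and follows essentially the same route as the paper: pass to the quotient $r\cM$, invoke Remark \ref{RemarkBack} to get a quantum coarse isomorphism $\psi$, apply Theorem \ref{ThmEmbAndIso}, and identify the image with the corner $r\,\cstu(\cM,\mathscr{V})\,r$ (resp.\ the corner by the reducing projection $e\le r\otimes 1_K$ after a one-sided amplification). Your part (2) merely makes explicit what the paper compresses into the phrase ``reading the dimension estimate out of the proof of \cite[Theorem IV.5.5]{Takesaki2002BookI}'', namely the realization of the $\cN$-representation as a full-central-support reduction of an amplification of size at most $\max\{\aleph_0,\mathrm{dens}(H_\cN)\}$, together with the observation that $e\in(\cM\otimes 1_K)'$ makes the resulting corner hereditary.
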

 
\begin{proof}
\eqref{Item1111111} Let $r\in \cM$ be a central projection with $\ker(\varphi)=(1-r)\cM$. By Remark \ref{RemarkBack}, the induced map $\psi:r\cM\to \cN$ is a spatially implemented quantum coarse isomorphism. Hence, by Theorem \ref{ThmEmbAndIso}(1),   $\cstu(\cN ,\mathscr{U})$ is spatially isomorphic to  $\cstu(r\cM ,r\mathscr{V}r) \simeq r\cstu(\cM ,\mathscr{V})r$.

\eqref{Item11111112} This item follows analogously, using Theorem \ref{ThmEmbAndIso}(2) and again reading the dimension estimate out of the proof of (\cite[Theorem IV.5.5 and Corollary IV.5.6]{Takesaki2002BookI}).
\end{proof}

\subsection{The metric case and quantum moduli}\label{SectionMetricModuli}
For metric spaces $(X,d)$ and $(Y,\partial)$, coarse maps are often defined in terms of the modulus of uniform continuity. Recall, if $f:X\to Y$, then its \emph{modulus of uniform continuity} is given by 
\[\omega_f(t)=\sup\{\partial(f(x),f(y)): \,  d(x,y)\leq t\}, \text{ for all } t\geq 0.\]
One can easily see that $f$ is coarse if and only if $\omega_f(t)<\infty$ for all $t\geq 0$.  Equivalently, if 
\[\tilde{\omega}_f(t)=\inf\{d(x,y): \,  d(f(x),f(y))\geq t\},\]
then $f$ is coarse if and only if $\lim_{t\to \infty}\tilde\omega(t)=\infty$ (see \cite[Lemma 3.1]{ChavezDominguezSwift2020}). We now see that, for quantum coarse metric spaces, our definition of quantum coarseness has an analogous characterization in terms of a modulus. 

We start recalling the definition of distance between projections introduced in 
\cite[Definition 2.6]{KuperbergWeaver2012MemAMS}:

\begin{definition} \label{Vdistance}
Let $(\cM,\mathbf{V}=(\cV_t)_{t\geq 0})$ be a quantum metric space and let $p,q\in\Proj(\cM\bar\otimes \cB(\ell_2))$. The \emph{$\mathbf{V}$-distance between $p$ and $q$} is defined by 
\[d_{\mathbf{V}}(p,q)=\inf\{t \in[0,\infty): \,  \exists a\in \cV_t, \ p(a\otimes 1)q\neq 0\}\]
(here we use the convention that the  infimum of an empty set is $\infty$).
\end{definition}

Our notion of quantum coarseness can be expressed as follows for the metric case (see Remark \ref{RemarkCDS}):

\begin{proposition}\label{PropModulusUniformCont}
Let $(\cM,\mathbf{V})$ and $(\cN,\mathbf{U})$ be quantum metric spaces and consider $\cM$ and $\cN$ as quantum coarse spaces endowed with $\mathscr{V}= \mathscr{V}_\mathbf{V}$ and $\mathscr{U}= \mathscr{V}_\mathbf{U}$, respectively. The following are equivalent for a quantum function $\varphi:\cM \to \cN$: 
\begin{enumerate}
    \item\label{ItemModulusUniformCont1} The map $\varphi$ is quantum coarse.
    \item\label{ItemModulusUniformCont2} There is $\omega:[0,\infty)\to [0,\infty)$ with $\lim_{t\to\infty}\omega(t)=\infty$ such that
    \[\omega(d_{\mathbf{V}}(p,q))\leq d_\mathbf{U}\big((\varphi\otimes 1)(p),(\varphi\otimes 1 )(q)\big)\]
     for all $p,q\in\Proj(\cM\bar\otimes \cB(\ell_2))$.
    \item\label{ItemModulusUniformCont3} We have  $\lim_{t\to \infty}\tilde{\omega}_\varphi(t)=\infty$, where  
    \[\tilde{\omega}_\varphi(t)=\inf\{d_\mathbf{U}((\varphi\otimes 1)(p),(\varphi\otimes 1)(q)): \,  d_\mathbf{V}(p ,q )\geq t\}\]
    and $ p $ and $q$ range over $\Proj(\cM\bar\otimes \cB(\ell_2))$.
\end{enumerate}
\end{proposition}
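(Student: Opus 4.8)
The plan is to reduce all three conditions to the single scale of intrinsic relations $\cR_{\cV_t}$ and $\cR_{\cU_s}$ together with the distances $d_\mathbf{V}, d_\mathbf{U}$, and then to manage the equivalences by careful bookkeeping of strict versus non-strict inequalities. The first step is to set up the dictionary. Since each $\cV_t$ is an operator system, $1\in\cV_{t'}$ for all $t'$, so the axiom $\cV_{t'}\cV_u\subset\cV_{t'+u}$ applied with $t'=t-u$ gives $\cV_u\subset\cV_t$ whenever $u\le t$; hence $t\mapsto\cR_{\cV_t}$ is non-decreasing (and similarly for $\mathbf{U}$). Two clean facts then follow directly from Definition~\ref{Vdistance} and Theorem~\ref{ThmWeaverQRelAndIntQRel}: (A) $d_\mathbf{V}(p,q)<t\Rightarrow(p,q)\in\cR_{\cV_t}$, and (B) $(p,q)\in\cR_{\cV_t}\Rightarrow d_\mathbf{V}(p,q)\le t$ (and symmetrically for $\mathbf{U}$). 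I would also record that, because $\mathscr{V}=\mathscr{V}_\mathbf{V}$ consists of the quantum relations contained in some $\cV_t$, the intrinsic structure $\mathscr{R}=\mathscr{R}_\mathscr{V}$ consists exactly of those $\cR\in\mathrm{IQRel}(\cM)$ with $\cR\subset\cR_{\cV_t}$ for some $t$, and likewise for $\mathscr{Q}$. Finally, since $\varphi^*$ is monotone, the $\cR_{\cU_s}$ are cofinal in $\mathscr{Q}$, and $\mathscr{R}$ is closed under subrelations, quantum coarseness of $\varphi$ is equivalent to the \emph{scale statement}: for every $s$ there is $t$ with $\varphi^*(\cR_{\cU_s})\subset\cR_{\cV_t}$.

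The heart of the proof is the equivalence $\eqref{ItemModulusUniformCont1}\Leftrightarrow\eqref{ItemModulusUniformCont3}$, run through this scale statement. For $\eqref{ItemModulusUniformCont1}\Rightarrow\eqref{ItemModulusUniformCont3}$: given $s$, pick $t$ with $\varphi^*(\cR_{\cU_s})\subset\cR_{\cV_t}$ and set $T=t+1$. For any $p,q$ with $d_\mathbf{V}(p,q)\ge T>t$, fact (B) forces $(p,q)\notin\cR_{\cV_t}$, the inclusion forces $\big((\varphi\otimes1)(p),(\varphi\otimes1)(q)\big)\notin\cR_{\cU_s}$, and fact (A) then yields $d_\mathbf{U}\big((\varphi\otimes1)(p),(\varphi\otimes1)(q)\big)\ge s$; hence $\tilde\omega_\varphi(T)\ge s$, and since $\tilde\omega_\varphi$ is non-decreasing and $s$ is arbitrary this gives $\eqref{ItemModulusUniformCont3}$. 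For $\eqref{ItemModulusUniformCont3}\Rightarrow\eqref{ItemModulusUniformCont1}$: given $s$, use $\eqref{ItemModulusUniformCont3}$ to choose $T$ with $\tilde\omega_\varphi(T)>s$ strictly. Any $(p,q)\notin\cR_{\cV_T}$ has $d_\mathbf{V}(p,q)\ge T$ by fact (A), so $d_\mathbf{U}\big((\varphi\otimes1)(p),(\varphi\otimes1)(q)\big)>s$, and the strict inequality together with fact (B) forces $\big((\varphi\otimes1)(p),(\varphi\otimes1)(q)\big)\notin\cR_{\cU_s}$; this is precisely $\varphi^*(\cR_{\cU_s})\subset\cR_{\cV_T}$.

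The remaining equivalence $\eqref{ItemModulusUniformCont2}\Leftrightarrow\eqref{ItemModulusUniformCont3}$ is routine modulus manipulation. For $\eqref{ItemModulusUniformCont3}\Rightarrow\eqref{ItemModulusUniformCont2}$ I would take $\omega=\tilde\omega_\varphi$: the inequality $\omega(d_\mathbf{V}(p,q))\le d_\mathbf{U}\big((\varphi\otimes1)(p),(\varphi\otimes1)(q)\big)$ holds because the pair $(p,q)$ itself lies in the infimum defining $\tilde\omega_\varphi(d_\mathbf{V}(p,q))$, and $\lim_{t\to\infty}\omega(t)=\infty$ is $\eqref{ItemModulusUniformCont3}$ verbatim. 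For $\eqref{ItemModulusUniformCont2}\Rightarrow\eqref{ItemModulusUniformCont3}$ one estimates $\tilde\omega_\varphi(t)\ge\inf_{u\ge t}\omega(u)$, which tends to $\infty$ precisely because $\lim_{t\to\infty}\omega(t)=\infty$.

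The main obstacle I anticipate is not conceptual but bookkeeping: the axiom $\cV_t=\bigcap_{s>t}\cV_s$ does \emph{not} let one upgrade ``$(p,q)\in\cR_{\cV_s}$ for all $s>t$'' to ``$(p,q)\in\cR_{\cV_t}$'', so the passage between distance and membership is only one-sided at the boundary, giving facts (A) and (B) with mismatched strict and non-strict inequalities. The proof must therefore route each implication through the inequality of the correct strictness, inserting the harmless slack (choosing $T=t+1$ in one direction, demanding $\tilde\omega_\varphi(T)>s$ in the other). A secondary technical point is keeping $\omega$ finite-valued in $\eqref{ItemModulusUniformCont2}$: wherever $\tilde\omega_\varphi$ is infinite there are no pairs $(p,q)$ imposing a constraint (every such pair already has $d_\mathbf{U}=\infty$), so one may freely redefine $\omega$ there to any finite function tending to infinity without disturbing the required inequality.
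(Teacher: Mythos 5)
Your proposal is correct and follows essentially the same route as the paper's proof: both reduce quantum coarseness to the cofinal scale statement ``for every $s$ there is $t$ with $\varphi^*(\cR_{\cU_s})\subset \cR_{\cV_t}$'' and translate the distance conditions into membership in the intrinsic relations $\cR_{\cV_t}$, $\cR_{\cU_s}$. The only cosmetic difference is that the paper absorbs the strict/non-strict boundary mismatch by working with the unions $\bigcup_{s<t}\cR_{\cV_s}$ in a single chain of equivalences (and dismisses $(2)\Leftrightarrow(3)$ as straightforward), whereas you insert explicit slack ($T=t+1$, resp.\ $\tilde\omega_\varphi(T)>s$); both devices are valid.
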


\begin{proof}
 Let $\mathbf{V}=(\cV_t)_{t\geq 0}$,  $\mathbf{U}=(\cU_{t})_{t\geq 0}$, and for each $t\geq 0$, let  $\cR_t=\cR_{\cV_t}$ and $\cQ_{t}=\cR_{\cU_t}$.

 The equivalence \eqref{ItemModulusUniformCont2} $\iff$ \eqref{ItemModulusUniformCont3} is completely straightforward. So we only show that  \eqref{ItemModulusUniformCont1} $\iff$ \eqref{ItemModulusUniformCont3}. For that, notice first that $\varphi $ is coarse if and only if for all $t>0$ there is $t'>0$ so that $\varphi^*(\cQ_t)\subset  \cR_{t'}$. Then, if  $t,t'\geq 0$, notice that, for  $p$ and $q$ ranging over $ \Proj(\cM\bar\otimes \cB(\ell_2))$, we have
 \begin{align*}
 \tilde{\omega}_\varphi(t')\geq t\ & \iff\  [d_{\mathbf{V}}(p ,q )\geq t'\ \Rightarrow \  d_{\mathbf{U}}( (\varphi\otimes 1)(p) ,(\varphi\otimes 1)(q) )\geq t]\\
 & \iff  \left[((\varphi\otimes 1)(p) ,(\varphi\otimes 1)(q) )\in\bigcup_{s<t} \cQ_s\ \Rightarrow \ (p ,q ) \in\bigcup_{s<t'} \cR_t \right]\\
 & \iff  \  \left[(p ,q ) \in\bigcup_{s<t} \varphi^*(\cQ_s)\Rightarrow \ (p ,q ) \in\bigcup_{s<t'}  \cR_s \right] \\
 &\iff \ \bigcup_{s<t}\varphi^*(\cQ_s)\subset  \bigcup_{s<t'} \cR_{s}.
 \end{align*}
As   $\tilde{\omega}_\varphi$ is increasing, we are done.
\end{proof}

\begin{remark}\label{RemarkCDS}
We point out that $\tilde\omega_\varphi$ was introduced in \cite[Definition 3.2]{ChavezDominguezSwift2020} with the small difference that there the projections $p$ and $q$ are only allowed to range over projections in $\cM$. However, those two definitions coincide for operator reflexive quantum metric spaces. Indeed, this can be seen for instance  from the proof of Proposition \ref{PropModulusUniformCont} and the fact that if  $\cV,\cV'\in \mathrm{QRel}(\cM\subset  \cB(H))$ are such that $(p\otimes 1,q\otimes 1)\in \cR_{\cV}$ implies $(p\otimes 1,q\otimes 1)\in \cR_{\cV'}$ for all $p,q\in \Proj(\cM)$, then $\cR_\cV\subset   \cR_{\overline{\cV'}} $.  
\end{remark}

We now turn to quantum expanding functions.   As introduced in  \cite[Definition 2.3]{ChavezDominguezSwift2020}, given $p\in \Proj(\cM$), we define the \emph{diameter of    $p$} as 
\[\diam_{\mathbf{V}}(p)=\sup\{d_\mathbf{V}(r,s): \,  \exists a\in \cB(H)\text{ so that }r(pap\otimes 1)s\neq 0\}.\]
Although we do not have an equivalent definition for quantum expanding maps in terms of a modulus, we can relate them with the following modulus introduced in \cite[Definition 3.2]{ChavezDominguezSwift2020}.

\begin{proposition}\label{PropModulusExp}
Let $(\cM,\mathbf{V})$ and $(\cN,\mathbf{U})$ be quantum metric spaces and consider $\cM$ and $\cN$ as quantum coarse spaces endowed with $\mathscr{V}= \mathscr{V}_\mathbf{V}$ and $\mathscr{U}= \mathscr{V}_\mathbf{U}$, respectively. If  $\varphi:\cM \to \cN$ is quantum expanding, then $\tilde\rho_{\varphi(t)}<\infty$ for all $t\geq 0$, where 
   \[\tilde{\rho}_\varphi(t)=\sup\{\diam_\mathbf{U}(\varphi(p)): \,  p\in \Proj(\cM)\text{ and } \diam_\mathbf{V}(p)\leq t\}.\]
\end{proposition}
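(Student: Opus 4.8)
The plan is to reduce the statement to a single pullback inclusion between intrinsic quantum relations and then feed it into the definition of quantum expanding. Throughout I would write $\mathscr{R}=\mathscr{R}_{\mathscr{V}}$, $\mathscr{Q}=\mathscr{R}_{\mathscr{U}}$, $\cR_t=\cR_{\cV_t}$, $\cQ_t=\cR_{\cU_t}$, and for a projection $p\in\Proj(\cM)$ I would first record that $p\cB(H)p$ is a weak$^*$-closed $\cM'$-bimodule (because $p\in\cM$ commutes with $\cM'$), hence a quantum relation, so that $\cR_{p\cB(H)p}\in\mathrm{IQRel}(\cM)$. By the very definition of diameter, $\diam_{\mathbf{V}}(p)=\sup\{d_{\mathbf{V}}(r,s):(r,s)\in\cR_{p\cB(H)p}\}$, and likewise $\diam_{\mathbf{U}}(\varphi(p))=\sup\{d_{\mathbf{U}}(r',s'):(r',s')\in\cR_{\varphi(p)\cB(K)\varphi(p)}\}$. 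Using $\cV_u\subset\cV_{t'}$ for $u\le t'$ (which follows from $1\in\cV_{t'-u}$ and $\cV_u\cV_{t'-u}\subset\cV_{t'}$), the inequality $\diam_{\mathbf{V}}(p)\le t$ is equivalent to $\cR_{p\cB(H)p}\subset\cR_{t'}$ for every $t'>t$; in particular, for fixed $t'>t$, all these relations sit inside the single member $\cR_{t'}\in\mathscr{R}$.

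The crux is the following inclusion, which I claim holds for every $p\in\Proj(\cM)$:
\[\varphi^*\big(\cR_{\varphi(p)\cB(K)\varphi(p)}\big)\subset\cR_{p\cB(H)p}.\]
Unwinding $\varphi^*=(\varphi\otimes1)^{-1}$, this amounts to showing that whenever $(r,s)\in\Proj(\cM\bar\otimes\cB(\ell_2))^2$ satisfies $r(pap\otimes1)s=0$ for all $a\in\cB(H)$, then $(\varphi\otimes1)(r)\,(\varphi(p)b\varphi(p)\otimes1)\,(\varphi\otimes1)(s)=0$ for all $b\in\cB(K)$. The point is that both corners are squeezed by a projection on each side and $\varphi$ is a unital normal $^*$-homomorphism sending the projection $p$ to the projection $\varphi(p)$; I would prove the inclusion intrinsically, writing $p\cB(H)p=\overline{\Span}^{w^*}\big((p\cB(H))(\cB(H)p)\big)$ so that $\cR_{p\cB(H)p}=\cR_{p\cB(H)}\circ\cR_{\cB(H)p}$ by Corollary~\ref{CorQRelAndIQRel}, and then tracking each factor through $\varphi^*$ via the range-projection axiom of Definition~\ref{DefiQuantumRel} together with the well-definedness of $\varphi^*$ (\cite[Proposition 2.25]{Weaver2012MemoirAMS}). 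As a sanity check, in the classical case $\cM=\ell_\infty(Y)$, $\cN=\ell_\infty(X)$, $\varphi=\varphi_f$ the inclusion reads $\cR_{(f\times f)(f^{-1}(B)\times f^{-1}(B))}\subset\cR_{B\times B}$, which holds because $f(f^{-1}(B))\subset B$. This step — controlling the corner $p\cB(H)p$ under an abstract, possibly non-spatial, homomorphism — is where the real work lies and is the main obstacle.

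With the inclusion in hand, the proof assembles quickly. Fix $t\ge0$ and set
\[\cQ=\bigcup\big\{\cR_{\varphi(p)\cB(K)\varphi(p)}:p\in\Proj(\cM),\ \diam_{\mathbf{V}}(p)\le t\big\}.\]
An arbitrary union of intrinsic quantum relations is again one: openness and the non-triviality and range-projection axioms of Definition~\ref{DefiQuantumRel} pass to unions immediately, and the join axiom does too, since it is an equivalence one checks family-by-family against each member of the union. Because $\varphi^*$ is a preimage it commutes with unions, so by the crux inclusion and the first paragraph,
\[\varphi^*(\cQ)=\bigcup_{p}\varphi^*\big(\cR_{\varphi(p)\cB(K)\varphi(p)}\big)\subset\bigcup_{p}\cR_{p\cB(H)p}\subset\cR_{t'}\in\mathscr{R}\]
for any fixed $t'>t$. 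Thus $\cQ\in(\varphi^*)^{-1}[\mathscr{R}]$, and since $\varphi$ is quantum expanding, $\cQ\in\mathscr{Q}$, i.e.\ $\cQ\subset\cQ_u$ for some $u\ge0$. Finally, for every admissible $p$ and every $(r',s')\in\cR_{\varphi(p)\cB(K)\varphi(p)}\subset\cQ\subset\cQ_u$ we get $d_{\mathbf{U}}(r',s')\le u$, whence $\diam_{\mathbf{U}}(\varphi(p))\le u$. Taking the supremum over such $p$ yields $\tilde\rho_\varphi(t)\le u<\infty$, as required.
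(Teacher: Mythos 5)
Your argument follows essentially the same route as the paper's: both proofs introduce the corner relations $\cR(p)=\cR_{p\cB(H)p}$ and $\cQ(p)=\cR_{\varphi(p)\cB(K)\varphi(p)}$, build a single auxiliary intrinsic quantum relation on $\cN$ by taking a union, push it through the definition of quantum expanding to land in some $\cQ_u$, and read off $\tilde\rho_\varphi(t)\leq u$. Your packaging of the auxiliary relation is slightly more direct (you union the $\cQ(p)$ over admissible $p$ and use closure of $\mathscr{R}$ under subrelations, whereas the paper unions all $\cQ'$ with $\varphi^*(\cQ')=\cR_t$ exactly and then shows each $\cQ(p)$ is absorbed into that union); both variants are fine, and your use of $t'>t$ sidesteps a small technicality about whether $\cR_{\cV_t}=\bigcap_{s>t}\cR_{\cV_s}$.

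The one place where your proof is not complete is exactly the step you flag as the main obstacle: the inclusion $\varphi^*\bigl(\cR_{\varphi(p)\cB(K)\varphi(p)}\bigr)\subset\cR_{p\cB(H)p}$. This is a true statement, and it is precisely \cite[Lemma 2.6]{ChavezDominguezSwift2020}, which the paper cites rather than reproves; so the gap coincides with a citable result. Your sketched reduction does make progress but does not close it: factoring $p\cB(H)p$ as $\overline{\Span}^{w^*}\bigl((p\cB(H))(\cB(H)p)\bigr)$ and using Corollary~\ref{CorQRelAndIQRel} together with the fact that $\varphi^*(\cR_1\circ\cR_2)\subset\varphi^*(\cR_1)\circ\varphi^*(\cR_2)$ (which holds because the quantifier in Proposition~\ref{PropComposition} ranges over fewer projections after pullback) reduces the problem to showing $\varphi^*(\cR_{\varphi(p)\cB(K)})\subset\cR_{p\cB(H)}$. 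Since $[(\varphi\otimes 1)(x)]=(\varphi\otimes 1)([x])$ for a normal unital $*$-homomorphism, this in turn reduces to showing that the pullback of the \emph{linkability} relation of $\cN\subset\cB(K)$ is contained in the linkability relation of $\cM\subset\cB(H)$ --- i.e., that non-linkable pairs over $\cM$ map to non-linkable pairs over $\cN$. That statement is the actual content of the cited lemma and is not a formal consequence of the axioms in Definition~\ref{DefiQuantumRel}; it requires an intrinsic characterization of linkability (as in \cite[Section 2.3]{KuperbergWeaver2012MemAMS}) or a representation-theoretic argument. So either prove that last step or cite it; everything else in your write-up is sound.
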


\begin{proof}
Let $\mathbf{V}=(\cV_t)_{t\geq 0}$ and $\mathbf{U}=(\cU_{t})_{t\geq 0}$.  For each $t\geq 0$, let  $\cR_t=\cR_{\cV_t}$ and $\cQ_{t}=\cR_{\cU_t}$. Fix $t>0$. Let  \[\cQ^t=\bigcup_{\cQ\in (\varphi^*)^{-1}[\{\cR_t\}]}\cQ,\]   so, as a union of intrinsic quantum relations is an intrinsic quantum relation,  $\cQ^t\in \mathrm{IQRel}(\cN)$. Moreover,  $\varphi^*(\cQ^t)=\cR_t$, so, as $\varphi$ is quantum expanding, $\cQ^t\in\mathscr{Q}$.  Pick $t'>0$ so that $\cQ^t\subset  \cQ_{t'}$. 
 
 Let us show that $\tilde \rho_\varphi(t)\leq t'$.  
For each $p\in \Proj(\cM)$,  let
\[\cR(p)=\bigcup_{a\in \cB(H)}\{(r,s)\in \Proj(\cM\bar\otimes \cB(\ell_2)): \,  r(pap\otimes 1)s\neq 0\}\]
and
\[\cQ(p)=\bigcup_{a\in \cB(H)}\{(r,s)\in \Proj(\cN\bar\otimes \cB(\ell_2)): \,  r(\varphi(p)a\varphi(p)\otimes 1)s\neq 0\}.\]
So $\cR(p)$ and $\cQ(p)$ are intrinsic quantum relations on $\cM$ and $\cN$, respectively, and we have that 
 \begin{align*}
     \tilde{\rho}_{\varphi}(t)\leq t'\ &\iff\ \diam_{\mathbf{V}}(p)\leq t\ \Rightarrow\ \diam_{\mathbf{U}}(\varphi(p))\leq t'\\
     & \iff\ \cR(p)\subset  \cR_t\ \Rightarrow \ \cQ(p)\subset  \cQ_{t'},
     \end{align*}
     where the projections $p$ above range over $\Proj(\cM)$.
 
 Say $\cR(p)\subset  \cR_t$. By   \cite[Lemma 2.6]{ChavezDominguezSwift2020}, $\varphi^*(\cQ(p))\subset  \cR(p)$; so $\varphi^*(\cQ(p))\subset  \cR_t$. Therefore, $\varphi^*(\cQ(p)\cup\cQ^t)= \cR_t$ which, by the definition of $\cQ^t$, implies that $\cQ(p)\subset  \cQ^t$. By our choice of $t'$,  $\cQ(p)\subset  \cQ_{t'}$.
\end{proof}

We do not know if the condition in Proposition \ref{PropModulusExp} characterizes quantum expansion. 

\begin{remark}
We notice that quantum coarse embeddings between quantum \emph{metric} spaces were introduced differently in \cite{ChavezDominguezSwift2020}:  according to \cite[Definition 3.4]{ChavezDominguezSwift2020}, a quantum  function $\varphi:\cM\to \cN$ is a \emph{quantum coarse embedding} if $\lim_{t\to \infty}\tilde\omega_\varphi(t)=\infty$ and $\tilde\rho_\varphi(t)<\infty$. 
\end{remark}

 \subsection{Quantum asymptotic dimension}\label{SubsectionAsyDim}
Asymptotic dimension was introduced by M. Gromov in \cite[Section 1.E]{Gromov1993LondonLecNotes},  and it has since become one of the main concepts in coarse geometry. In this section, we  quantize this notion and show that, just as in the classical setting, quantum asymptotic dimension is preserved under quantum coarse embeddings. In the case of quantum metric spaces, the development below coincides with \cite[Theorem 4.6]{ChavezDominguezSwift2020}.

\begin{definition}
Let $(\cM,\mathscr{R})$ be an intrinsic quantum coarse space. 
\begin{enumerate}
\item Given $\cP\subset  \Proj(\cM)$, we say that $\cP$ \emph{covers $\cM$} if $1_{H_\cM}=\bigvee_{p\in \cP}p$.
    \item Given $\cR\in \mathscr{R}$ and $\cP\subset \Proj(\cM)$, we say that $\cP$  is \emph{$\cR$-disjoint} if $(p\otimes 1,q\otimes 1)\not\in \cR$ for all $p,q\in \cP$ with $p\neq q$.
    \item Given $\cR\in \mathscr{R}$ and $p\in \Proj(\cM)$, we say that the \emph{diameter of $p$ is at most $\cR$}, and we write $\diam(p)\leq \cR$, if $(r,s)\in \cR$ for all $r,s\in \Proj(\cM)$ such that there is $a\in \cB(H_\cM)$ with $r(pap\otimes 1)s\neq 0$.\footnote{This is a clear adaptation of the quantity  $\diam_\mathbf{V}(p)$ introduced in  \cite[Definition 2.3]{ChavezDominguezSwift2020} (and used in Section \ref{SectionMetricModuli}) for the metrizable case.}
    \item We say $\cP\subset  \Proj(\cM)$ is \emph{uniformly bounded} if there is $\cR\in \mathscr{R}$ so that $\diam(p)\leq \cR$ for all $p\in \cP$.
\end{enumerate}
If $(\cM,\mathscr{V})$ is a quantum coarse space, all the definitions above are made with respect to $\mathscr{R}_\mathscr{V}$.
\end{definition}

\begin{definition}
Let $(\cM,\mathscr{V})$ be a  quantum coarse space and $n\in\N\cup\{0\}$. We say that $(\cM,\mathscr{V})$ has \emph{asymptotic dimension at most $n$}  if for all $\cR\in \mathscr{R}_\mathscr{V}$ there are $\cP_0,\cP_1,\ldots, \cP_n\subset  \Proj(\cM)$ so that 
\begin{enumerate}
    \item $(\cP_i)_{i=0}^n$ covers $\cM$,
    \item each $\cP_i$ is $\cR$-separated, and 
    \item each $\cP_i$ is uniformly bounded.
\end{enumerate}
We say that $(\cM,\mathscr{V})$ has \emph{asymptotic dimension at most $n$}, and  write $\mathrm{asydim}(\cM,\mathscr{V})=n$, if $n$ is the smallest element in $\N\cup\{0\}$ satisfying the above. If no such $n$ exists, we say that $(\cM,\mathscr{V})$ has \emph{infinite asymptotic dimension}.   
\end{definition}

It is clear that the definition above coincides with the usual definition of asymptotic dimension of a coarse space $(X,\cE)$ for $\cM=\ell_\infty(X)$ and $\mathscr{V}=\mathscr{V}_{\cE}$.

\begin{theorem}\label{ThmAsyDim}
Let $(\cM,\mathscr{V})$ and $(\cN,\mathscr{U})$ be a  quantum coarse spaces. If there is a quantum coarse and quantum expanding map $\varphi:\cM\to \cN$,    then $\mathrm{asydim}(\cN,\mathscr{U})\leq \mathrm{asydim}(\cM,\mathscr{V})$. In particular, if and $(\cM,\mathscr{V})$ and $(\cN,\mathscr{U})$ are quantum coarsely equivalent, then $\mathrm{asydim}(\cN,\mathscr{U})= \mathrm{asydim}(\cM,\mathscr{V})$. 
\end{theorem}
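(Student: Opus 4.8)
The plan is to prove the inequality $\mathrm{asydim}(\cN,\mathscr{U}) \leq \mathrm{asydim}(\cM,\mathscr{V})$ by transporting a witnessing family of covers from $\cM$ to $\cN$ via the quantum function $\varphi$. Suppose $\mathrm{asydim}(\cM,\mathscr{V}) = n$; I must show that for any $\cQ \in \mathscr{R}_\mathscr{U}$ there are covers $\cP_0',\ldots,\cP_n' \subset \Proj(\cN)$ that are $\cQ$-disjoint and uniformly bounded. Given such a $\cQ$, since $\varphi$ is quantum coarse, $\varphi^*(\cQ) \in \mathscr{R}_\mathscr{V}$, so I can apply the asymptotic dimension hypothesis at the level $\cR = \varphi^*(\cQ)$ to obtain covers $\cP_0,\ldots,\cP_n \subset \Proj(\cM)$ that are $\varphi^*(\cQ)$-disjoint and uniformly bounded. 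The candidate covers in $\cN$ will be the pushforwards $\cP_i' = \{\varphi(p) : p \in \cP_i\}$ (where $\varphi(p)$ is a projection since $\varphi$ is a unital $^*$-homomorphism).

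\emph{First} I would verify the covering property: since $\varphi$ is a unital weak$^*$-continuous $^*$-homomorphism, it preserves suprema of projections, so $\bigvee_{p \in \cP_i} \varphi(p) = \varphi\big(\bigvee_{p \in \cP_i} p\big) = \varphi(1_{H_\cM}) = 1_{H_\cN}$, using that $(\cP_i)_{i=0}^n$ covers $\cM$; hence $(\cP_i')_{i=0}^n$ covers $\cN$. \emph{Next} I would check $\cQ$-disjointness: for $p \neq q$ in $\cP_i$ we have $(p\otimes 1, q\otimes 1) \notin \varphi^*(\cQ)$, which by the definition of $\varphi^*$ means exactly $((\varphi\otimes 1)(p\otimes 1), (\varphi\otimes 1)(q\otimes 1)) = (\varphi(p)\otimes 1, \varphi(q)\otimes 1) \notin \cQ$; thus $\cP_i'$ is $\cQ$-disjoint (noting that distinct $p,q$ yield distinct $\varphi(p),\varphi(q)$ whenever $(\varphi(p)\otimes 1,\varphi(q)\otimes 1)\notin\cQ$, as $\cQ$ contains the diagonal).

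\emph{The main obstacle} is the uniform boundedness of each $\cP_i'$, and this is precisely where I expect to use that $\varphi$ is quantum \emph{expanding}. The classical intuition is that a coarse map can shrink distances, so pushing forward a uniformly bounded family keeps it bounded; but here the diameter condition $\diam(\varphi(p)) \leq \cQ$ involves the corner $\varphi(p)\,a\,\varphi(p)$ for arbitrary $a \in \cB(H_\cN)$, and I must relate this to the corner $p\,b\,p$ in $\cB(H_\cM)$. The key tool is \cite[Lemma 2.6]{ChavezDominguezSwift2020}, already invoked in the proof of Proposition \ref{PropModulusExp}, which gives $\varphi^*(\cQ(p)) \subset \cR(p)$, where $\cR(p)$ and $\cQ(p)$ are the intrinsic quantum relations tracking the corners of $p$ and $\varphi(p)$ respectively. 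Concretely: let $\cR_i \in \mathscr{R}_\mathscr{V}$ witness uniform boundedness of $\cP_i$, so $\cR(p) \subset \cR_i$ for all $p \in \cP_i$. Forming $\cQ^i = \bigcup_{\cQ' \in (\varphi^*)^{-1}[\{\cR_i\}]} \cQ'$, which lies in $\mathscr{U}$ since $\varphi$ is quantum expanding, I would show as in Proposition \ref{PropModulusExp} that $\cR(p) \subset \cR_i$ forces $\cQ(p) \subset \cQ^i$, i.e. $\diam(\varphi(p)) \leq \cQ^i$ uniformly over $p \in \cP_i$. Hence each $\cP_i'$ is uniformly bounded by $\cQ^i$, completing the proof that $\mathrm{asydim}(\cN,\mathscr{U}) \leq n$.

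\emph{Finally}, the ``in particular'' clause follows immediately: if $(\cM,\mathscr{V})$ and $(\cN,\mathscr{U})$ are quantum coarsely equivalent via $\varphi$ with quantum coarse inverse $\psi$, then by Theorem \ref{ThmEmbImpliesExpCob} both $\varphi$ and $\psi$ are quantum coarse and quantum expanding, so applying the first part to $\varphi$ gives $\mathrm{asydim}(\cN,\mathscr{U}) \leq \mathrm{asydim}(\cM,\mathscr{V})$ and applying it to $\psi$ gives the reverse inequality, yielding equality.
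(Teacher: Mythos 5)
Your proposal is correct and follows essentially the same route as the paper's proof: push the $\varphi^*(\cQ)$-disjoint uniformly bounded cover forward along $\varphi$, observe that disjointness transfers directly from the definition of $\varphi^*$, and obtain uniform boundedness of the image via the quantum expanding hypothesis using the same construction $\cQ^i=\bigcup_{\cQ'\in(\varphi^*)^{-1}[\{\cR_i\}]}\cQ'$ and the argument of Proposition \ref{PropModulusExp}. The only difference is that you spell out the covering property and the ``in particular'' clause explicitly, which the paper leaves implicit.
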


\begin{proof}
Let $\varphi:\cM\to \cN$ be a quantum coarse and quantum expanding  map. First notice that if  $\cQ\in \mathscr{R}_\mathscr{U}$ and $\cP\subset  \Proj(\cM)$ is $\varphi^*(\cQ)$-disjoint, then  $\varphi[\cP]$ is $\cQ$-disjoint.   Now let $\cR\in \mathscr{R}$ and let $\cQ$ be the union of all   $\cQ'\in \mathrm{IQRel}(\cN)$ so that $\varphi^* (\cQ')\subset  \cR$. As $\varphi$ is quantum expanding, we have that $\cQ\in \mathscr{R}_{\mathscr{U}}$. Proceeding analogously as in the proof of Proposition \ref{PropModulusExp}, we have that if $\diam (p)\leq \cR$, then $\diam(\varphi(p))\leq \cQ$.

By the above, if $\cQ\in \mathscr{R}_{\mathscr{U}}$ and   $\cP_0,\cP_1,\ldots, \cP_n\subset  \Proj(\cM)$ forms a $\varphi^*(\cQ)$-disjoint, uniformly bounded cover of $\cM$, then $(\varphi[\cP_i])_{i=0}^n$ is a $\cQ$-disjoint, uniformly bounded cover of $\cN$. 
\end{proof}

\begin{acknowledgements}
The authors would like to thank Nik Weaver for useful comments on a previous version of this paper. The first-named author would also like to thank Ben Hayes for enlightening  conversations about von Neumann algebras.
\end{acknowledgements}
   
\bibliographystyle{alpha}
\bibliography{bibliography}

\end{document}